\theoremstyle{plain}
\newtheorem{thm}{Theorem}
\newtheorem{lem}[thm]{Lemma}
\newtheorem{prop}[thm]{Proposition}
\newtheorem{quest}{Question}
\newtheorem{rem}{Remark}
\newtheorem{thmm}{Theorem}
\numberwithin{equation}{section}
\begin{document}

\title{On composition of Baire functions}
\author{Olena Karlova}
\author{Volodymyr Mykhaylyuk}

\address{Department of Mathematical Analysis, Faculty of Mathematics and Informatics, Yurii Fedkovych Chernivtsi National University, Kotsyubyns'koho str., 2, Chernivtsi, Ukraine}

\begin{abstract}
 We study the maps between topological spaces whose composition with Baire class $\alpha$ maps also belongs to the  $\alpha$'th  Baire class  and give characterizations of such maps.
\end{abstract}



\maketitle

\section{Introduction}

In this paper we study right and left Baire compositors and show that under some restrictions on the domain and the range the property of being the right Baire-one compositor is equivalent to many other function properties such as piecewise continuity, $G_\delta$-measurability, ${\rm B}_1$-stability, while left compositors are exactly continuous maps.

By definition, for an ordinal $\alpha\in[0,\omega_1)$ a map $f:X\to Y$ between topological spaces is {\it the right (left) ${\rm B}_{\alpha}$-compositor for a class $\mathcal C$ of topological spaces} if for any topological space $Z\in\mathcal C$  and a map $g:Y\to Z$ (respectively, $g:Z\to X$) of the $\alpha$'th Baire class the composition $g\circ f:X\to Z$ (respectively, $f\circ g:Z\to Y$) also belongs to the $\alpha$'th Baire class. Such maps for $X=Y=\mathbb R$, $\mathcal C=\{\mathbb R\}$ and $\alpha=1$ were introduced and studied by Dongsheng Zhao in \cite{DZ}, where he proved the following result (here and throughout the paper $\mathbb R^+=(0,+\infty)$).

\begin{thmm}\label{th:B1comp}
For a function $f:\mathbb R\to\mathbb R$ the following conditions are equivalent:
  \begin{enumerate}
    \item $f$ is $G_\delta$-measurable (i.e., the set $f^{-1}(V)$ is $G_\delta$ for any open set $V\subseteq \mathbb R$);

    \item $f$ is the right ${\rm B}_1$-compositor;

    \item for any Baire-one function $\varepsilon:\mathbb R\to \mathbb R^+$ there exists a function $\delta:\mathbb R\to \mathbb R^+$ such that
        \begin{gather}\label{gath:intro}
        |x-y|<\min\{\delta(x),\delta(y)\}\,\,\Longrightarrow \,\, |f(x)-f(y)|<\min\{\varepsilon(f(x)),\varepsilon(f(y))\}.
        \end{gather}
  \end{enumerate}
\end{thmm}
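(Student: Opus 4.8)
The plan is to show (1)$\Rightarrow$(2)$\Rightarrow$(1) and (1)$\Rightarrow$(3)$\Rightarrow$(1). Throughout I would use the Lebesgue--Hausdorff description of ${\rm B}_1$: for $h\colon\mathbb R\to\mathbb R$ one has $h\in{\rm B}_1$ iff $h^{-1}(W)$ is $F_\sigma$ for every open $W$ (equivalently, $h^{-1}(F)$ is $G_\delta$ for every closed $F$), together with the trivialities that open subsets of $\mathbb R$ are $F_\sigma$, closed subsets are $G_\delta$, and $F_\sigma$ is closed under countable unions. For (1)$\Rightarrow$(2): given Baire-one $g$ and open $W$, write $g^{-1}(W)=\bigcup_nF_n$ with $F_n$ closed; then $(g\circ f)^{-1}(W)=\bigcup_nf^{-1}(F_n)$, and (1) makes each $f^{-1}(F_n)=\mathbb R\setminus f^{-1}(\mathbb R\setminus F_n)$ the complement of a $G_\delta$, hence $F_\sigma$, so $g\circ f\in{\rm B}_1$. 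For (2)$\Rightarrow$(1): given open $V$, the function $\chi_{\mathbb R\setminus V}$ is upper semicontinuous, hence in ${\rm B}_1$, so by (2) $\chi_{\mathbb R\setminus V}\circ f=\chi_{f^{-1}(\mathbb R\setminus V)}$ is in ${\rm B}_1$, and therefore $f^{-1}(V)=(\chi_{\mathbb R\setminus V}\circ f)^{-1}(\{0\})$ is $G_\delta$ as a ${\rm B}_1$-preimage of a closed set.

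For (3)$\Rightarrow$(1) I would fix an open $V$ (the cases $V\in\{\emptyset,\mathbb R\}$ being trivial), put $C=\mathbb R\setminus V$, $d(t)=\operatorname{dist}(t,C)$, and apply (3) to $\varepsilon=d\cdot\chi_V+\chi_C$: this is strictly positive, and as a sum of a continuous and a Baire-one function it lies in ${\rm B}_1$, so (3) yields a positive $\delta$. The key observation is that $A:=f^{-1}(V)$ is then $\delta$-separated from its complement, i.e.\ $|x-y|\ge\min\{\delta(x),\delta(y)\}$ whenever $x\in A$, $y\notin A$: for such $x,y$ one has $\varepsilon(f(x))=d(f(x))$ and $\varepsilon(f(y))=1$, whence $|f(x)-f(y)|\ge d(f(x))\ge\min\{\varepsilon(f(x)),\varepsilon(f(y))\}$, so the conclusion of~(\ref{gath:intro}) fails and hence so does its hypothesis. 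It remains to note that a set $A\subseteq\mathbb R$ which is $\delta$-separated from $\mathbb R\setminus A$ (for some positive $\delta$) is simultaneously $F_\sigma$ and $G_\delta$: for $A_n=\{x\in A:\delta(x)\ge 1/n\}$, a point $z\in\overline{A_n}\setminus A$ would be a limit of $x_j\in A_n$ with $|x_j-z|\ge\min\{1/n,\delta(z)\}>0$, which is absurd, so $\overline{A_n}\subseteq A$ and $A=\bigcup_n\overline{A_n}$ is $F_\sigma$; the separation condition being symmetric in $A$ and $\mathbb R\setminus A$, the complement is $F_\sigma$ too, so $A$ is $G_\delta$.

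For (1)$\Rightarrow$(3) I would first use the classical fact that a $G_\delta$-measurable $f\colon\mathbb R\to\mathbb R$ can be written as $\mathbb R=\bigcup_nF_n$ with $F_n$ closed, $F_n\subseteq F_{n+1}$, and each $f|_{F_n}$ continuous (this rests on preimages of both open and closed sets being $F_\sigma$ and is obtained by a standard reduction argument; alternatively one may build $\delta$ directly from the sets $f^{-1}(\mathbb R\setminus B)$, $B$ a rational ball). Given a positive $\varepsilon$ (its Baire class is not needed here), set $n(x)=\min\{n:x\in F_n\}$, pick by continuity of $f|_{F_{n(x)}}$ at $x$ a radius $r(x)>0$ with $|f(z)-f(x)|<\varepsilon(f(x))$ for $z\in F_{n(x)}\cap(x-r(x),x+r(x))$, and let $\delta(x)=\tfrac12\min\bigl(\{r(x)\}\cup\{\operatorname{dist}(x,F_k):k<n(x)\}\bigr)>0$. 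If $|x-y|<\min\{\delta(x),\delta(y)\}$ then necessarily $y\in F_{n(x)}$ --- for otherwise $n(y)>n(x)$, forcing $\delta(y)\le\tfrac12\operatorname{dist}(y,F_{n(x)})\le\tfrac12|x-y|$, a contradiction --- and then $|x-y|<r(x)$ gives $|f(x)-f(y)|<\varepsilon(f(x))$; interchanging $x$ and $y$ gives $|f(x)-f(y)|<\varepsilon(f(y))$ as well, which is exactly~(\ref{gath:intro}). The only genuinely nontrivial point in the whole argument is this closed-piece decomposition of $G_\delta$-measurable functions (equivalently, the careful weighted enumeration needed to produce a single positive gauge $\delta$ that is still small enough relative to the countably many relevant distances); everything else is bookkeeping.
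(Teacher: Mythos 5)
Your argument is correct. A point of comparison worth making explicit: the paper does not actually prove Theorem~\ref{th:B1comp} --- it is quoted from Zhao~\cite{DZ} (with Question~\ref{que:1} settled in \cite{FC:right,KS:MatStud:2012}) and is recovered only as the special case $X=Y=\mathbb R$ of the general Theorem~\ref{tururu}. Measured against that machinery, your route is genuinely different and much more elementary, which is exactly what specializing to the real line buys. For (1)$\Leftrightarrow$(2) you use the Lebesgue--Hausdorff characterization of ${\rm B}_1(\mathbb R,\mathbb R)$ directly (a bare $F_\sigma$/$G_\delta$ computation one way, the characteristic function of a closed set the other); the paper must instead pass through $\mathcal M_\beta$-measurability, $\sigma$-strongly functionally discrete bases and Theorem~\ref{th:LHB_general}, because its compositors act into all metrizable connected locally path-connected ranges rather than just $\mathbb R$ (your (2)$\Rightarrow$(1) is the same characteristic-function trick as in Theorem~\ref{th:B_alpha_M_alpha}). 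Your (3)$\Rightarrow$(1) via the single test function $\varepsilon=\mathrm{dist}(\cdot,\mathbb R\setminus V)+\chi_{\mathbb R\setminus V}$ and the $\delta$-separation observation is a neat shortcut replacing the paper's chain through Lemmas~\ref{lem:sigma_implies_ed} and~\ref{lem:ed_implies_sigma}; your construction of $\delta$ from an increasing closed cover in (1)$\Rightarrow$(3) is essentially Lemma~\ref{lem:neighborhood_map} specialized to the line. The one place where you and the paper lean on the same heavy external input is the decomposition of a $G_\delta$-measurable $f:\mathbb R\to\mathbb R$ into countably many closed pieces of continuity: this is the Jayne--Rogers/Banakh--Bokalo theorem, and calling it ``a standard reduction argument'' undersells it --- it is the one genuinely deep ingredient, as you correctly flag at the end. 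Two cosmetic points: state that the closed cover can be taken increasing by the pasting lemma (continuity on finitely many closed pieces gives continuity on their union), and note that in (3)$\Rightarrow$(1) the product $d\cdot\chi_V$ is just $d$ since $d$ vanishes on $C$, so $\varepsilon=d+\chi_C$ is visibly a continuous function plus a Baire-one one.
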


Moreover, it was introduced the notion of a $k$-continuous function in~\cite{DZ}. Namely, a function $f:\mathbb R\to \mathbb R$ is called {\it $k$-continuous} if for any function $\varepsilon:\mathbb R\to \mathbb R^+$ there exists a function $\delta:\mathbb R\to \mathbb R^+$ which satisfies~(\ref{gath:intro}) for all  $x,y\in\mathbb R$. Obviously, Theorem~\ref{th:B1comp} implies that every $k$-continuous function is the right ${\rm B}_1$-compositor. In this connection D.~Zhao  posed the following question \cite[p.~548]{DZ}.

\begin{quest}\label{que:1}
Is every right  ${\rm B}_1$-compositor $f:\mathbb R\to\mathbb R$  a $k$-continuous function?
\end{quest}

The positive answer to this question was given independently in~\cite{FC:right} and \cite{KS:MatStud:2012}.
Observe that crucial auxiliary results were a Jayne--Rogers theorem~\cite[Theorem 1]{JayneRogers} in~\cite{FC:right} and a Banakh--Bokalo theorem~\cite[Theorem 8.1]{BaBo}  in \cite{KS:MatStud:2012} on the equivalence of the $G_\delta$-measurability of a function to the piecewise continuity.

Our goal is to obtain a characterization of the right Baire compositors between arbitrary metrizable (non-separable) spaces and the following theorem (see Theorem~\ref{tururu}) is our first main result.
\begin{thmm}\label{intro:tururu}
   Let $(X,d_X)$ be a metric space, $(Y,d_Y)$ be a metric space and $f:X\to Y$  be a map. Consider the following conditions:
    \begin{enumerate}
       \item\label{ittt:1} $f$ is of the first stable Baire class (i.e., there exists a sequence of continuous maps $f_n:X\to Y$ such that for every $x\in X$ there is $k\in\mathbb N$ with $f_n(x)=f(x)$ for all $k\ge n$);

      \item\label{ittt:2} $f$ is piecewise continuous (i.e., there exists a countable  closed cover $\mathcal F$ of $X$ such that $f|_F$ is continuous for every $F\in\mathcal F$);

      \item\label{ittt:3} for any function $\varepsilon:Y\to \mathbb R^+$ there exists a function $\delta:X\to \mathbb R^+$ such that for all  $x,y\in X$
    \begin{gather}\label{gath:intro2}
      d_X(x,y)<\min\{\delta(x),\delta(y)\}\,\,\Longrightarrow \,\, d_Y(f(x),f(y))<\min\{\varepsilon(f(x)),\varepsilon(f(y))\};
    \end{gather}

      \item\label{ittt:4} for any function  $\varepsilon:Y\to \mathbb R^+$ of the first Baire class  there exists a function $\delta:X\to\mathbb R^+$ such that (\ref{gath:intro2}) holds  for all $x,y\in X$;

     \item\label{ittt:5}  $f$ is the right ${\rm B}_1$-compositor for the class of all metrizable connected and locally path-connected spaces;

      \item\label{ittt:6} $f$ is $G_\delta$-measurable and $\sigma$-discrete (which means that there exists a family $\mathcal B=\bigcup\limits_{n=1}^\infty\mathcal B_n$ of subsets of $X$, which is called {\it a base for $f$}, such that for any open set $V\subseteq Y$ there is a subfamily $\mathcal B_V\subseteq\mathcal B$ with $f^{-1}(V)=\bigcup\mathcal B_V$  and each family $\mathcal B_n$ is discrete in $X$).
    \end{enumerate}
Then (\ref{ittt:1})$\Rightarrow$(\ref{ittt:2})$\Rightarrow$(\ref{ittt:3})$\Rightarrow$(\ref{ittt:4})$\Rightarrow$(\ref{ittt:5})$\Leftrightarrow$(\ref{ittt:6}).
        If  $X$ is a hereditarily Baire space, then (\ref{ittt:6})$\Rightarrow$(\ref{ittt:2}).
If, moreover, $Y$ is a path-connected space and $Y\in\sigma {\rm AE}(X)$, then all the conditions (\ref{ittt:1})--(\ref{ittt:6}) are equivalent.
\end{thmm}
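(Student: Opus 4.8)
The plan is to prove the chain of implications in the order stated, starting with the ``easy'' forward implications and then treating the two conditional converses separately.

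\medskip

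\emph{The forward chain (\ref{ittt:1})$\Rightarrow$(\ref{ittt:2})$\Rightarrow$(\ref{ittt:3})$\Rightarrow$(\ref{ittt:4})$\Rightarrow$(\ref{ittt:5}).}
For (\ref{ittt:1})$\Rightarrow$(\ref{ittt:2}), given a sequence $(f_n)$ of continuous maps stabilizing pointwise to $f$, set $F_n=\{x\in X:f_k(x)=f(x)\text{ for all }k\ge n\}$; these sets cover $X$, and since $F_n=\bigcap_{k\ge n}\{x:f_k(x)=f_{k+1}(x)\}$ is closed and $f|_{F_n}=f_n|_{F_n}$ is continuous, $\{F_n\}$ witnesses piecewise continuity. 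For (\ref{ittt:2})$\Rightarrow$(\ref{ittt:3}), write $X=\bigcup_n F_n$ with $F_n$ closed and $f|_{F_n}$ continuous, enlarge to a point-finite (indeed countable) closed cover and, given $\varepsilon:Y\to\mathbb R^+$, define $\delta(x)$ by simultaneously requiring (i) $\delta(x)$ so small that $d_Y(f(x),f(y))<\tfrac12\varepsilon(f(x))$ whenever $y$ lies in the union of those $F_n$ containing $x$ and $d_X(x,y)<\delta(x)$ (possible by continuity of $f|_{F_n}$ and finitely many relevant indices), and (ii) $\delta(x)\le d_X(x,F_n)$ for every $n$ with $x\notin F_n$, so that the ball $B(x,\delta(x))$ meets only the ``good'' pieces; then the symmetric condition $d_X(x,y)<\min\{\delta(x),\delta(y)\}$ forces $x,y$ into a common piece and (\ref{gath:intro2}) follows. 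The implication (\ref{ittt:3})$\Rightarrow$(\ref{ittt:4}) is trivial since first Baire class functions form a subclass of all functions. For (\ref{ittt:4})$\Rightarrow$(\ref{ittt:5}): let $Z$ be metrizable, connected and locally path-connected, $g:Y\to Z$ of the first Baire class, so $g=\lim g_n$ pointwise with $g_n$ continuous. Using local path-connectedness of $Z$ one produces a Baire-one gauge $\varepsilon:Y\to\mathbb R^+$ (coming from the moduli of the $g_n$) so that the $\delta$ supplied by (\ref{ittt:4}) lets one glue the continuous maps $g_n\circ f$ along pieces into a sequence of continuous maps converging pointwise to $g\circ f$; here one mimics the classical argument for $X=Y=Z=\mathbb R$ from Zhao's Theorem~\ref{th:B1comp}, replacing intervals by path-connected neighbourhoods.

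\medskip

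\emph{The equivalence (\ref{ittt:5})$\Leftrightarrow$(\ref{ittt:6}).} This is the structural heart and I expect it to be the main obstacle. For (\ref{ittt:5})$\Rightarrow$(\ref{ittt:6}): testing $f$ against Baire-one maps into carefully chosen connected, locally path-connected metrizable targets (e.g. universal spaces such as a suitable arc or Hilbert-cube-like space, or a direct limit of such) should force, on the one hand, that $f^{-1}(V)$ is $G_\delta$ for open $V\subseteq Y$ (take $g$ a Baire-one characteristic-type map detecting $V$), and on the other hand produce a $\sigma$-discrete base for $f$; the $\sigma$-discreteness must be extracted from the fact that Baire-one maps into such $Z$ have $\sigma$-discrete ``ambiguous'' resolutions, which is where the non-separability of $X$ is genuinely felt. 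For the converse (\ref{ittt:6})$\Rightarrow$(\ref{ittt:5}), suppose $f$ is $G_\delta$-measurable with $\sigma$-discrete base $\mathcal B=\bigcup\mathcal B_n$; given $Z$ metrizable connected and locally path-connected and $g:Y\to Z$ Baire-one, $g\circ f$ is already $G_\delta$-measurable (preimages of open sets are $G_\delta$, composing measurabilities), and moreover $\sigma$-discretely so because the base $\mathcal B$ refines the $G_\delta$-structure of $(g\circ f)^{-1}(\text{open})$; then one invokes a characterization of first Baire class maps into connected, locally path-connected metrizable spaces as exactly the $\sigma$-discretely $G_\delta$-measurable ones (the analogue, for $\sigma$-discrete / non-separable range considerations, of the classical Lebesgue--Hausdorff theorem), which must be available from the references or proved inline.

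\medskip

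\emph{The two conditional converses.} If $X$ is hereditarily Baire, then (\ref{ittt:6})$\Rightarrow$(\ref{ittt:2}): from $G_\delta$-measurability and the Baire property of all closed subspaces one runs a Baire-category argument on each closed set — on a closed $F\subseteq X$ the restriction $f|_F$ has a point of continuity, and iterating transfinitely (using that $F$ is still hereditarily Baire) exhausts $X$ by countably many closed pieces of continuity; the $\sigma$-discreteness of the base ensures the transfinite process terminates at a countable ordinal. This is essentially the Banakh--Bokalo style argument adapted to the non-separable setting. Finally, if in addition $Y$ is path-connected and $Y\in\sigma\mathrm{AE}(X)$, then (\ref{ittt:2})$\Rightarrow$(\ref{ittt:1}) closes the loop: given a countable closed cover $\{F_n\}$ with $f|_{F_n}$ continuous, the $\sigma$-absolute-extensor hypothesis lets one extend each $f|_{F_n}$ (or rather the partial maps agreeing on $\bigcup_{k\le n}F_k$) continuously to all of $X$, and path-connectedness of $Y$ together with a standard telescoping of these extensions yields a sequence of continuous maps $f_n:X\to Y$ that stabilizes to $f$ at every point. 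The $\sigma\mathrm{AE}(X)$ condition is exactly what is needed here and is where the hypothesis on $Y$ is consumed; I expect this last step to be routine once the right extension lemma is isolated.
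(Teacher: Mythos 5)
Your overall architecture (easy forward chain, the equivalence (\ref{ittt:5})$\Leftrightarrow$(\ref{ittt:6}) as the structural core, Banakh--Bokalo-style arguments for the two conditional converses) matches the paper's, and your deferrals for (\ref{ittt:6})$\Rightarrow$(\ref{ittt:2}) and (\ref{ittt:2})$\Rightarrow$(\ref{ittt:1}) are exactly the paper's own citations. But the central step (\ref{ittt:4})$\Rightarrow$(\ref{ittt:5}) does not go through as written. You propose to ``glue the continuous maps $g_n\circ f$'': these maps are not continuous, since $f$ is not, so there is nothing to glue; and mimicking Zhao's argument for $X=Y=Z=\mathbb R$ cannot survive the passage to non-separable $X$, which is the whole point of the theorem. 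The missing ingredient is the non-separable Lebesgue--Hausdorff theorem: for $Z$ metrizable, connected and locally path-connected, ${\rm B}_1(X,Z)$ coincides with the class $\Sigma_1^f(X,Z)$ of maps admitting a $\sigma$-strongly functionally discrete base of ambiguous sets (Theorem~\ref{th:LHB_general}), together with an $\varepsilon$--$\delta$ characterization of $\Sigma_1^f$ (Lemmas~\ref{lem:sigma_implies_ed} and~\ref{lem:ed_implies_sigma}). With these, the paper's route is short: $g\in{\rm B}_1(Y,Z)=\Sigma_1^f(Y,Z)$ admits, for each $\varepsilon>0$, a Baire-one gauge $\gamma:Y\to\mathbb R^+$ with $d_Y(y,y')<\min\{\gamma(y),\gamma(y')\}\Rightarrow d_Z(g(y),g(y'))<\varepsilon$; feeding $\gamma$ into (\ref{ittt:4}) shows $g\circ f$ satisfies the Cauchy-type criterion, hence lies in $\Sigma_1^f(X,Z)={\rm B}_1(X,Z)$. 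The same machinery (plus embedding $Y$ into $\ell_\infty(Y)\in\mathcal Z$ and testing against characteristic functions of ambiguous sets) is what turns your ``should force'' in (\ref{ittt:5})$\Rightarrow$(\ref{ittt:6}) into a proof; without Theorem~\ref{th:LHB_general} there is no argument there. Note also that in (\ref{ittt:6})$\Rightarrow$(\ref{ittt:5}) the relevant conclusion about $g\circ f$ is that preimages of open sets are $F_\sigma$ (first Lebesgue class), obtained because a $G_\delta$-measurable $f$ pulls $F_\sigma$-sets back to $F_\sigma$-sets; calling $g\circ f$ ``$G_\delta$-measurable'' names the wrong class.

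A smaller but genuine defect sits in (\ref{ittt:2})$\Rightarrow$(\ref{ittt:3}): your condition (ii), $\delta(x)\le d_X(x,F_n)$ for \emph{every} $n$ with $x\notin F_n$, imposes infinitely many constraints and can force $\delta(x)=0$ (take closed pieces $F_n$ accumulating at $x$ without containing it). The fix is to disjointify, $A_n=F_n\setminus(F_1\cup\dots\cup F_{n-1})$, and for the unique $n$ with $x\in A_n$ impose only the finitely many constraints $\delta(x)\le d_X(x,F_k)$ for $k<n$ with $x\notin F_k$, together with a modulus of continuity of $f|_{F_n}$ at $x$ of size $\tfrac12\varepsilon(f(x))$; the symmetric hypothesis $d_X(x,y)<\min\{\delta(x),\delta(y)\}$ then forces $x$ and $y$ into a common $A_n\subseteq F_n$ and yields both bounds $\tfrac12\varepsilon(f(x))$ and $\tfrac12\varepsilon(f(y))$. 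This is in essence what the paper's Lemma~\ref{lem:neighborhood_map} accomplishes via a monotone $\sigma$-discrete refinement, and that lemma also delivers the Baire-one $\delta$ needed elsewhere.
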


We also prove the following result (see Theorem~\ref{th:general_sigma_H_barely}) which serves as an important tool in the proof of Theorem~\ref{intro:tururu}.

\begin{thmm}\label{th:intro:sigma}
 Let $(X,d_X)$ be a hereditarily Baire metric space and $(Y,d_Y)$ be a metric space. For a map $f:X\to Y$  the following conditions are equivalent:
  \begin{enumerate}
    \item\label{it:intro:1} $f$ is a $\sigma$-discrete map with a base which consists of $F_\sigma$-subsets of $X$;

    \item\label{it:intro:2} $f$ belongs to the first Lebesgue class (i.e., the set $f^{-1}(V)$ is $F_\sigma$ in $X$ for any open set $V\subseteq Y$);

    \item\label{it:intro:3} $f$ is barely continuous (i.e., for each non-empty closed subset $F\subseteq X$ the restriction $f|_F$ has a continuity point);

    \item\label{it:intro:4} for any $\varepsilon>0$ there exists a Baire-one function $\delta:X\to \mathbb R^+$ such that for all $x,y\in X$
  \begin{gather}\label{gath:intro3}
    d_X(x,y)<\min\{\delta(x),\delta(y)\} \,\,\Longrightarrow\,\, d_Y(f(x),f(y))<\varepsilon.
 \end{gather}

 \item\label{it:intro:5} for any $\varepsilon>0$ there exists a function $\delta:X\to \mathbb R^+$ such that
  (\ref{gath:intro3}) holds  for all $x,y\in X$.
  \end{enumerate}
  \end{thmm}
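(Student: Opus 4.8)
The plan is to run the cyclic scheme
\[(1)\Rightarrow(2)\Rightarrow(1),\qquad (1)\Rightarrow(3)\Rightarrow(5)\Rightarrow(2),\qquad (1)\Rightarrow(4)\Rightarrow(5),\]
which together force all five conditions to be equivalent. Two links are immediate. For $(1)\Rightarrow(2)$: if $\mathcal B=\bigcup_n\mathcal B_n$ is a base for $f$ by $F_\sigma$-sets and $f^{-1}(V)=\bigcup\mathcal B_V$, then $f^{-1}(V)=\bigcup_n\bigcup(\mathcal B_V\cap\mathcal B_n)$, and each $\bigcup(\mathcal B_V\cap\mathcal B_n)$, being a discrete union of $F_\sigma$-sets (a discrete union of closed sets is closed), is $F_\sigma$. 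And $(4)\Rightarrow(5)$ is trivial. The one link I do not expect to obtain cheaply is $(2)\Rightarrow(1)$; see the last paragraph.

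The implications $(1)\Rightarrow(3)$ and $(1)\Rightarrow(4)$ both begin with the same extraction. Fix $\varepsilon>0$, let $\{W_j\}$ be the family of all open subsets of $Y$ of diameter $<\varepsilon$, write $f^{-1}(W_j)=\bigcup\mathcal B_{W_j}$ with $\mathcal B_{W_j}\subseteq\mathcal B$; then $\bigcup_j\mathcal B_{W_j}\subseteq\mathcal B$ is a $\sigma$-discrete family $\{A_i\}_i=\bigcup_n\{A_i:i\in I_n\}$ of $F_\sigma$-sets covering $X$ with ${\rm diam}\,f(A_i)<\varepsilon$. Splitting $A_i=\bigcup_k A_i^k$ into closed pieces, discreteness gives that $D^{\varepsilon}_{n,k}:=\bigcup\{A_i^k:i\in I_n\}$ is closed, that $X=\bigcup_k D^{\varepsilon}_{n,k}$ for each $n$, and that every point of $D^{\varepsilon}_{n,k}$ has a neighbourhood $U$ with ${\rm diam}\,f(U\cap D^{\varepsilon}_{n,k})<\varepsilon$. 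For $(1)\Rightarrow(3)$: given nonempty closed $F\subseteq X$, the subspace $F$ is Baire (here hereditary Baireness is used); applying the Baire category theorem in $F$ with $\varepsilon=1/n$ to the countable closed covers $F=\bigcup_k(D^{1/n}_{n,k}\cap F)$ and intersecting over $n$ the dense open sets $\bigcup_k{\rm int}_F(D^{1/n}_{n,k}\cap F)$, one finds $x_0\in F$ such that for every $n$ some relative ball around $x_0$ has $f$-image of diameter $<1/n$, i.e.\ a continuity point of $f|_F$. For $(1)\Rightarrow(4)$: enumerate the pairs $(n,k)$, set $\widehat D_m:=D^{\varepsilon}_{1}\cup\dots\cup D^{\varepsilon}_{m}$ (an increasing sequence of closed sets with union $X$), $\nu(x):=\min\{m:x\in\widehat D_m\}$, and
\[\delta(x):=\tfrac12\,\sup\bigl\{r\le 1:{\rm diam}\,f(B(x,r)\cap\widehat D_{\nu(x)})<\varepsilon\bigr\}.\]
This supremum is positive since $x\in D^{\varepsilon}_{\nu(x)}\setminus\widehat D_{\nu(x)-1}$ has a neighbourhood $U$ with $U\cap\widehat D_{\nu(x)}$ contained in a single $A_i^k$. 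If $d_X(x,y)<\min\{\delta(x),\delta(y)\}$ and, say, $\nu(y)\le\nu(x)$, then $y\in\widehat D_{\nu(y)}\subseteq\widehat D_{\nu(x)}$, and choosing $r$ with $d_X(x,y)<r<2\delta(x)$ gives $x,y\in B(x,r)\cap\widehat D_{\nu(x)}$, so $d_Y(f(x),f(y))<\varepsilon$. Finally $\delta$ is Baire-one: on the stratum $\widehat D_m\setminus\widehat D_{m-1}$ it coincides with $\tfrac12 g_m$, where $g_m(x)=\sup\{r\le1:{\rm diam}\,f(B(x,r)\cap\widehat D_m)<\varepsilon\}$ is $1$-Lipschitz on all of $X$; hence each $\delta^{-1}((a,\infty))$ and $\delta^{-1}((-\infty,b))$ is a countable union of sets of the form (closed)$\cap$(open), hence $F_\sigma$, so $\delta$ is of the first Lebesgue class and therefore Baire-one.

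For $(3)\Rightarrow(5)$ I would use the transfinite $\varepsilon$-derivation $F_0=X$, $F_{\xi+1}=F_\xi\setminus C_\varepsilon(F_\xi)$, $F_\lambda=\bigcap_{\xi<\lambda}F_\xi$, where $C_\varepsilon(F)=\{x\in F:\exists$ open $U\ni x$ with ${\rm diam}\,f(U\cap F)<\varepsilon\}$. Since $C_\varepsilon(F)$ is relatively open in $F$ and, by bare continuity, nonempty for every nonempty closed $F$, the sets $F_\xi$ strictly decrease to $\varnothing$, and each $x$ lies in a unique stratum $E_{\xi(x)}:=F_{\xi(x)}\setminus F_{\xi(x)+1}$. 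Choosing for each $x$ an open $U_x\ni x$ with ${\rm diam}\,f(U_x\cap F_{\xi(x)})<\varepsilon$ and $U_x\cap F_{\xi(x)}\subseteq E_{\xi(x)}$, and $\delta(x)>0$ with $B(x,\delta(x))\subseteq U_x$, the same bookkeeping as above — now using $F_{\xi(y)}\subseteq F_{\xi(x)}$ when $\xi(x)\le\xi(y)$, which forces $\xi(y)=\xi(x)$ — yields condition $(5)$. For $(5)\Rightarrow(2)$: given open $V\subseteq Y$ we may assume $V\ne Y$; write $V=\bigcup_m V_m$ with $V_m=\{y:d_Y(y,Y\setminus V)>1/m\}$, let $\delta_{2m}$ witness $(5)$ for $\varepsilon=1/(2m)$, and put $A_{m,k}=\{x\in f^{-1}(V_m):\delta_{2m}(x)>1/k\}$. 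If $x\in\overline{A_{m,k}}$ there is $x'\in A_{m,k}$ with $d_X(x,x')<\min\{1/k,\delta_{2m}(x)\}$, whence $d_Y(f(x),f(x'))<1/(2m)$ while $d_Y(f(x'),Y\setminus V)>1/m$, so $f(x)\in V$; therefore $f^{-1}(V)=\bigcup_{m,k}A_{m,k}\subseteq\bigcup_{m,k}\overline{A_{m,k}}\subseteq f^{-1}(V)$, and $f^{-1}(V)$ is $F_\sigma$.

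The step I expect to be the main obstacle is $(2)\Rightarrow(1)$: a map of the first Lebesgue class between metric spaces need not in general admit a $\sigma$-discrete base, so here one has to invoke (or reprove in the present setting) the non-separable descriptive theory of $\sigma$-discrete maps — in the spirit of Hansell's results on first-class Borel maps — and this, rather than the $\varepsilon$–$\delta$ manipulations above, is the real technical content of the theorem. The remaining six implications of the scheme are, as sketched, elementary.
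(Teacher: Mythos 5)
Your seven sketched implications are essentially all correct: (1)$\Rightarrow$(2) (discrete unions of closed sets are closed, hence discrete unions of $F_\sigma$-sets are $F_\sigma$), the extraction of the closed strata $D^\varepsilon_{n,k}$ with locally small $f$-image and the Baire-category argument for (1)$\Rightarrow$(3), the sup-based $\delta$ for (1)$\Rightarrow$(4) (modulo the typo that $X=\bigcup_{n,k}D^\varepsilon_{n,k}$, union over both indices), the transfinite $\varepsilon$-derivation for (3)$\Rightarrow$(5), and the Lee--Tang--Zhao-style argument for (5)$\Rightarrow$(2) all check out. Some of these are genuinely different from the paper's route: the paper gets (1)$\Rightarrow$(4) from a neighborhood-map built out of an increasing chain of discrete refinements (Lemmas~\ref{lem:neighborhood_map} and~\ref{lem:sigma_implies_ed}), and, more importantly, instead of your (5)$\Rightarrow$(2) it proves (5)$\Rightarrow$(1) \emph{directly} (Lemma~\ref{lem:ed_implies_sigma}): take for each $n$ a $\sigma$-discrete closed cover $\mathcal H_n$ of $X$ of mesh $<\tfrac1{3n}$, intersect its members with $\overline{\{\delta>\tfrac1n\}}$, and a density-plus-triangle-inequality estimate shows each piece has $f$-image of diameter $<\varepsilon$; Lemma~\ref{lemma:criterium_Leb} then yields a $\sigma$-discrete closed base. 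This is elementary, uses only paracompactness of the metric $X$, and would let you replace your (5)$\Rightarrow$(2) by (5)$\Rightarrow$(1).

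The genuine gap is exactly where you flag it, and it is fatal to the scheme as written: you never establish any implication \emph{out of} condition (2), so (2) is not shown to imply (1), (3), (4) or (5), and the five conditions are not proved equivalent. (Even with the upgrade (5)$\Rightarrow$(1) above, condition (2) remains a sink.) This is the one place where hereditary Baireness of $X$ is indispensable, and the paper does not close it via Hansell either: Hansell's result is cited only for the easy direction (1)$\Rightarrow$(2). The hard direction is routed as (2)$\Rightarrow$(3), i.e.\ a first-Lebesgue-class map on a hereditarily Baire metric space is barely continuous --- this is Koumoullis's theorem \cite[Theorem 4.12]{Kum}, nontrivial precisely because $Y$ need not be separable so one cannot run the classical countable-base oscillation argument --- followed by (3)$\Rightarrow$(1), which is the paper's Theorem~\ref{th:barely_cont_is_sigma_discrete}: a (weakly) barely continuous map into $\ell_\infty(Y)$ is Baire-one (via the local-to-global $\varepsilon$-approximation of Lemma~\ref{lem:local_uniform_approach} and Theorem~\ref{thm:barely_is_Baire1}), and Baire-one maps into nice ranges coincide with $\Sigma_1^f$ by Theorem~\ref{th:LHB_general}. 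So your diagnosis that the return to (1) is ``the real technical content'' is right, but the proof is incomplete without either importing Koumoullis's theorem for (2)$\Rightarrow$(3) or reproducing an argument of comparable depth.
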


  Notice that the equivalence of conditions (\ref{it:intro:1}) and (\ref{it:intro:2}) in Theorem~\ref{th:intro:sigma} was established in \cite{Hansell:1971} in the case of absolutely analytic (in particular, complete) metric space $X$. The equivalence of conditions (\ref{it:intro:2}) and (\ref{it:intro:5}) was firstly obtained in~\cite{LTZ}  for Polish spaces $X$ and $Y$ (see also~\cite{FC:new} for an alternative proof). Condition (\ref{it:intro:5}) for functions $f:\mathbb R\to\mathbb R$ was also studied in~\cite{JLL,Lec,Sark}.

 Finally, the next our theorem (see Theorem~\ref{tarara}) gives a characterization of left Baire compositors.

 \begin{thmm}\label{th:intro:left}
   Let $X$ be a $T_1$-space, $Y$ be a perfectly normal space, $f:X\to Y$ be a map and $\alpha\in[1,\omega_1)$.
If one of the following conditions holds:
\begin{itemize}
  \item[(i)] $\alpha=1$ and $X$ is a connected and locally path-connected metrizable space, or

  \item[(ii)] $\alpha>1$ and $X$ is a first countable space  such that  for any finite sequence $U_1,\dots, U_n$ of open subsets of $X$ there exists a continuous map $\varphi:[1,n]\to X$ with $\varphi(i)\in U_i$ for every $i\in\{1,n\}$,
\end{itemize}
then the following conditions are equivalent:
   \begin{enumerate}
      \item $f$ is continuous;

      \item $f$ is the left ${\rm B}_\alpha$-compositor.
      \end{enumerate}
 \end{thmm}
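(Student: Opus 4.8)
The proof naturally splits into the two implications, and only $(2)\Rightarrow(1)$ uses the structural hypotheses on $X$, $Y$ and $\alpha$.

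For $(1)\Rightarrow(2)$ the plan is a routine transfinite induction on $\alpha$: if $f$ is continuous and $g\colon Z\to X$ is of the $\beta$'th Baire class for some $\beta<\alpha$, write $g=\lim_n g_n$ with each $g_n$ of a smaller Baire class; then $f\circ g_n\to f\circ g$ pointwise by continuity of $f$, and each $f\circ g_n$ is of a smaller Baire class by the inductive hypothesis, so $f\circ g$ is of the $\alpha$'th Baire class. No assumption on $X$, $Y$ or $\alpha$ is needed here.

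For $(2)\Rightarrow(1)$ I would argue contrapositively. Assume $f$ fails to be continuous at some point $x_0\in X$. Since $X$ is first countable (which holds in both cases (i) and (ii)), one can pick a sequence $(x_n)_n$ in $X$ with $x_n\to x_0$ and an open set $V\subseteq Y$ such that $f(x_0)\in V$ while $f(x_n)\notin V$ for every $n$. The heart of the argument is to produce a space $Z$ (in the relevant class) together with a map $g\colon Z\to X$ of the $\alpha$'th Baire class which takes values only in $\{x_0\}\cup\{x_n:n\in\mathbb N\}$ and for which $g^{-1}(\{x_0\})$ is of the multiplicative Borel class $\alpha$ but not of the additive class $\alpha$ in $Z$ (for $\alpha=1$: a $G_\delta$ which is not $F_\sigma$). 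Granting this, $g^{-1}(\{x_0\})$ coincides with $(f\circ g)^{-1}(V)$, since no $x_n$ lies in $f^{-1}(V)$ whereas $x_0$ does; hence $(f\circ g)^{-1}(V)$ is not of the additive class $\alpha$. But if $f$ were the left ${\rm B}_\alpha$-compositor then $f\circ g$ would be of the $\alpha$'th Baire class, and since $Y$ is perfectly normal this forces preimages of open sets under $f\circ g$ to be of the additive class $\alpha$ --- a contradiction. (Here $X$ being $T_1$ is used so that $\{x_0\}$ is closed, hence $g^{-1}(\{x_0\})$ is automatically of multiplicative class $\alpha$; the point is to arrange it not to be of additive class $\alpha$.)

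It remains to construct $g$, which is where cases (i) and (ii) diverge and where I expect the main work to lie. In case (i), with $\alpha=1$, the plan is to take $Z=[0,1]$, enumerate the rationals as $(q_k)_k$, and set $g(t)=x_0$ for irrational $t$ and $g(q_k)=x_k$. To see that $g$ is of the first Baire class, I would approximate it by continuous maps $g_n\colon[0,1]\to X$ which equal $x_0$ off a family of pairwise disjoint intervals of radius $\delta_n\to 0$ centred at $q_1,\dots,q_n$ and which on the interval about $q_k$ run a loop from $x_0$ out to $x_k$ and back; local path-connectedness of $X$ at $x_0$ together with $x_n\to x_0$ lets these loops stay inside neighbourhoods of $x_0$ of vanishing diameter, which is exactly what makes $g_n(t)\to x_0=g(t)$ at each irrational $t$, and passing to the limit collapses the loop values so that $g$ does take values only in $\{x_0\}\cup\{x_k\}$ with $g^{-1}(\{x_0\})=[0,1]\setminus\mathbb Q$. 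In case (ii), with $\alpha>1$, the same idea must be carried out one level higher: one fixes a suitable space $Z$ carrying a set of multiplicative class $\alpha$ that is not of additive class $\alpha$, and builds $g$ by transfinite recursion as a pointwise limit of maps of lower Baire class, the continuous building blocks now being supplied by the hypothesis that any finite tuple of open sets of $X$ can be threaded by a continuous map on an interval (this replaces local path-connectedness). I expect the technical heart of the whole theorem to be this case-(ii) construction, namely checking simultaneously that $g$ really is of the $\alpha$'th Baire class and that $g^{-1}(\{x_0\})$ has exactly the prescribed descriptive complexity; the remaining ingredient --- that $\alpha$'th Baire class maps into a perfectly normal space carry open sets back to sets of the additive class $\alpha$ --- is standard.
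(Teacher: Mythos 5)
Your overall strategy is exactly the paper's: argue contrapositively, pick $x_n\to x_0$ and an open $V\ni f(x_0)$ missing all $f(x_n)$, build a Riemann-type map $g$ with values in $\{x_0\}\cup\{x_n\}$ whose fibre over $x_0$ has high multiplicative but not additive complexity, and derive a contradiction from the Lebesgue class of $f\circ g$ (perfect normality of $Y$ entering exactly where you say it does). Your case (i) construction is essentially a self-contained proof of what the paper imports from Fosgerau's theorem (that an ${\rm H}_1$-map of $\mathbb R$ into a connected, locally path-connected metrizable space is of Baire class one), and it works.

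The genuine gap is case (ii), which you explicitly leave as a plan. The paper fills it in two steps that your sketch does not supply: first, a Lebesgue--Hausdorff-type reduction (its Theorem~\ref{th:cor:Kuratowski}) showing that an ${\rm H}_\beta$-map with separable metrizable range is a pointwise limit of \emph{finite-range} maps of lower Lebesgue classes; second, Lemma~\ref{lemma:almostBaire}, which converts a finite-range ${\rm H}_\alpha$-map into a Baire class $\alpha$ map by writing each level set as an increasing union of lower-class sets, reducing to $[1,n]$-valued Baire functions via a disjoint-reduction lemma, and post-composing with the continuous threading maps $\gamma_k:[1,n]\to X$ that hit shrinking neighbourhoods $V_{i,k}$ of the finitely many values --- this is precisely where the densely-connected hypothesis and first countability of $X$ are consumed, and it is done by induction on $\alpha$ rather than by a single recursion. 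Without this (or an equivalent argument) the claim that your $g$ lies in ${\rm B}_\alpha(\mathbb R,X)$ for $\alpha>1$ is unproved. A second, smaller slip: for $\alpha\ge\omega_0$ a Baire class $\alpha$ map into a perfectly normal space only pulls open sets back to the additive class $\alpha+1$, so the witnessing set must be taken in $\mathcal M_{\alpha+1}\setminus\mathcal A_{\alpha+1}$ (the paper's parameter $\beta$); with your choice of class $\alpha$ the final contradiction would fail at limit levels.
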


Observe that similar characterization was proved in \cite{FC:left} for $X=Y=\mathbb R$ and $\alpha=1$.

\section{Terminology and notations}

Let  $X$, $Y$ be topological spaces. By ${\rm C}(X,Y)$ we denote the collection of all continuous maps between $X$ and $Y$.

A sequence $(f_n)_{n=1}^\infty$ of maps $f_n:X\to Y$ is {\it stably convergent to a map $f:X\to Y$ on $X$} (we denote this fact by $f_n\stackrel{{\rm st}}{\mathop{\longrightarrow}} f$) if for every $x\in X$ there exists $k\in\mathbb N$ such that $f_n(x)=f(x)$ for all  $n\ge k$.
If $A\subseteq Y^X$, then the symbols $\overline{A}^{\,\,{\rm st}}$ and $\overline{A}^{\,\,{\rm p}}$ stands for the collection of all stable and pointwise  limits of sequences of maps from $A$, respectively.

We  inductively define Baire classes and stable Baire classes as follows: let
$$
{\rm B}_0(X,Y)={\rm B}_0^{{\rm st}}(X,Y)={\rm C}(X,Y)
$$
and for each ordinal $\alpha\in (0,\omega_1)$ let ${\rm B}_\alpha(X,Y)$ be the family of all maps of {\it the $\alpha$'th Baire class} and ${\rm B}_\alpha^{\rm st}(X,Y)$ be the family of all maps of {\it the $\alpha$'th stable Baire class} which defined by the rules
$$
{\rm B}_\alpha(X,Y)=\overline{\bigcup\limits_{\beta<\alpha}{\rm B}_\beta(X,Y)}^{\,\,{\rm p}}\,\,\,\,\mbox{and}\,\,\,\, {\rm B}_\alpha^{\rm st}(X,Y)=\overline{\bigcup\limits_{\beta<\alpha}{\rm B}_\beta^{\rm st}(X,Y)}^{\,\,{\rm st}}.
$$

{\it  The right ${\rm B}_{\alpha}$-compositor (for a class  $\mathcal C$ of topological spaces)} is a map $f:X\to Y$ such that for any space $Z$  (for any  $Z\in\mathcal C$) and a map $g\in {\rm B}_\alpha(Y,Z)$ the composition  $g\circ f:X\to Z$ is of the $\alpha$'th Baire class.

Similarly, a map  $f:X\to Y$ is called {\it the left ${\rm B}_\alpha$-compositor (for a class  $\mathcal C$ of topological spaces)} if for any space $Z$ (for any  $Z\in\mathcal C$) and a map $g\in {\rm B}_\alpha(Z,X)$ the composition $f\circ g:Z\to Y$ belongs to the $\alpha$'th Baire class.

Let $\mathcal M_0(X)$ be the family of all functionally closed subsets of  $X$ and let $\mathcal A_0(X)$ be the family of all functionally open subsets of  $X$. For every  $\alpha\in [1,\omega_1)$ we put
\begin{gather*}
   \mathcal M_{\alpha}(X)=\Bigl\{\bigcap\limits_{n=1}^\infty A_n: A_n\in\bigcup\limits_{\beta<\alpha}\mathcal A_{\beta}(X),\,\, n=1,2,\dots\Bigr\}\,\,\,\mbox{and}\,\,\,
   \mathcal A_{\alpha}(X)=\Bigl\{\bigcup\limits_{n=1}^\infty A_n: A_n\in\bigcup\limits_{\beta<\alpha}\mathcal M_{\beta}(X),\,\, n=1,2,\dots\Bigr\}.
 \end{gather*}
Elements from the class  $\mathcal M_\alpha(X)$ belong to {\it the $\alpha$'th functionally multiplicative class} and elements from  $\mathcal A_\alpha(X)$ belong to {\it the $\alpha$'th functionally additive class} in  $X$. We say that a set is {\it functionally ambiguous of the $\alpha$'th class} if it belongs to $\mathcal M_\alpha(X)\cap\mathcal A_\alpha(X)$.

A map $f:X\to Y$ is of the {\it $\alpha$'th (functionally) Lebesgue class}, if the preimage $f^{-1}(V)$ of any open set $V\subseteq Y$ is of the $\alpha$'th (functionally) additive class $\alpha$ in $X$. The collection of all maps of the $\alpha$'th (functionally) Lebesgue class we denote by ${\rm H}_{\alpha}(X,Y)$ (${\rm K}_{\alpha}(X,Y)$).

A family $\mathcal A=(A_i:i\in I)$ of subsets of a topological space $X$ is called
   {\it discrete}, if every point of $X$ has an open neighborhood which intersects at most one set from $\mathcal A$;
   {\it strongly discrete}, if there exists a discrete family $(U_i:i\in I)$ of open subsets of $X$ such that $\overline{A_i}\subseteq U_i$ for every $i\in I$;
   {\it strongly functionally discrete} or, briefly, {\it an sfd family}, if there exists a discrete family $(U_i:i\in I)$ of functionally open subsets of $X$ such that $\overline{A_i}\subseteq U_i$ for every $i\in I$. Notice that in a metrizable space $X$ each discrete family of sets is strongly functionally discrete.

A family $\mathcal B$ of sets of a topological space $X$ is called {\it a base}  for a map $f:X\to Y$ if the preimage $f^{-1}(V)$ of an arbitrary open set  $V$ in $Y$ is a union of sets from $\mathcal B$. In the case when $\mathcal B$ is a countable union of (strongly functionally) discrete families, we say that $f$ is $\sigma$-(strongly functionally) discrete and write this fact as $f\in\Sigma(X,Y)$ ($f\in \Sigma^f(X,Y)$). If a map $f$ has a $\sigma$-(strongly functionally) discrete base which consists of (functionally) ambiguous sets of the $\alpha$'th class, then we say that $f$ belongs to {\it the class $\Sigma_\alpha(X,Y)$ (or to $\Sigma_\alpha^f(X,Y)$, respectively)}.

We will use the next result which, in fact, was established in \cite{Karlova:TA:2015} and \cite{Karlova:EJM:2015}.

\begin{thm}\label{th:LHB_general}
  Let  $\alpha\in[0,\omega_1)$, $X$ be a topological space, $Y$ be a metrizable connected and locally path-connected space. Then
  \begin{enumerate}
  \item[(i)] ${\rm B}_{\alpha}(X,Y)={\Sigma}_{\alpha}^f(X,Y)$, if $\alpha<\omega_0$,

  \item[(ii)] ${\rm B}_{\alpha}(X,Y)={\Sigma}_{\alpha+1}^f(X,Y)$, if $\alpha\ge \omega_0$.
\end{enumerate}
\end{thm}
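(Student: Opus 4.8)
The plan is to prove both inclusions of each equality by transfinite induction on $\alpha$, writing $\alpha^{\ast}=\alpha$ when $\alpha<\omega_0$ and $\alpha^{\ast}=\alpha+1$ when $\alpha\ge\omega_0$, so that the whole assertion reads uniformly as ${\rm B}_\alpha(X,Y)=\Sigma^f_{\alpha^\ast}(X,Y)$. The base case $\alpha=0$ amounts to checking that a map into the metrizable space $Y$ is continuous if and only if it carries a $\sigma$-strongly-functionally-discrete ($\sigma$-sfd) base of functionally clopen (i.e.\ functionally ambiguous of class $0$) sets; for this I would pull back a $\sigma$-discrete base of $Y$, which exists by the Bing metrization theorem, and invoke the observation from Section~2 that in a metrizable space every discrete family is automatically sfd.

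For the inclusion ${\rm B}_\alpha(X,Y)\subseteq\Sigma^f_{\alpha^\ast}(X,Y)$, I take $f$ to be a pointwise limit of a sequence $(f_k)$ with $f_k\in\bigcup_{\beta<\alpha}{\rm B}_\beta(X,Y)$, so that by the induction hypothesis each $f_k$ has a $\sigma$-sfd base of functionally ambiguous sets of the appropriate class. Fixing the metric $d_Y$ and writing each open $V\subseteq Y$ as a countable union $V=\bigcup_m V_m$ with $V_m=\{y:d_Y(y,Y\setminus V)>1/m\}$, pointwise convergence yields the standard Lebesgue--Hausdorff formula $f^{-1}(V)=\bigcup_m\bigcup_N\bigcap_{k\ge N}f_k^{-1}(V_m)$, which simultaneously places $f^{-1}(V)$ in the correct functionally additive class and exhibits it as a $\sigma$-discretely decomposable union of members assembled from the bases of the $f_k$. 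The technical heart here is Hansell's theory of $\sigma$-discretely decomposable families: I would verify that the countable intersection and union operations above preserve both sfd-ness and the functionally ambiguous class, and this is exactly what delivers a single $\sigma$-sfd base of ambiguous class-$\alpha^\ast$ sets for $f$.

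For the reverse inclusion $\Sigma^f_{\alpha^\ast}(X,Y)\subseteq{\rm B}_\alpha(X,Y)$ I would use the geometry of $Y$. Given a $\sigma$-sfd base $\mathcal B=\bigcup_n\mathcal B_n$ of $f$ consisting of functionally ambiguous sets of class $\alpha^\ast$, I fix a sequence of open covers $\mathcal W_m$ of $Y$ whose meshes tend to $0$; pulling these back through $\mathcal B$ produces, for each $m$, a $\sigma$-sfd cover of $X$ by ambiguous class-$\alpha^\ast$ sets on each of which $f$ oscillates by less than $1/m$. Since a functionally ambiguous set of class $\alpha^\ast$ is precisely the set on which a suitable $[0,1]$-valued function of the $\alpha$'th Baire class equals $1$, I can convert this cover into a partition-of-unity-type family of Baire class-$\alpha$ scalar functions, and then use connectedness and local path-connectedness of $Y$ to glue chosen representatives $y_B\in Y$ along paths into a single map $f_m:X\to Y$ of the $\alpha$'th Baire class with $d_Y(f_m(x),f(x))<1/m$. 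The sequence $(f_m)$ then converges pointwise (and, with care, stably) to $f$, whence $f\in{\rm B}_\alpha(X,Y)$.

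The main obstacle is precisely this last construction: converting a $\sigma$-discrete partition into ambiguous class-$\alpha^\ast$ pieces into a genuine map into $Y$ of the correct Baire class. This is where connectedness and local path-connectedness are indispensable, since they play the role that convex combinations play when $Y=\mathbb R$, allowing path-interpolation between the representative values while keeping the approximants inside $Y$ and of class $\alpha$. A second, more bookkeeping-type difficulty is the index shift at limit ordinals: for $\alpha\ge\omega_0$ the correspondence between ambiguous classes and Baire classes loses a unit, because $\mathcal A_\alpha(X)$ and $\mathcal M_\alpha(X)$ at a limit $\alpha$ are built only from strictly lower classes, so the characteristic-function building blocks of exact class $\alpha$ require ambiguity of class $\alpha+1$. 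Tracking this shift consistently through both inductive steps is exactly what forces the two-case form of the statement.
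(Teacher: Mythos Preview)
The paper does not actually prove this theorem: its entire argument is to declare $\alpha=0$ evident and to cite two of the author's earlier papers (Theorem~5 of \cite{Karlova:EJM:2015} for $\alpha=1$, and Theorem~22 of \cite{Karlova:TA:2015} to bootstrap to all $\alpha>1$). There is therefore no in-paper argument to compare against; your proposal is, in effect, a sketch of what those external references establish, and its overall shape---the Lebesgue--Hausdorff formula for ${\rm B}_\alpha\subseteq\Sigma_{\alpha^\ast}^f$ and a path-based gluing for the reverse inclusion---is the natural one and is very likely close to what is done there.

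Two substantive points. First, your base case $\alpha=0$ does not work as written: pulling back a $\sigma$-discrete open base of $Y$ yields a $\sigma$-sfd family of \emph{open} sets in $X$, not of functionally ambiguous class~$0$ (i.e.\ functionally clopen) sets, and the observation you quote from Section~2 concerns metrizable spaces whereas $X$ here is an arbitrary topological space. In fact, for connected $X$ the only functionally clopen sets are $\emptyset$ and $X$, so with the paper's literal definition of $\Sigma_0^f$ the equality ${\rm B}_0=\Sigma_0^f$ already fails for any non-constant continuous map. The paper's ``evident'' thus seems to be a convention mismatch with its own stated definition, and you have inherited the same slip. Second, in the reverse inclusion your estimate $d_Y(f_m(x),f(x))<1/m$ is \emph{uniform} convergence, and that is the mode you should invoke: concluding $f\in{\rm B}_\alpha$ from $f_m\in{\rm B}_\alpha$ then requires closure of ${\rm B}_\alpha(X,Y)$ under uniform limits for this class of $Y$ (the paper itself uses exactly this fact, citing \cite{KarlovaMykhaylyuk:UMZh:2006}, in the proof of Theorem~\ref{thm:barely_is_Baire1}). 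Your phrase ``pointwise (and, with care, stably)'' is misleading here, since mere pointwise convergence of class-$\alpha$ maps yields only class $\alpha+1$.
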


\begin{proof}
   For  $\alpha=0$ the statement is evident. The equality (i) for $\alpha=1$ was proved in \cite[Theorem 5]{Karlova:EJM:2015}. This fact and \cite[Theorem 22]{Karlova:TA:2015} imply the equalities  (i) and (ii) for all $\alpha>1$.
\end{proof}

A topological space  is called {\it perfect} if any its open subset is $F_\sigma$.

\section{An ''$\varepsilon-\delta$'' characterization of $\sigma$-discrete maps}\label{sec:eps-delta}

\begin{lem}\label{lemma:criterium_Leb}
 Let $\alpha\in[1,\omega_1)$, $X$ be a topological space and $(Y,d)$ be a metric space. For a map $f:X\to Y$ the following conditions are equivalent:
  \begin{enumerate}
    \item\label{it:lemma:crit_leb:1} $f\in \Sigma_\alpha^{f}(X,Y)$;

    \item\label{it:lemma:crit_leb:2} for any $\varepsilon>0$ there exists a $\sigma$-sfd family $\mathcal A_\varepsilon$ which consists of functionally ambiguous sets of the class $\alpha$  in $X$ such that $X=\bigcup\mathcal A_\varepsilon$ and ${\rm diam}f(A)<\varepsilon$ for every $A\in\mathcal A_\varepsilon$.
   \end{enumerate}
\end{lem}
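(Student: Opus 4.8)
The plan is to establish the two implications directly, using only that the base in~(\ref{it:lemma:crit_leb:1}) and the covers in~(\ref{it:lemma:crit_leb:2}) are drawn from the \emph{same} reservoir of functionally ambiguous sets of the $\alpha$'th class: one passes from a base to an $\varepsilon$-cover by retaining those base members whose image has diameter $<\varepsilon$, and back by taking the union over $\varepsilon=\tfrac1n$. The metric on $Y$ enters only through the diameters of balls.

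For (\ref{it:lemma:crit_leb:1})$\Rightarrow$(\ref{it:lemma:crit_leb:2}), let $\mathcal B=\bigcup_{n=1}^\infty\mathcal B_n$ be a $\sigma$-sfd base for $f$ consisting of functionally ambiguous sets of the class $\alpha$, with each $\mathcal B_n$ an sfd family, and fix $\varepsilon>0$. I would put $\mathcal A_\varepsilon=\{B\in\mathcal B:{\rm diam}\,f(B)<\varepsilon\}$. Each of its members is functionally ambiguous of the class $\alpha$ by construction, and $\mathcal A_\varepsilon=\bigcup_{n}\{B\in\mathcal B_n:{\rm diam}\,f(B)<\varepsilon\}$ is $\sigma$-sfd because any subfamily of an sfd family is sfd. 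The point that needs an argument is that $\mathcal A_\varepsilon$ covers $X$: given $x\in X$, apply the base property to the open ball $V=\{y\in Y:d(y,f(x))<\varepsilon/3\}$ to get $B\in\mathcal B$ with $x\in B\subseteq f^{-1}(V)$; then $f(B)\subseteq V$, so ${\rm diam}\,f(B)\le\tfrac23\varepsilon<\varepsilon$ and hence $x\in B\in\mathcal A_\varepsilon$.

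For (\ref{it:lemma:crit_leb:2})$\Rightarrow$(\ref{it:lemma:crit_leb:1}), apply the hypothesis with $\varepsilon=\tfrac1n$ to obtain, for every $n\in\mathbb N$, a $\sigma$-sfd family $\mathcal A_{1/n}$ of functionally ambiguous sets of the class $\alpha$ with $X=\bigcup\mathcal A_{1/n}$ and ${\rm diam}\,f(A)<\tfrac1n$ for all $A\in\mathcal A_{1/n}$. I would set $\mathcal B=\bigcup_{n=1}^\infty\mathcal A_{1/n}$; a countable union of $\sigma$-sfd families is again $\sigma$-sfd and all members are functionally ambiguous of the class $\alpha$, so it only remains to verify that $\mathcal B$ is a base for $f$. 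Given an open $V\subseteq Y$ and $x\in f^{-1}(V)$, choose $r>0$ with $\{y\in Y:d(y,f(x))<r\}\subseteq V$ and then $n$ with $\tfrac1n<r$, and pick $A\in\mathcal A_{1/n}$ with $x\in A$. Since $f(x)\in f(A)$ and ${\rm diam}\,f(A)<\tfrac1n<r$, every point of $f(A)$ lies within $r$ of $f(x)$, so $f(A)\subseteq V$, i.e.\ $x\in A\subseteq f^{-1}(V)$; thus $f^{-1}(V)$ is the union of those $A\in\mathcal B$ contained in it.

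I do not anticipate a real obstacle. The only auxiliary facts used are that a subfamily of an sfd family is sfd and that a countable union of $\sigma$-sfd families is $\sigma$-sfd, both immediate from the definition by restricting, respectively reindexing, the witnessing discrete families of functionally open sets; everything else is the elementary ball estimate above.
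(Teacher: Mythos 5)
Your proof is correct and follows essentially the same route as the paper's: in (\ref{it:lemma:crit_leb:1})$\Rightarrow$(\ref{it:lemma:crit_leb:2}) the paper selects the base members contained in preimages of balls of diameter $<\varepsilon$, which is only a cosmetic variant of your criterion ${\rm diam}\,f(B)<\varepsilon$, and the (\ref{it:lemma:crit_leb:2})$\Rightarrow$(\ref{it:lemma:crit_leb:1}) direction (union over $\varepsilon=\tfrac1n$ and the ball estimate for the base property) is identical. No gaps.
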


\begin{proof}
{\bf (\ref{it:lemma:crit_leb:1})$\Rightarrow$(\ref{it:lemma:crit_leb:2}).} Fix $\varepsilon>0$. Let $\mathcal B$ be a  $\sigma$-sfd base for $f$ which consists of functionally ambiguous sets of the class $\alpha$  in $X$. Consider a cover $\mathcal V$ of the space $Y$ by open balls of diameters  $<\varepsilon$. Then the family
$$\mathcal A_\varepsilon=\{B\in\mathcal B: B\subseteq f^{-1}(V)\,\,\,\mbox{for some}\,\,\, V\in\mathcal V\}$$ is required.

{\bf (\ref{it:lemma:crit_leb:2})$\Rightarrow$(\ref{it:lemma:crit_leb:1}).} For every $n\in\mathbb N$ we choose a $\sigma$-sfd family $\mathcal A_n$ which consists of functionally ambiguous sets of the class $\alpha$ in $X$ such that $X=\bigcup\mathcal A_n$ and ${\rm diam}f(A)< \frac 1n$ for every $A\in\mathcal A_n$. Denote $\mathcal B=\bigcup\limits_{n} \mathcal A_{n}$. Then  $\mathcal B$ is a $\sigma$-sfd family of functionally ambiguous sets of the class  $\alpha$ and $\mathcal B$ is a base for $f$, since $f^{-1}(V)=\bigcup\{B\in\mathcal B: B\subseteq f^{-1}(V)\}$ for any open set $V\subseteq Y$.
\end{proof}

By $\tau(X)$ we denote the topology of a topological space $X$. A multi-valued map \mbox{$U:X\to \tau(X)$} is called {\it a neighborhood-map} if \mbox{$x\in U(x)$} for every $x\in X$.

 Let $\mathcal A$ and $\mathcal B$ be families of sets. We write $\mathcal A\prec\mathcal B$ if for every set $A\in\mathcal A$ there exists a set $B\in\mathcal B$ such that $A\subseteq B$.

\begin{lem}\label{lem:neighborhood_map}
  Let $X$ be a topological space and $\mathcal A$ is a $\sigma$-sfd cover of  $X$ by functionally closed maps. Then
  \begin{enumerate}
  \item\label{it:lem:neighborhood_map:1} there exists a neighborhood-map $U:X\to\tau (X)$ such that for all  $x,y\in X$
  \begin{gather}\label{gath:neighborhood_map_lem}
    x,y\in U(x)\cap U(y) \,\,\Longrightarrow\,\, x,y\in A\,\,\mbox{for some}\,\, A\in\mathcal A;
    \end{gather}
  \item\label{it:lem:neighborhood_map:2} if $X$ is metrizable and  $d_X$ is a metric on $X$ which generates its topological structure, then there exists a function $\delta\in{\rm B}_1(X,\mathbb R^+)$ such that for all $x,y\in X$
  \begin{gather}\label{gath:delta_lemma}
    d_X(x,y)<\min\{\delta(x),\delta(y)\} \,\,\Longrightarrow\,\, x,y\in A\,\,\mbox{for some}\,\, A\in\mathcal A.
  \end{gather}
  \end{enumerate}
\end{lem}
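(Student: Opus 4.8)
The plan is to construct a single neighbourhood-map $U$ that works for \eqref{it:lem:neighborhood_map:1}, and then, in the metrizable case, to read $\delta$ off from $U$ by taking distances to complements, the only real work being to see that the resulting $\delta$ is of the first Baire class.

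First I would fix a representation $\mathcal A=\bigcup_{n}\mathcal A_n$ with each $\mathcal A_n=(A^n_i:i\in I_n)$ strongly functionally discrete, witnessed by a discrete family $(U^n_i:i\in I_n)$ of functionally open sets with $A^n_i\subseteq U^n_i$. Two elementary observations drive everything: for fixed $n$ the sets $U^n_i$ are pairwise disjoint, hence so are the $A^n_i$; and $F_n:=\bigcup\mathcal A_n$ is closed, being a locally finite union of (functionally) closed sets. Since $\mathcal A$ covers $X$, for every $x$ the level $m(x):=\min\{n:x\in F_n\}$ is defined, and there is a unique index $i(x)\in I_{m(x)}$ with $x\in A^{m(x)}_{i(x)}$. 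I then set
\[
U(x)=U^{m(x)}_{i(x)}\cap\bigcap_{k<m(x)}\bigl(X\setminus F_k\bigr),
\]
an open neighbourhood of $x$ (a finite intersection of open sets each containing $x$). To check \eqref{gath:neighborhood_map_lem}, take $x,y$ with $x,y\in U(x)\cap U(y)$, so in particular $x\in U(y)$ and $y\in U(x)$. If $m(x)\ne m(y)$, say $m(x)<m(y)$, then $x\in U(y)\subseteq X\setminus F_{m(x)}$ contradicts $x\in A^{m(x)}_{i(x)}\subseteq F_{m(x)}$; hence $m(x)=m(y)=:m$. If moreover $i(x)\ne i(y)$, then $x\in U(y)\subseteq U^m_{i(y)}$ together with $x\in A^m_{i(x)}\subseteq U^m_{i(x)}$ forces $x\in U^m_{i(x)}\cap U^m_{i(y)}=\emptyset$; hence $i(x)=i(y)$, and $A^m_{i(x)}\in\mathcal A$ contains both $x$ and $y$. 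This gives \eqref{it:lem:neighborhood_map:1}.

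For \eqref{it:lem:neighborhood_map:2}, assume $X$ metrizable with compatible metric $d_X$, and put (with the convention $d_X(\cdot,\emptyset)=+\infty$, and no ``$F_k$-terms'' when $m(x)=1$)
\[
\delta(x)=\tfrac12\min\Bigl\{1,\;d_X\bigl(x,\,X\setminus U^{m(x)}_{i(x)}\bigr),\;d_X(x,F_1),\;\dots,\;d_X(x,F_{m(x)-1})\Bigr\}.
\]
Since $x$ lies in the open set $U^{m(x)}_{i(x)}$ and outside each closed set $F_k$ with $k<m(x)$, we have $0<\delta(x)\le\tfrac12$; and if $d_X(x,y)<\min\{\delta(x),\delta(y)\}$ then $d_X(x,y)$ is strictly below $d_X\bigl(x,X\setminus U^{m(x)}_{i(x)}\bigr)$ and below each $d_X(x,F_k)$ with $k<m(x)$, so $y\in U(x)$; symmetrically $x\in U(y)$, and \eqref{gath:delta_lemma} follows from \eqref{gath:neighborhood_map_lem}.

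It remains to see that $\delta\in{\rm B}_1(X,\mathbb R^+)$, which is the crux. I would show that $h:=2\delta$ is upper semicontinuous. Let $x_j\to x_0$ and put $n_0=m(x_0)$. Only finitely many $j$ can satisfy $m(x_j)<n_0$ (otherwise some closed $F_k$ with $k<n_0$ would contain $x_0$, contradicting $m(x_0)=n_0$), so discard them. For $j$ with $m(x_j)=n_0$, eventually $x_j$ lies in the open set $U^{n_0}_{i(x_0)}$, hence $i(x_j)=i(x_0)$ by disjointness, and then $h(x_j)$ is the minimum of finitely many $1$-Lipschitz functions of $x_j$ converging to $h(x_0)$. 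For $j$ with $m(x_j)>n_0$, the term $d_X(x_j,F_{n_0})$ appears in $h(x_j)$, so $h(x_j)\le d_X(x_j,F_{n_0})\to d_X(x_0,F_{n_0})=0$. Passing to arbitrary subsequences and combining the two cases yields $\limsup_j h(x_j)\le h(x_0)$, so $h$ is upper semicontinuous and bounded. On a metric space such a function is a decreasing pointwise limit of the Lipschitz functions $h_m(x)=\sup_{z\in X}\bigl(h(z)-m\,d_X(x,z)\bigr)$, which are moreover positive since $h_m\ge h>0$; hence $h$, and therefore $\delta=h/2$, belongs to the first Baire class with values in $\mathbb R^+$. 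The one delicate point is exactly this upper semicontinuity: the formula for $\delta$ changes with the capture level $m(\cdot)$, and one must verify that across the jumps the value can only drop — which is where the closedness of the $F_k$ and the discreteness of the families $(U^n_i)$ are used.
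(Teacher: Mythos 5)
Your proof is correct. The construction of $U$ rests on the same basic idea as the paper's --- stratify $X$ by the first level $n$ at which a point is captured by $F_n=\bigcup\mathcal A_n$, use discreteness to isolate the unique capturing set at that level, and arrange $U$ so that mutual membership forces equal levels and then equal sets --- but you execute it differently at two points. First, the paper begins by invoking \cite[Lemma~13]{Karlova:TA:2015} to replace the given decomposition by a nested refinement with $\mathcal A_n\prec\mathcal A_{n+1}$, which is what makes its simpler formula $U(x)=X\setminus(\bigcup\mathcal A_{n(x)}\setminus A(x))$ work; you avoid that external lemma entirely by excising the lower strata $F_1,\dots,F_{m(x)-1}$ from $U(x)$ by hand, which keeps the argument self-contained at the cost of a slightly longer case analysis (and your $U(x)$ uses the witnessing discrete open family $U^n_i$, whereas the paper only needs discreteness of the closed sets themselves). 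Second, and more substantially, for the Baire-one-ness of $\delta$ the paper argues by induction along the increasing closed cover $(F_n)$, showing each restriction $\delta|_{F_n}$ is Baire one and then gluing; you instead prove that $2\delta$ is a positive, bounded, upper semicontinuous function and invoke the classical fact that such a function on a metric space is the decreasing pointwise limit of the positive Lipschitz functions $h_m(x)=\sup_{z}\bigl(h(z)-m\,d_X(x,z)\bigr)$. Your case analysis for upper semicontinuity (finitely many indices below level $n_0$; convergence of the explicit minimum on the level-$n_0$ subsequence; the term $d_X(\cdot,F_{n_0})$ forcing the value to $0$ on higher levels) is sound. Both routes are valid; yours trades the closed-cover gluing argument for the u.s.c.\ approximation theorem and yields the extra, not strictly needed, information that $\delta$ may be chosen upper semicontinuous.
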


\begin{proof}
  According to \cite[Lemma 13]{Karlova:TA:2015} there exists a sequence  $(\mathcal A_n)_{n=1}^\infty$ of sfd families of functionally closed sets in  $X$ such that $\bigcup\limits_{n=1}^\infty \mathcal A_n\prec\mathcal A$,  $\bigcup\bigcup\limits_{n=1}^\infty \mathcal A_n=X$ and $\mathcal A_{n}\prec\mathcal A_{n+1}$ for every $n\in\mathbb N$.

 Let $x\in X$. Denote $n(x)=\min\{n:x\in \bigcup\mathcal A_n\}$. Since the family $\mathcal A_{n(x)}$ is disjoint, there exists a unique set $A(x)\in\mathcal A_{n(x)}$ such that $x\in A(x)$. We put
   $$
  U(x)=X\setminus (\bigcup \mathcal A_{n(x)}\setminus A(x)).
  $$
Notice that the set $U(x)$ is open, since the family $\mathcal A_{n(x)}$ is discrete.

 In the case (\ref{it:lem:neighborhood_map:2}) for every $x\in X$ we put
  $$
 \delta(x)=d_X(x,X\setminus U(x)).
  $$

  We show that the neighborhood-map  $U:X\to\tau (X)$ in the case (\ref{it:lem:neighborhood_map:1}) and the function $\delta:X\to\mathbb R^+$ in the case  (\ref{it:lem:neighborhood_map:2}) satisfy the requirements of the lemma.

   Let $x,y\in U(x)\cap U(y)$. Assume that $n(x)\le n(y)$ and choose a set $B\in A_{n(y)}$ such that $A(x)\subseteq B$. Since  $x\in U(y)$,  $x\in A(y)$. It follows that $B=A(y)$. Since $\bigcup\limits_{n=1}^\infty \mathcal A_n\prec\mathcal A$,  there exists a set  $A\in\mathcal A$ with  $A(y)\subseteq A$. Then $x,y\in A$.

Observe that in the case~(\ref{it:lem:neighborhood_map:2}) for all $x,y\in X$ the inequality $d_X(x,y)<\min\{\delta(x),\delta(y)\}$ implies that $x,y\in U(x)\cap U(y)$. Hence, we get (\ref{gath:delta_lemma}).

It remains to prove that $\delta$ is of the first Baire class. For every $n\in\mathbb N$ we denote
$$
F_n=\cup\mathcal A_n\quad\mbox{and}\quad \delta_n=\delta|_{F_n}.
$$
Since
$$
\delta_1(x)=d_X(x,F_1\setminus A)
$$
for $x\in A$, the restriction $\delta_1|_A$ is continuous for every $A\in\mathcal A_1$. Then the function $\delta_1:F_1\to\mathbb R^+$ is continuous, since the family $\mathcal A_1$ is discrete and consists of closed sets. Assume that each function $\delta_1$,\dots,$\delta_n$ belongs to the first Baire class for some $n\ge 1$. Fix $A\in\mathcal  A$ and notice that for all $x\in A$ we have
$$
\delta_{n+1}(x)=\left\{\begin{array}{ll}
                     \delta_n(x), & \mbox{if}\,\, x\in B\,\,\mbox{for some}\,\, B\in\mathcal A_n, \\
                     d_X(x,F_{n+1}\setminus A), & \mbox{otherwise}.
                   \end{array}
\right.
$$
Taking into account the inductive assumption and the closedness of $B$, we obtain that $\delta_{n+1}|_A$ is a Baire-one function. Since the family $\mathcal A_2$ is discrete,  $\delta_{n+1}\in{\rm B}_1(F_{n+1},\mathbb R^+)$.

 Since the family $(F_n:n\in\mathbb N)$ is a closed cover of  $X$ such that every restriction $\delta|_{F_n}$ is of the first Baire class,  $\delta\in{\rm B}_1(X,\mathbb R^+)$.
\end{proof}

\begin{lem}\label{lem:sigma_implies_ed}
  Let $X$ be a topological space, $(Y,d_Y)$ be a metric space and $f\in\Sigma_1^f(X,Y)$. Then for any $\varepsilon>0$ there exists
   \begin{enumerate}
   \item\label{it:lem:sigma_implies_ed:1}  a neighborhood-map $U:X\to\tau(X)$ such that for all $x,y\in X$
  \begin{equation}\label{eq:2}
  x,y\in U(x)\cap U(y)\,\,\,\Longrightarrow \,\,\, d_Y(f(x),f(y))<\varepsilon;
  \end{equation}

  \item\label{it:lem:sigma_implies_ed:2}  a function $\delta\in{\rm B}_1(X,\mathbb R^+)$ such that for all  $x,y\in X$
  \begin{gather}\label{gath:delta_lemma_tata}
    d_X(x,y)<\min\{\delta(x),\delta(y)\} \,\,\Longrightarrow\,\, d_Y(f(x),f(y))<\varepsilon,
  \end{gather}
if $X$ is metrizable and $d_X$ is a metric on $X$ which generates its topological structure.
  \end{enumerate}
\end{lem}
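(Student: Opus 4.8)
The plan is to reduce Lemma~\ref{lem:sigma_implies_ed} to Lemma~\ref{lem:neighborhood_map} by manufacturing, for a prescribed $\varepsilon>0$, a $\sigma$-sfd cover of $X$ by \emph{functionally closed} sets whose $f$-images all have diameter $<\varepsilon$. First I would invoke Lemma~\ref{lemma:criterium_Leb} with $\alpha=1$: since $f\in\Sigma_1^f(X,Y)$, it produces a $\sigma$-sfd family $\mathcal A_\varepsilon=\bigcup_{n}\mathcal A_n$, with each $\mathcal A_n$ an sfd family of functionally ambiguous sets of the first class, such that $X=\bigcup\mathcal A_\varepsilon$ and ${\rm diam}\,f(A)<\varepsilon$ for every $A\in\mathcal A_\varepsilon$.

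The members of $\mathcal A_\varepsilon$ need not be functionally closed, so the next step is a refinement. Since a functionally ambiguous set of the first class lies in $\mathcal A_1(X)$, each $A\in\mathcal A_\varepsilon$ can be written as a countable union $A=\bigcup_{k}F_{A,k}$ of functionally closed sets. I would then put $\mathcal B=\{F_{A,k}:A\in\mathcal A_\varepsilon,\ k\in\mathbb N\}$ and verify the required properties: $\mathcal B$ covers $X$; its members are functionally closed; ${\rm diam}\,f(F_{A,k})\le{\rm diam}\,f(A)<\varepsilon$; and $\mathcal B$ is $\sigma$-sfd. For the last point, fix $n,k\in\mathbb N$ and let $(W_A:A\in\mathcal A_n)$ be the discrete family of functionally open sets witnessing that $\mathcal A_n$ is sfd, so $\overline A\subseteq W_A$; because $F_{A,k}\subseteq A$ is closed we get $\overline{F_{A,k}}=F_{A,k}\subseteq W_A$, hence $(F_{A,k}:A\in\mathcal A_n)$ is sfd, and $\mathcal B$ is the countable union over $n$ and $k$ of these sfd families.

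Finally I would feed $\mathcal B$ into Lemma~\ref{lem:neighborhood_map}. In case (\ref{it:lem:sigma_implies_ed:1}), part~(\ref{it:lem:neighborhood_map:1}) of that lemma gives a neighborhood-map $U:X\to\tau(X)$ with the property that $x,y\in U(x)\cap U(y)$ forces $x,y\in B$ for some $B\in\mathcal B$; since ${\rm diam}\,f(B)<\varepsilon$, this yields $d_Y(f(x),f(y))<\varepsilon$, i.e.\ (\ref{eq:2}). In case (\ref{it:lem:sigma_implies_ed:2}), when $X$ is metrizable with metric $d_X$, part~(\ref{it:lem:neighborhood_map:2}) of Lemma~\ref{lem:neighborhood_map} produces $\delta\in{\rm B}_1(X,\mathbb R^+)$ with the analogous implication, and the same diameter estimate gives (\ref{gath:delta_lemma_tata}).

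As for the main obstacle: essentially the whole content is already packaged inside Lemmas~\ref{lemma:criterium_Leb} and~\ref{lem:neighborhood_map}, so the only point needing care is checking that splitting an sfd family of ambiguous sets into its functionally closed pieces preserves the $\sigma$-sfd property. Since an sfd family is controlled by a single discrete family of functionally open ``hulls'' and passing to subsets only shrinks closures, this bookkeeping is routine, and I do not expect a genuine difficulty beyond it.
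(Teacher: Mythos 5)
Your proof is correct and follows essentially the same route as the paper: apply Lemma~\ref{lemma:criterium_Leb} to get a $\sigma$-sfd cover with small $f$-image diameters and then feed it into Lemma~\ref{lem:neighborhood_map}. The only difference is that you explicitly carry out the refinement of the functionally ambiguous sets into functionally closed pieces (correctly checking that the sfd property survives), a step the paper's proof glosses over by asserting that Lemma~\ref{lemma:criterium_Leb} already yields functionally closed sets.
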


\begin{proof}
  Fix $\varepsilon>0$. Applying Lemma~\ref{lemma:criterium_Leb} we may choose a $\sigma$-sfd family $\mathcal A$ which consists of functionally closed sets such that $X=\bigcup\mathcal A$ and ${\rm diam}f(A)<\varepsilon$ for each $A\in\mathcal A$. Lemma~\ref{lem:neighborhood_map} implies that there exists a neighborhood-map  $U:X\to\tau (X)$ such that (\ref{gath:neighborhood_map_lem}) holds in the case (\ref{it:lem:sigma_implies_ed:1} ) and there exists a function $\delta\in{\rm B}_1(X,\mathbb R^+)$ which satisfies (\ref{gath:delta_lemma}) у випадку (\ref{it:lem:sigma_implies_ed:2}). Therefore, if  $x,y\in U(x)\cap U(y)$  in the case (\ref{it:lem:sigma_implies_ed:1}) or  $d_X(x,y)<\min\{\delta(x),\delta(y)\}$  in the case (\ref{it:lem:sigma_implies_ed:2}), then there exists a set $A\in\mathcal A$ such that $x,y\in A$. Then $d_Y(f(x),f(y))\le{\rm diam}f(A)<\varepsilon$.
\end{proof}

\begin{lem}\label{lem:ed_implies_sigma}
    Let $(X,d_X)$, $(Y,d_Y)$ be metric spaces and $f:X\to Y$ be a map. If for any $\varepsilon>0$ there exists a function $\delta:X\to \mathbb R^+$ such that for all $x,y\in X$ the inequality $d_Y(f(x),f(y))<\varepsilon$ holds whenever  $ d_X(x,y)<\min\{\delta(x),\delta(y)\}$, then  $f\in \Sigma_1^{f}(X,Y)$.
\end{lem}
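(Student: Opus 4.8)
The plan is to reverse the construction from Lemma~\ref{lem:sigma_implies_ed} and Lemma~\ref{lem:neighborhood_map}: from the pointwise gauge $\delta$ manufacture, for each $\varepsilon>0$, a $\sigma$-sfd cover of $X$ by functionally ambiguous (indeed functionally closed/open) sets of small $f$-image, and then invoke Lemma~\ref{lemma:criterium_Leb} to conclude $f\in\Sigma_1^f(X,Y)$. First I would fix $\varepsilon>0$, apply the hypothesis to get $\delta:X\to\mathbb R^+$ with the implication $d_X(x,y)<\min\{\delta(x),\delta(y)\}\Rightarrow d_Y(f(x),f(y))<\varepsilon$, and stratify $X$ by the level sets $X_n=\{x\in X:\delta(x)>1/n\}$, so that $X=\bigcup_n X_n$ and $X_n$ increases with $n$. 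On each $X_n$ the gauge is bounded below, which gives a uniform separation: if $x,y\in X_n$ and $d_X(x,y)<1/n$ then $f(x)$ and $f(y)$ are $\varepsilon$-close.

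The next step is to chop each $X_n$ into pieces of diameter less than $1/n$ in a way that is simultaneously $\sigma$-discrete and functionally ambiguous. Since $X$ is metric, I would take a $\sigma$-discrete open refinement of the cover of $X$ by open balls of radius $1/(3n)$ (such a refinement exists by the standard Stone/Bing machinery for metric spaces), restrict it to $X_n$, and pass to closures inside $X_n$; each resulting set has $d_X$-diameter less than $1/n$ and hence $f$-image of diameter at most $\varepsilon$ by the previous paragraph. Taking the union over all $n$ and all members of these countably many discrete families yields a countable family $\mathcal A_\varepsilon$ with $X=\bigcup\mathcal A_\varepsilon$ and $\operatorname{diam}f(A)\le\varepsilon$ for every $A\in\mathcal A_\varepsilon$. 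In a metrizable space every discrete family is strongly functionally discrete (as the excerpt notes), and closed balls / closed sets in metric spaces are functionally closed while open sets are functionally open, so the members can be arranged to be functionally ambiguous of class $1$; this is exactly condition~(\ref{it:lemma:crit_leb:2}) of Lemma~\ref{lemma:criterium_Leb} with $\alpha=1$, giving $f\in\Sigma_1^f(X,Y)$.

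The main obstacle I expect is the bookkeeping needed to make the pieces genuinely functionally \emph{ambiguous} of the first class while keeping the family $\sigma$-discrete \emph{in $X$} (not merely in the subspace $X_n$, which need not be closed or open in $X$). The point to be careful about is that $X_n=\{\delta>1/n\}$ is only an arbitrary set, so a family discrete in $X_n$ need not be discrete in $X$; one fixes this by observing that, after shrinking, the closures of the chosen pieces can be taken inside the \emph{open} $1/(3n)$-enlargement of $X_n$ and to be separated there, and since the refinement was already $\sigma$-discrete in $X$ the restriction remains $\sigma$-discrete in $X$. Alternatively, one can sidestep the issue entirely by applying the hypothesis along a countable dense set or by using the gauge to define, for each $x$, the functionally open ball $B(x,\delta(x)/2)$ and extracting a $\sigma$-discrete refinement of $\{B(x,\delta(x)/2):x\in X\}$; two points lying in a common member of such a refinement of mesh-controlled sets are then automatically $\varepsilon$-close in $f$. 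Either route reduces the lemma to Lemma~\ref{lemma:criterium_Leb}, and no further computation is needed.
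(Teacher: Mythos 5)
Your overall strategy is the same as the paper's (stratify $X$ by the level sets $X_n=\{x:\delta(x)>1/n\}$, intersect with a $\sigma$-discrete closed cover of mesh $<1/(3n)$, and feed the resulting family into Lemma~\ref{lemma:criterium_Leb}), but the one step that carries all the difficulty is left unproved, and the fixes you sketch for it do not work. The dilemma is this: if you take relative closures inside $X_n$, the pieces stay inside $X_n$ and the diameter estimate is immediate, but $X_n$ is a completely arbitrary subset of $X$ (nothing is assumed about $\delta$), so a relatively closed subset of $X_n$ need not be functionally ambiguous of the first class in $X$ --- it need not even be Borel. If instead you take closures in $X$ (i.e.\ work with $H\cap\overline{X_n}$, as the paper does), the pieces are closed, hence functionally closed and $\sigma$-discrete in $X$ with no bookkeeping needed --- discreteness was never the real obstacle --- but your claim that their $f$-images have diameter $\le\varepsilon$ ``by the previous paragraph'' is false as stated: a point of $\overline{X_n}\setminus X_n$ can have $\delta$ arbitrarily small, so the implication $d_X(x,y)<1/n<\min\{\delta(x),\delta(y)\}$ is unavailable for such points. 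Your ``alternative route'' via the balls $B(x,\delta(x)/2)$ fails for the same underlying reason: two points $y,z$ of such a ball satisfy $d_X(y,z)<\delta(x)$, but the hypothesis requires $d_X(y,z)<\min\{\delta(y),\delta(z)\}$, with the gauge evaluated at $y$ and $z$ rather than at the centre, so they are not ``automatically $\varepsilon$-close in $f$''.

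The missing idea, which is the actual content of the paper's proof, is an approximation argument: start from the hypothesis with $\varepsilon/4$ instead of $\varepsilon$; given $x,y\in H\cap\overline{X_n}$ with ${\rm diam}\,H<1/(3n)$, use $\delta(x),\delta(y)>0$ to pick $k>3n$ with $x,y\in X_k$, then pick $x_1,y_1\in X_n$ with $d_X(x,x_1)<1/k$ and $d_X(y,y_1)<1/k$ (possible since $x,y\in\overline{X_n}$). Each of the three pairs $(x,x_1)$, $(x_1,y_1)$, $(y_1,y)$ then satisfies the gauge condition --- e.g.\ $d_X(x,x_1)<1/k<\min\{\delta(x),\delta(x_1)\}$ because $x\in X_k$ and $x_1\in X_n\subseteq X_k$ --- and the triangle inequality gives $d_Y(f(x),f(y))<3\varepsilon/4<\varepsilon$. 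Without this chaining step (and the accompanying replacement of $\varepsilon$ by $\varepsilon/4$ at the outset) your proof does not close.
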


\begin{proof}
 We show that the condition the condition (\ref{it:lemma:crit_leb:2}) of Lemma~\ref{lemma:criterium_Leb} holds.
Fix $\varepsilon>0$ and take a function $\delta:X\to \mathbb R^+$ such that for all  $x,y\in X$
  \begin{gather*}
    d_X(x,y)<\min\{\delta(x),\delta(y)\}\,\,\,\Rightarrow\,\,\, d_Y(f(x),f(y))<\frac{\varepsilon}{4}.
  \end{gather*}
  Now for every $n\in\mathbb N$ we choose a $\sigma$-discrete cover $\mathcal H_n$  of $X$ by closed sets of diameters $<\frac{1}{3n}$ and denote  $D_n=\{x\in X: \delta(x)>\frac 1n\}$. Let $\mathcal H=\bigcup\limits_{m}\mathcal H_{mn}$, where each family $\mathcal H_{mn}$ is discrete in $X$. We put
  $$
  \mathcal A_{mn}=\{H\cap\overline{D_n}:H\in\mathcal H_{mn}\}\quad\mbox{and}\quad \mathcal A=\bigcup\limits_{n=1}^\infty\bigcup\limits_{m=1}^\infty\mathcal A_{mn}.
  $$
 Then the family $\mathcal A$ forms a cover of $X$ and each family  $\mathcal A_{mn}$ is discrete.

Let  $m,n\in\mathbb N$ and  $x,y\in A\in\mathcal A_{mn}$. We choose a number $k>3n$ with $x,y\in D_k$. Since $x,y\in\overline{D_n}$, there exist $x_1,y_1\in   D_n$ such that $d_X(x,x_1)<\frac 1k$ and $d_X(y,y_1)<\frac 1k$. Moreover, since $x,y\in H\in\mathcal H_{mn}$, we have $d_X(x,y)<\frac{1}{3n}$, which implies $d_X(x_1,y_1)<\frac 1n$. Therefore,
\begin{gather*}
  d_Y(f(x),f(y))\le d_Y(f(x),f(x_1))+d_Y(f(x_1),f(y_1))+d_Y(f(y_1),f(y))<\frac{\varepsilon}{4}+\frac{\varepsilon}{4}+\frac{\varepsilon}{4}=\frac{3\varepsilon}{4}.
\end{gather*}
Hence, ${\rm diam}\, f(A)\le\tfrac{3\varepsilon}{4}<\varepsilon$.
\end{proof}

Let $\alpha\in[0,\omega_1)$. A map  $f:X\to Y$ is said to be {\it $\mathcal M_\alpha$-measurable}, if $f^{-1}(B)\in \mathcal M_\alpha(X)$ for any  $B\in\mathcal M_{\alpha}(Y)$.

The following fact immediately implies from the definition of $\mathcal M_\alpha$-measurable map.

\begin{prop} Let $\alpha\in[0,\omega_1)$. For a map $f:X\to Y$ the following conditions are equivalent:
\begin{enumerate}
  \item $f$ is $\mathcal M_\alpha$-measurable;

  \item $f^{-1}(B)\in \mathcal A_\alpha(X)$ for any $B\in\mathcal A_{\alpha}(Y)$;

  \item the preimage $f^{-1}(B)$ of any functionally ambiguous set $B$ of the class  ${\alpha}$ in $Y$ is a functionally ambiguous set of the class  $\alpha$ in $X$.
\end{enumerate}
\end{prop}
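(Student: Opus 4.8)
The plan is to prove the equivalence of the three stated conditions for an $\mathcal M_\alpha$-measurable map by unwinding the definitions together with the elementary duality between the functionally multiplicative and additive classes. First I would recall that each functionally additive set of class $\alpha$ is, by definition, the complement of a functionally multiplicative set of the same class (and conversely), and that for any map $f$ one has $f^{-1}(Y\setminus B)=X\setminus f^{-1}(B)$; hence $f^{-1}$ commutes with complementation. This single observation gives the equivalence $(1)\Leftrightarrow(2)$ immediately: if $f^{-1}(B)\in\mathcal M_\alpha(X)$ for every $B\in\mathcal M_\alpha(Y)$, then for $B\in\mathcal A_\alpha(Y)$ we write $B=Y\setminus C$ with $C\in\mathcal M_\alpha(Y)$, so $f^{-1}(B)=X\setminus f^{-1}(C)\in\mathcal A_\alpha(X)$; the reverse implication is symmetric.

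Next I would handle $(1)\wedge(2)\Leftrightarrow(3)$. A functionally ambiguous set $B$ of class $\alpha$ in $Y$ is precisely a set lying in $\mathcal M_\alpha(Y)\cap\mathcal A_\alpha(Y)$. If $f$ satisfies $(1)$ and $(2)$, then for such a $B$ we get $f^{-1}(B)\in\mathcal M_\alpha(X)$ from $(1)$ and $f^{-1}(B)\in\mathcal A_\alpha(X)$ from $(2)$, so $f^{-1}(B)\in\mathcal M_\alpha(X)\cap\mathcal A_\alpha(X)$, which is $(3)$. For the converse, assume $(3)$ and take an arbitrary $B\in\mathcal M_\alpha(Y)$; by the definition of $\mathcal M_\alpha$ one writes $B=\bigcap_{n}A_n$ with each $A_n\in\bigcup_{\beta<\alpha}\mathcal A_\beta(Y)$. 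Each $A_n$, being of some additive class $\beta<\alpha$, is in particular functionally ambiguous of class $\alpha$ (every class-$\beta$ additive set with $\beta<\alpha$ lies in both $\mathcal M_\alpha$ and $\mathcal A_\alpha$), so $(3)$ gives $f^{-1}(A_n)\in\mathcal M_\alpha(X)\cap\mathcal A_\alpha(X)$; in particular $f^{-1}(A_n)\in\mathcal A_\alpha(X)$, hence actually $f^{-1}(A_n)\in\bigcup_{\beta<\alpha}\mathcal A_\beta(X)$ need not hold, but $f^{-1}(A_n)\in\mathcal A_\alpha(X)$ suffices: indeed $\mathcal M_\alpha(X)$ is closed under countable intersections of its own members (as $\mathcal A_\beta\subseteq\mathcal M_\alpha$ for $\beta<\alpha$, and more directly $\mathcal M_\alpha$ is closed under countable intersection), so $f^{-1}(B)=\bigcap_n f^{-1}(A_n)\in\mathcal M_\alpha(X)$, giving $(1)$; then $(2)$ follows from the already-established $(1)\Leftrightarrow(2)$.

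The main subtlety — really the only place one must be slightly careful — is the bookkeeping in the last step: one needs the stability of the classes $\mathcal M_\alpha(X)$ and $\mathcal A_\alpha(X)$ under the relevant countable Boolean operations (countable intersections for $\mathcal M_\alpha$, countable unions for $\mathcal A_\alpha$) and the inclusion $\mathcal A_\beta(X)\cup\mathcal M_\beta(X)\subseteq\mathcal M_\alpha(X)\cap\mathcal A_\alpha(X)$ for all $\beta<\alpha$, which are standard consequences of the inductive definition of these classes. For $\alpha=0$ the statement degenerates ($\mathcal M_0$ = functionally closed, $\mathcal A_0$ = functionally open, $\mathcal M_0\cap\mathcal A_0$ = clopen functionally ambiguous sets) and the three conditions are again trivially equivalent by the same complementation argument. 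Since all of this is purely formal, the proof is the one-line remark that the proposition "immediately implies from the definition", and the write-up consists of spelling out the three short implications above.
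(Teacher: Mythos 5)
Your argument is correct and coincides with what the paper has in mind: the paper offers no proof at all, asserting the proposition is immediate from the definitions, and your write-up is exactly that definitional unwinding (complementation for $(1)\Leftrightarrow(2)$; for $(3)\Rightarrow(1)$, the observation that every set in $\bigcup_{\beta<\alpha}\mathcal A_\beta(Y)$ is functionally ambiguous of class $\alpha$ together with closure of $\mathcal M_\alpha(X)$ under countable intersections --- note it is the membership $f^{-1}(A_n)\in\mathcal M_\alpha(X)$, not $f^{-1}(A_n)\in\mathcal A_\alpha(X)$, that does the work there, a small slip in your wording but not in your argument). One caveat: your closing claim that the three conditions are ``again trivially equivalent'' for $\alpha=0$ is false --- for $\alpha=0$ condition (3) quantifies only over functionally clopen subsets of $Y$ and is vacuous when $Y$ is connected, so $(3)\not\Rightarrow(1)$; the equivalence genuinely requires $\alpha\ge 1$, an imprecision already present in the paper's statement and harmless for its applications.
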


\begin{thm}\label{th:M1_implies_ed}
  Let $X$ be a topological space, $(Y,d_Y)$ be a metric space and $f:X\to Y$ be an $\mathcal M_1$-measurable map from the class $\Sigma^f(X,Y)$. Then for any function  $\varepsilon\in{\rm B}_1(Y,\mathbb R^+)$ there exists a neighborhood-map $U:X\to\tau(X)$ such that for all $x,y\in X$
    \begin{gather*}
      x,y\in U(x)\cap U(y)\,\,\Longrightarrow \,\, d_Y(f(x),f(y))<\min\{\varepsilon(f(x)),\varepsilon(f(y))\}.
    \end{gather*}
\end{thm}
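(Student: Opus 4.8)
The plan is to reduce the statement to Lemma~\ref{lem:sigma_implies_ed} applied to an auxiliary map into a product space, and then to glue the resulting countable family of neighborhood-maps along the level sets of $\varepsilon\circ f$.

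First I would establish that $f\in\Sigma_1^f(X,Y)$ and, setting $g:=\varepsilon\circ f:X\to\mathbb R^+$, also $g\in\Sigma_1^f(X,\mathbb R^+)$. For $f$ this is exactly where both hypotheses on $f$ are used: an $\mathcal M_1$-measurable map of a topological space into a metric space which carries a $\sigma$-sfd base has a $\sigma$-sfd base consisting of functionally ambiguous sets of the first class (the functional form of a Hansell-type structure theorem; cf.\ the remark following Theorem~\ref{th:intro:sigma}). For $g$: since $\varepsilon\in{\rm B}_1(Y,\mathbb R^+)$ we have $\varepsilon^{-1}(V)\in\mathcal A_1(Y)$ for each open $V\subseteq\mathbb R^+$, so $g^{-1}(V)=f^{-1}(\varepsilon^{-1}(V))\in\mathcal A_1(X)$ by $\mathcal M_1$-measurability of $f$; hence $g^{-1}$ of a closed set lies in $\mathcal M_1(X)$, and for rationals $0<a<b$ the set $g^{-1}((a,b))=\bigcap_k g^{-1}([a+\tfrac1k,\,b-\tfrac1k])$ is functionally ambiguous of class $1$. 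The countable (hence $\sigma$-sfd) family $\{g^{-1}((a,b)):a,b\in\mathbb Q,\ 0<a<b\}$ is then a base of $g$ made of such sets, so $g\in\Sigma_1^f(X,\mathbb R^+)$. Finally, intersecting a $\sigma$-sfd base of $f$ with one of $g$ yields a $\sigma$-sfd base of functionally ambiguous class-$1$ sets for $h:=(f,g):X\to Y\times\mathbb R^+$ (using that such sets are stable under finite intersections and that the ``rectangular'' intersection of two sfd families is again sfd); thus $h\in\Sigma_1^f(X,Y\times\mathbb R^+)$, where $Y\times\mathbb R^+$ carries the metric $\rho((y,t),(y',t'))=\max\{d_Y(y,y'),|t-t'|\}$.

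Next, for each $n\in\mathbb N$ I would apply Lemma~\ref{lem:sigma_implies_ed}(\ref{it:lem:sigma_implies_ed:1}) to $h$ with $\varepsilon=\tfrac1{3n}$ to obtain a neighborhood-map $V_n:X\to\tau(X)$ such that $u,v\in V_n(u)\cap V_n(v)$ implies both $d_Y(f(u),f(v))<\tfrac1{3n}$ and $|g(u)-g(v)|<\tfrac1{3n}$. Replacing $V_n$ by $\widetilde V_n(x):=V_1(x)\cap\cdots\cap V_n(x)$ preserves this implication for the index $n$ and makes the sequence decreasing, $\widetilde V_{n+1}(x)\subseteq\widetilde V_n(x)$. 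Now write $X=\bigsqcup_{n\ge1}E_n$, where $E_1=\{x:g(x)\ge1\}$ and $E_n=\{x:\tfrac1n\le g(x)<\tfrac1{n-1}\}$ for $n\ge2$, so that $g(x)\ge\tfrac1n$ on $E_n$, and define $U(x):=\widetilde V_n(x)$ for $x\in E_n$. If $x,y\in U(x)\cap U(y)$ with $x\in E_n$, $y\in E_m$ and $n\le m$, then $x,y\in\widetilde V_n(x)$ and $x,y\in\widetilde V_m(y)\subseteq\widetilde V_n(y)$, hence $x,y\in\widetilde V_n(x)\cap\widetilde V_n(y)$; therefore $d_Y(f(x),f(y))<\tfrac1{3n}$ and $|g(x)-g(y)|<\tfrac1{3n}$. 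Since $\varepsilon(f(x))=g(x)\ge\tfrac1n$ and $\varepsilon(f(y))=g(y)>g(x)-\tfrac1{3n}\ge\tfrac2{3n}$, both of these numbers exceed $\tfrac1{3n}>d_Y(f(x),f(y))$, which gives $d_Y(f(x),f(y))<\min\{\varepsilon(f(x)),\varepsilon(f(y))\}$, as required.

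The routine parts are the product-base computation and the inequalities in the last step; the one ingredient lying outside the $\varepsilon$--$\delta$ machinery of Section~\ref{sec:eps-delta} is the reduction $f\in\Sigma_1^f(X,Y)$ — precisely what the combination of $\mathcal M_1$-measurability and the $\sigma$-sfd hypothesis is meant to provide — and I expect that to be the main point requiring care. Once $f\in\Sigma_1^f(X,Y)$ is in hand, the rest is bookkeeping organized around the level decomposition of $\varepsilon\circ f$ together with Lemma~\ref{lem:sigma_implies_ed}.
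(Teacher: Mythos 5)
Your overall architecture is sound and genuinely different from the paper's. The paper fixes $A_n=f^{-1}(\varepsilon^{-1}((2^{-n},+\infty)))\in\mathcal A_1(X)$, applies the Reduction Theorem to shrink $(A_n)$ to a partition $(B_n)$ by functionally ambiguous sets, separates the $B_n$ by a neighborhood-map via Lemma~\ref{lem:neighborhood_map}, and intersects with the neighborhood-maps supplied by Lemma~\ref{lem:sigma_implies_ed} applied to $f$ alone. You instead pass to the product map $h=(f,\varepsilon\circ f)$, apply Lemma~\ref{lem:sigma_implies_ed} to $h$, and glue along the level sets $E_n$ of $g=\varepsilon\circ f$ using the monotonicity trick $\widetilde V_{n+1}\subseteq\widetilde V_n$; the simultaneous control of $|g(x)-g(y)|$ then transfers the lower bound $\varepsilon(f(x))\ge 1/n$ to $\varepsilon(f(y))$. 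This last computation is correct and lets you dispense with the separating neighborhood-map $V$ entirely, which is a nice simplification of the gluing step.

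There is, however, one step that fails as written: your justification that $g\in\Sigma_1^f(X,\mathbb R^+)$. The identity $g^{-1}((a,b))=\bigcap_k g^{-1}([a+\tfrac1k,b-\tfrac1k])$ is false (the intervals $[a+\tfrac1k,b-\tfrac1k]$ increase to $(a,b)$, so you want a union, which only exhibits the set as a countable union of $\mathcal M_1$-sets), and more importantly the sets $g^{-1}((a,b))$ need not be functionally ambiguous of class $1$ at all: take $X=Y=\mathbb R$, $f=\mathrm{id}$ and $\varepsilon$ the Riemann-type Baire-one function with $\varepsilon(r_n)=1+\tfrac1n$ on an enumeration of $\mathbb Q$ and $\varepsilon=1$ on the irrationals; then $g^{-1}((1,2))=\mathbb Q$, which is $F_\sigma$ but not $G_\delta$. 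So the countable base you exhibit is an $\mathcal A_1$-base, not an ambiguous one, and Lemma~\ref{lemma:criterium_Leb}/Lemma~\ref{lem:sigma_implies_ed} cannot be applied to $h$ yet. The conclusion you need is still true --- $g$ has a countable (hence trivially $\sigma$-sfd) base of $\mathcal A_1$-sets, i.e.\ $g\in{\rm K}_1(X,\mathbb R^+)\cap\Sigma^f(X,\mathbb R^+)$, and one converts this into an ambiguous base exactly by the Reduction Theorem (or by citing the Hansell--Karlova theorem ${\rm K}_1\cap\Sigma^f=\Sigma_1^f$ that you already invoke for $f$, or Theorem~\ref{th:LHB_general} with the range $\mathbb R^+$) --- but that repair reintroduces precisely the reduction argument that the paper's proof applies directly to the sets $A_n$. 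With that substitution your proof is complete; without it, the claim that $h\in\Sigma_1^f(X,Y\times\mathbb R^+)$ is unsupported.
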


\begin{proof}
  Fix any Baire-one function $\varepsilon:Y\to\mathbb R^+$. Notice that for every $n\in\mathbb N$ the set
  $$
  A_n=f^{-1}(\varepsilon^{-1}(\tfrac{1}{2^n},+\infty))
  $$
  belongs to the class $\mathcal A_1(X)$. By the Reduction Theorem for sets of functional classes \cite[Lemma 3.1]{Karlova:CMUC:2013} (see also \cite[p.~350]{Kuratowski:Top:1}) there exists a partition $(B_n:n\in\mathbb N)$ of $X$ by functionally ambiguous sets such that  $B_n\subseteq A_n$ for every $n$. Lemma~\ref{lem:neighborhood_map} implies that there exists a neighborhood-map $V:X\to\tau(X)$ such that for all  $x,y\in X$
  \begin{gather*}
x,y\in V(x)\cap V(y)\,\,\Longrightarrow\,\, x,y\in B_n\,\,\mbox{for some}\,\, n\in\mathbb N.
  \end{gather*}
Since $f\in\Sigma^f(X,Y)\cap {\rm K}_1(X,Y)$,  it follows from Lemma~\ref{lem:sigma_implies_ed}  that for every $n\in\mathbb N$ there exists a neighborhood-map $V_n:X\to\tau(X)$ such that for all $x,y\in X$
 \begin{gather*}
  x,y\in V_n(x)\cap V_n(y)\,\,\Longrightarrow\,\, d_Y(f(x),f(y))<\frac{1}{2^n}.
 \end{gather*}
For every $x\in X$ we put
  $$
  U(x)=V(x)\cap V_n(x),
  $$
  if $x\in B_n$ for some  $n\in\mathbb N$. It is easy to see that $U:X\to\tau(X)$ is required neighborhood-map.
\end{proof}

\section{Barely continuous $\sigma$-discrete maps}

We start this section with a generalization of the classical theorem of R.~Baire on a classification of pointwise discontinuous maps.

Recall that a map  $f:X\to Y$ is {\it barely continuous}, if the restriction  $f|_F$ on any non-empty close set $F\subseteq X$ has a point of continuity. A map $f$ is {\it pointwise discontinuous}, if the set $C(f)$  of all continuity points of $f$ is dense in $X$.

 Notice that if $X$ is a hereditarily Baire space, then $f$ is barely continuous if and only if for any non-empty closed set $F\subseteq X$ the restriction $f|_F$ is pointwise discontinuous.

 A topological space $X$ is called {\it contractible}, if there exist a point $x_0\in X$ and a continuous map $\gamma:X\times[0,1]\to X$ such that $\gamma(x,0)=x$ and $\gamma(x,1)=x_0$ for all $x\in X$.

\begin{lem}\label{lem:ext_from_cozero}
 Let $\alpha\in [1,\omega_1)$, $X$ be a topological space, $G\subseteq X$ be a functionally open set, $Y$ be a contractible space,  $y_0\in Y$ and $g\in{\rm B}_1(G,Y)$. Then the formula
   $$
    f(x)=\left\{\begin{array}{ll}
                  g(x), & \mbox{if}\,\,\, x\in G,\\
                  y_0, & \mbox{if}\,\,\, x\in X\setminus G
                \end{array}
        \right.
    $$
   defines an extension $f\in{\rm B}_1(X,Y)$ of the map $g$.
  \end{lem}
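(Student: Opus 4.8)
The plan is to reduce everything to a standard fact: a map into a metric (or more generally, regular) space is Baire-one if and only if it is a pointwise limit of continuous maps, and to build the required sequence by interpolating along the contraction $\gamma$. First I would write $G=\bigcup_{n=1}^\infty F_n$ with each $F_n$ functionally closed and $F_n\subseteq \operatorname{int}F_{n+1}$ (possible since $G$ is functionally open, hence a cozero set); choose continuous functions $\varphi_n:X\to[0,1]$ with $\varphi_n\equiv 1$ on $F_n$ and $\varphi_n\equiv 0$ off $F_{n+1}$. Since $g\in{\rm B}_1(G,Y)$, pick continuous $g_n:G\to Y$ with $g_n\to g$ pointwise on $G$.

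Next I would define, for $x\in X$,
\[
f_n(x)=\gamma\bigl(\widetilde{g_n}(x),\,1-\varphi_n(x)\bigr),
\]
where $\widetilde{g_n}$ is any continuous extension of $g_n|_{F_{n+1}}$ to a neighborhood of $F_{n+1}$ — or, more cleanly, I would only use $g_n$ on the functionally closed set $\overline{F_{n+1}}$ where it is defined and patch with $y_0$ outside using $\gamma$: for $x\notin G$ we have $\varphi_n(x)=0$, so $1-\varphi_n(x)=1$ and $\gamma(\cdot,1)=y_0$, giving $f_n(x)=y_0$; for $x\in F_n$ we have $\varphi_n(x)=1$, so $f_n(x)=\gamma(g_n(x),0)=g_n(x)$. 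The point is that $f_n$ is continuous on $X$: on the cozero set $\{\varphi_n>0\}\subseteq F_{n+1}\subseteq G$ it equals $\gamma(g_n(x),1-\varphi_n(x))$, a composition of continuous maps; on the open set $\{\varphi_n<1\}$... here one must be slightly careful, since $g_n$ need not be defined on all of $\{\varphi_n<1\}$. The fix is to take $F_n$ so that $\varphi_n$ is supported in $F_{n+1}$ with $F_{n+1}$ a zero set contained in $G$, and note that $f_n$ agrees on the overlap of the two open pieces $\{\varphi_n>0\}$ and $X\setminus F_{n+1}$ (where it is constantly $y_0$), so $f_n$ is continuous by the pasting lemma for open covers. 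I would spell this out carefully.

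Then I would verify $f_n\to f$ pointwise. If $x\in G$, then $x\in F_n$ for all large $n$, so $f_n(x)=g_n(x)\to g(x)=f(x)$. If $x\notin G$, then $f_n(x)=y_0=f(x)$ for every $n$. Hence $f\in\overline{{\rm C}(X,Y)}^{\,{\rm p}}={\rm B}_1(X,Y)$ (using $\bigcup_{\beta<1}{\rm B}_\beta={\rm C}$), and by construction $f|_G=g$, $f|_{X\setminus G}\equiv y_0$, as required. The hypothesis "$\alpha\in[1,\omega_1)$" is a red herring for the statement as written — only $\alpha=1$ is used — unless the intended conclusion is $f\in{\rm B}_\alpha(X,Y)$ whenever $g\in{\rm B}_\alpha(G,Y)$, in which case the same argument runs with $g_n\to g$ pointwise replaced by $g_n\in\bigcup_{\beta<\alpha}{\rm B}_\beta(G,Y)$ and one needs additionally that $\gamma(\widetilde{g_n}(\cdot),1-\varphi_n(\cdot))$ composed stays in $\bigcup_{\beta<\alpha}{\rm B}_\beta(X,Y)$; I would handle that by the same pasting, noting a map that is of class $<\alpha$ on each member of a finite functionally-open cover is of class $<\alpha$ globally.

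The main obstacle is the continuity (resp.\ lower Baire class) of each $f_n$ across the "seam" between $G$ and $X\setminus G$: $g_n$ lives only on $G$, so the formula $\gamma(g_n(x),1-\varphi_n(x))$ literally makes sense only on $G$, and one must arrange that wherever $\varphi_n(x)>0$ forces us into $G$ we are safely inside a functionally closed subset on which $g_n$ extends continuously, while outside that set the value is forced to the constant $y_0$ by $\varphi_n=0$ and $\gamma(\cdot,1)=y_0$. Getting the interpolation set inclusions $F_n\subseteq\{\varphi_n=1\}$ and $\{\varphi_n>0\}\subseteq F_{n+1}\subseteq G$ exactly right, and then invoking the pasting lemma on the open cover $\{\,\{\varphi_n>0\},\,X\setminus \overline{\{\varphi_n>0\}}\,\}$ (or the two-set functionally-closed cover), is the crux; everything else is routine.
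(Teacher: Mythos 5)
Your proof is correct and follows essentially the same route as the paper's: exhaust $G$ via the cozero function $\varphi$, use the contraction $\gamma$ together with a Urysohn-type function to interpolate $g_n$ to the constant $y_0$ near the boundary of $G$, and paste (the paper uses two open pieces $U_{n,1},U_{n,2}$ with a ``$y_0$ otherwise'' buffer zone, which is only cosmetically different). The one slip is that $\bigl\{\{\varphi_n>0\},\,X\setminus\overline{\{\varphi_n>0\}}\bigr\}$ is not an open cover of $X$ (the boundary of $\{\varphi_n>0\}$ lies in neither set), but the two-set functionally closed cover $\bigl\{\overline{\{\varphi_n>0\}},\,\{\varphi_n=0\}\bigr\}$ that you offer as the alternative does work, since $\overline{\{\varphi_n>0\}}\subseteq F_{n+1}\subseteq G$ and $f_n$ is constantly $y_0$ on $\{\varphi_n=0\}$.
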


  \begin{proof} Let $\gamma:Y\times[0,1]\to Y$ be a continuous map such that $\gamma(y,0)=y$ and $\gamma(y,1)=y_0$ for all $y\in Y$.

Consider a continuous function $\varphi:X\to [0,1]$ with $G=\varphi^{-1}((0,1])$. For every  $n\in\mathbb N$ we put
    \begin{gather*}
      F_{n,1}=X\setminus G, \,\,\,F_{n,2}=\varphi^{-1}([\tfrac{1}{n+1},1]),\\
      U_{n,1}=\varphi^{-1}([0,\tfrac{1}{n+3})),\,\,\, U_{n,2}=\varphi^{-1}((\tfrac{1}{n+2},1]).
    \end{gather*}
Moreover, for all $n\in\mathbb N$  and $i\in\{1,2\}$ we choose a continuous function  $\psi_{n,i}:X\to [0,1]$ such that
    \begin{gather*}
    U_{n,i}=\psi_{n,i}^{-1}((0,1])\,\,\, \mbox{and}\,\,\, F_{n,i}=\psi_{n,i}^{-1}(1).
    \end{gather*}

Take a sequence $(g_n)_{n=1}^\infty$ of continuous maps  $g_n:G\to Y$ which is pointwise convergent to $g$ on $G$. For every  $n\in\mathbb N$ let
    $g_{n,1}(x)=y_0$ for all $x\in X$ and  $g_{n,2}(x)=g_n(x)$ for all $x\in G$. We define a map $f_n:X\to Y$ as follows:
    \begin{gather*}\label{gath:formula2}
      f_n(x)=\left\{\begin{array}{ll}
                      \gamma(g_{n,i}(x),1-\psi_{n,i}(x)),  & \mbox{if}\,\, x\in U_{n,i}\,\,\mbox{for some}\,\, i\in\{1,2\}, \\
                      y_0, & \mbox{otherwise}.
                    \end{array}
      \right.
    \end{gather*}

Fix $x\in X$. If $x\in X\setminus G$, then $f_n(x)=\gamma(g_{n,1}(x),0)=g_{n,1}(x)=y_0=f(x)$ for all $n\in\mathbb N$. If $x\in G$, then there exists  $n_0$ such that $f_n(x)=\gamma(g_{n,2},0)=g_{n,2}(x)$ for all $n\ge n_0$. Hence, $\lim\limits_{n\to\infty}f_n(x)=f(x)$.

It is easy to see that each map $f_n:X\to Y$ is continuous. Therefore, $f\in{\rm B}_1(X,Y)$.
\end{proof}

Let $X$ be a topological space, $(Y,d_Y)$  be a metric space and $\varepsilon>0$. We say that a map $f:X\to Y$  can be {\it $\varepsilon$-approximated by a Baire-one map on $X$}, if there exists a map $g\in{\rm B}_1(X,Y)$ such that $d_Y(f(x),g(x))\le\varepsilon$ for all $x\in X$;  {\it locally $\varepsilon$-approximated by Baire-one maps on $X$}, if for every  $x\in X$ there is a neighborhood $U_x$ of $x$ such that the restriction $f|_{U_x}$ can be $\varepsilon$-approximated by a Baire-one map on $U_x$.

Notice that analogs of Lemma~\ref{lem:local_uniform_approach} and Theorem~\ref{thm:barely_is_Baire1} for  $Y=\mathbb R$ were proved in \cite{Mykhaylyuk:Visnyk:2000}.

\begin{lem}\label{lem:local_uniform_approach}
 Let $X$ be a Hausdorff paracompact space, $(Y,d_Y)$ be a metric contractible locally path-connected space, $\varepsilon>0$ and let a map $f:X\to Y$ can be locally $\varepsilon$-approximated by Baire-one maps. Then $f$ can be $\varepsilon$-approximated by a Baire-one map on $X$.
 \end{lem}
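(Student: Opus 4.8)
The plan is to glue the local Baire-one $\varepsilon$-approximations together using a locally finite partition of unity subordinated to a suitable open cover, exploiting the contractibility and local path-connectedness of $Y$ to make the gluing continuous at each stage. First I would fix, for every $x\in X$, an open neighbourhood $W_x$ and a map $h_x\in{\rm B}_1(W_x,Y)$ with $d_Y(f(t),h_x(t))\le\varepsilon$ for all $t\in W_x$. By paracompactness of the Hausdorff space $X$, refine $\{W_x:x\in X\}$ to a locally finite open cover $(G_i:i\in I)$; for each $i$ pick $x(i)$ with $G_i\subseteq W_{x(i)}$ and write $g_i=h_{x(i)}|_{G_i}\in{\rm B}_1(G_i,Y)$. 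Passing to functionally open shrinkings (available since a paracompact Hausdorff space is normal, hence each $G_i$ contains a cozero set still covering $X$) I may assume each $G_i$ is functionally open, so that Lemma~\ref{lem:ext_from_cozero} applies.

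Next I would build the approximating sequence. Enumerate a locally finite functionally open cover as above and take a locally finite partition of unity $(\varphi_i:i\in I)$ with $\varphi_i^{-1}((0,1])\subseteq G_i$. For each finite $J\subseteq I$ and each $n$, the maps $g_i$ ($i\in J$) are pointwise limits of continuous maps $g_i^{(n)}:G_i\to Y$; using Lemma~\ref{lem:ext_from_cozero} extend each $g_i$ (and likewise $g_i^{(n)}$, which is already continuous hence Baire-one) to a Baire-one map on all of $X$ by sending $X\setminus G_i$ to a fixed basepoint. Now define the candidate limit $g:X\to Y$ locally: on a neighbourhood meeting only $G_{i_1},\dots,G_{i_k}$, set $g$ to be the ``convex-type'' combination of $g_{i_1},\dots,g_{i_k}$ weighted by $\varphi_{i_1},\dots,\varphi_{i_k}$, realized via iterated application of the contraction $\gamma:Y\times[0,1]\to Y$ together with local paths in $Y$. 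Since all the $g_{i_j}(t)$ lie within distance $2\varepsilon$ of each other on the overlap and each is within $\varepsilon$ of $f(t)$, any point of the ``path-combination'' stays within $\varepsilon$ of $f(t)$ — here is where local path-connectedness is needed to keep the combining paths short and inside the relevant $\varepsilon$-ball, so the estimate $d_Y(f(t),g(t))\le\varepsilon$ survives. Finally, the same combination applied to the continuous maps $g_{i_j}^{(n)}$ yields continuous maps $f_n:X\to Y$ (local finiteness makes the formula well defined and continuous globally) with $f_n(t)\to g(t)$ for every $t$, so $g\in{\rm B}_1(X,Y)$.

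The main obstacle I anticipate is the gluing step: combining more than two local maps through the contraction $\gamma$ and a partition of unity while simultaneously controlling the distance to $f$. Concretely, one must order the index set locally (a finite linear order on the active $G_i$'s) and define an iterated $\gamma$-combination $y\mapsto\gamma(\cdots\gamma(\gamma(g_{i_1}(t),s_1),s_2)\cdots,s_k)$ with $s_\ell$ built from the $\varphi_i$'s; checking that the result is (i) independent of the chosen ordering up to staying in the right ball, (ii) continuous where the active index set changes, and (iii) always within $\varepsilon$ of $f(t)$, is the delicate bookkeeping. A clean way around the ordering issue is to handle $|I|=2$ first (a direct two-map gluing via $\gamma$ and a single partition function, which is exactly the mechanism already used in the proof of Lemma~\ref{lem:ext_from_cozero}) and then induct on the number of locally active sets, absorbing the accumulated error into a slightly-better-than-$\varepsilon$ bound at each finite stage; local finiteness guarantees the induction terminates in a neighbourhood of each point, so the global map is well defined and of the first Baire class.
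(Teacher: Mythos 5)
Your setup (a locally finite, functionally open refinement of the cover by the domains of the local approximants, obtained from paracompactness and normality) is fine, but the gluing step is where the argument breaks, and it breaks for reasons that more careful bookkeeping cannot repair. A ``convex-type combination'' of the values $g_{i_1}(t),\dots,g_{i_k}(t)$ realized by paths has no reason to stay within $\varepsilon$ of $f(t)$: in a general metric space a point on a path joining two points of the ball $B(f(t),\varepsilon)$ need not lie in that ball (balls are not ``convex''), and the hypothesis gives you exactly $\varepsilon$ with no slack to absorb accumulated error. Local path-connectedness does not help here: it provides, for each $y\in Y$ and each radius, \emph{some} small path-connected neighbourhood of $y$, but with no uniformity in $y$, so you cannot choose short connecting paths between the values $g_{i_j}(t)$ with any metric control, let alone choose them continuously (or even Baire-measurably) in $t$. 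Likewise the contraction $\gamma$ drags points toward a fixed basepoint $y_0$, which may be far from $f(t)$, so intermediate values $\gamma(g_{i_j}(t),s)$ leave the $\varepsilon$-ball. In short, the partition-of-unity averaging mechanism is simply unavailable for a general metric contractible target.

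The paper glues set-theoretically instead, and this is the idea your proposal is missing. Take the refinement $\sigma$-discrete rather than merely locally finite, $\mathcal U=\bigcup_n\mathcal U_n$ with each $\mathcal U_n$ discrete and consisting of functionally open sets. On the functionally open set $U_n=\bigcup\mathcal U_n$ the local approximants of the $n$-th layer combine into a single map $f_n\in{\rm B}_1(U_n,Y)$ (discreteness makes this automatic), which Lemma~\ref{lem:ext_from_cozero} extends to $g_n\in{\rm B}_1(X,Y)$ --- this is the only place contractibility is used. Then disjointify the layers: the sets $A_n=\bigcup\{U\setminus\bigcup_{k<n}\bigcup\mathcal U_k:U\in\mathcal U_n\}$ are functionally ambiguous of the first class and partition $X$, and one defines $g=g_n$ on $A_n$. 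No averaging occurs: at every point $g$ coincides with one of the original local approximants, so $d_Y(f(x),g(x))\le\varepsilon$ holds exactly. That this piecewise-defined $g$ is Baire-one is not verified by exhibiting a convergent sequence of continuous maps directly, but by showing $g\in\Sigma_1^f(X,Y)$ via \cite[Proposition 8]{Karlova:TA:2015} and then invoking Theorem~\ref{th:LHB_general}; that theorem is where connectedness and local path-connectedness of $Y$ actually enter, not in any path-shortening estimate.
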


\begin{proof} Since $X$ is paracompact, there exists a $\sigma$-discrete cover $\mathcal U$ of $X$ by functionally open sets such that for each  $U\in\mathcal U$ we may choose a map $f_U\in {\rm B}_1(U,Y)$ with  $d_Y(f(x),f_U)\le\varepsilon$ for all  $x\in U$. Let $\mathcal U=\bigcup\limits_{n}\mathcal U_n$, where $\mathcal U_n$ is a discrete family of functionally open sets in $X$. Denote $U_n=\cup\mathcal U_n$ for every $n\in\mathbb N$ and notice that each set  $U_n$ is functionally open. Therefore, by Lemma~\ref{lem:ext_from_cozero} for every $n\in\mathbb N$ the map $f_n\in{\rm B}_1(U_n,Y)$, defined by the equality
  $$
  f_n(x)=f_U(x), \,\,\,\mbox{if}\,\,\, x\in U\in\mathcal U_n,
  $$
can be extended to a map $g_n\in{\rm B}_1(X,Y)$.

We put
    $$
    \mathcal A_1=\mathcal U_1\,\,\,\mbox{and}\,\,\, \mathcal A_{n+1}=(U\setminus\bigcup\limits_{k=1}^n\bigcup\mathcal U_k:U\in\mathcal U_{n+1})\,\,\,\mbox{for}\,\,\, n\ge 1.
    $$
Then each set $A_n=\cup\mathcal A_n$ is functionally ambiguous in $X$ and the family  $(A_n:n\in\mathbb N)$  forms a partition of  $X$. Applying \cite[Proposition 8]{Karlova:TA:2015} we have that the formula
    $$
    g(x)=g_n(x),\,\,\,\mbox{if}\,\,\, x\in A_n\,\,\,\mbox{for some}\,\,\, n\in\mathbb N
    $$
   defines the map $g\in\Sigma_1^f(X,Y)$. Hence, $g\in {\rm B}_1(X,Y)$ by Theorem~\ref{th:LHB_general}. Moreover, for every $x\in X$ we have
    $$
    d_Y(f(x),g(x))\le\varepsilon,
    $$
since  $g$ is an extension of $g_n$, which is an extension of $f_n$.
\end{proof}

For a point $x$ of a topological space $X$ the system of all neighborhoods of $x$ is denoted by ${\mathcal U}_x$. For a map  $f:X\to Y$ between a topological space $X$ and a metric space $(Y,d_Y)$, and for a set $A\subseteq X$ we put
$$
\omega_f(A)=\mathop{\sup}\limits_{x,y\in A}d_Y(f(x),f(y)),\quad \omega_f(x)=\mathop{{\rm inf}}\limits_{U\in{\mathcal U}_x} \omega_f(U),
$$

We say that $f:X\to (Y,d_Y)$ is {\it weakly barely continuous}, if  for every $\varepsilon>0$ and for every non-empty closed set $F\subseteq X$ there exists a point $x\in F$ such that $\omega_{f|_F}(x)<\varepsilon$. Evidently, each barely continuous map is weakly barely continuous. The converse it not true, as a restriction of the Riemann function shows. Indeed, let $f(x)=\frac 1n$ if $x=r_n\in\mathbb Q$ for some $n\in\mathbb N$ and $f(x)=0$ if $x\in\mathbb R\setminus\mathbb Q$. Then $f|_{\mathbb Q}$ is weakly barely continuous, but is not a barely continuous function.

\begin{thm}\label{thm:barely_is_Baire1}
  Let $X$ be a perfect  paracompact space, $(Y,d_Y)$ be a metric contractible locally path-connected space and  $f:X\to Y$ be a weakly barely continuous map. Then $f\in{\rm B}_1(X,Y)$.
\end{thm}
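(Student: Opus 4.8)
The plan is to fix $\varepsilon>0$, to prove that $f$ can be \emph{locally} $\varepsilon$-approximated by Baire-one maps, to upgrade this to a global $\varepsilon$-approximation via Lemma~\ref{lem:local_uniform_approach}, and finally to let $\varepsilon\to0$ and read off a $\Sigma_1^f$-base for $f$ using Lemma~\ref{lemma:criterium_Leb} and Theorem~\ref{th:LHB_general}. I record once that a paracompact Hausdorff perfect space is perfectly normal (so that open subsets are functionally open), that perfect normality is hereditary, and that every open — hence $F_\sigma$ — subspace of such a space is again perfect, paracompact and Hausdorff; these are precisely what is needed to apply Lemmas~\ref{lem:ext_from_cozero} and~\ref{lem:local_uniform_approach} to open subsets of $X$. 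First I would stratify $X$ by oscillation for the fixed $\varepsilon$: put $F_0=X$, $F_{\alpha+1}=\{x\in F_\alpha:\omega_{f|_{F_\alpha}}(x)\ge\varepsilon\}$, and $F_\lambda=\bigcap_{\beta<\lambda}F_\beta$ at limit ordinals. Upper semicontinuity of the oscillation makes each $F_\alpha$ closed, and weak bare continuity forces $F_{\alpha+1}\subsetneq F_\alpha$ whenever $F_\alpha\neq\emptyset$, so the sequence reaches $\emptyset$; hence the sets $G_\alpha:=F_\alpha\setminus F_{\alpha+1}$ partition $X$, each $G_\alpha$ is relatively open in $F_\alpha$, and $\omega_{f|_{F_\alpha}}(x)<\varepsilon$ for every $x\in G_\alpha$. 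Write $\operatorname{rk}(x)$ for the unique index with $x\in G_{\operatorname{rk}(x)}$.

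The core of the argument is the following claim, which I would prove by transfinite induction on $\alpha$: \emph{every $x_0\in G_\alpha$ has an open neighbourhood $V\ni x_0$ in $X$ on which $f$ can be $\varepsilon$-approximated by a Baire-one map.} Given $x_0\in G_\alpha$, choose an open $V\ni x_0$ with ${\rm diam}\,f(V\cap F_\alpha)<\varepsilon$, which exists because $\omega_{f|_{F_\alpha}}(x_0)<\varepsilon$. The open set $W:=V\setminus F_\alpha$ consists of points of rank $<\alpha$, so by the induction hypothesis (and since restrictions of Baire-one maps are Baire-one) $f|_W$ is locally $\varepsilon$-approximated by Baire-one maps on the paracompact Hausdorff space $W$; Lemma~\ref{lem:local_uniform_approach} then yields $h\in{\rm B}_1(W,Y)$ with $d_Y(f(x),h(x))\le\varepsilon$ for all $x\in W$. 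Since $W$ is functionally open in the perfectly normal space $V$ and $Y$ is contractible, Lemma~\ref{lem:ext_from_cozero} extends $h$ to a map $\widehat h\in{\rm B}_1(V,Y)$ equal to $f(x_0)$ on $V\cap F_\alpha$; on $W$ we have $d_Y(f,\widehat h)\le\varepsilon$, and on $V\cap F_\alpha$ we have $d_Y(f(x),\widehat h(x))=d_Y(f(x),f(x_0))\le{\rm diam}\,f(V\cap F_\alpha)<\varepsilon$. (For $\alpha=0$ this is the degenerate case $W=\emptyset$, $\widehat h\equiv f(x_0)$.) Since the $G_\alpha$ cover $X$, the claim says exactly that $f$ is locally $\varepsilon$-approximated by Baire-one maps on $X$, and Lemma~\ref{lem:local_uniform_approach}, now applied to $X$ itself, produces $g_\varepsilon\in{\rm B}_1(X,Y)$ with $d_Y(f(x),g_\varepsilon(x))\le\varepsilon$ for all $x\in X$.

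To finish, I would apply the above with $\varepsilon=\tfrac1{3n}$ for each $n\in\mathbb N$. As $Y$ is metrizable, connected (being contractible) and locally path-connected, Theorem~\ref{th:LHB_general} gives ${\rm B}_1(X,Y)=\Sigma_1^f(X,Y)$, so $g_{1/(3n)}\in\Sigma_1^f(X,Y)$, and by Lemma~\ref{lemma:criterium_Leb} there is a $\sigma$-sfd family $\mathcal A_n$ of functionally ambiguous sets of class $1$ covering $X$ with ${\rm diam}\,g_{1/(3n)}(A)<\tfrac1{3n}$, hence ${\rm diam}\,f(A)<\tfrac1n$, for every $A\in\mathcal A_n$. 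The converse direction of Lemma~\ref{lemma:criterium_Leb} (with base $\bigcup_n\mathcal A_n$) then gives $f\in\Sigma_1^f(X,Y)={\rm B}_1(X,Y)$, as required. I expect the main obstacle to be the transfinite-induction bookkeeping — in particular, confirming that the open subsets $W$ and $V$ inherit paracompactness and perfect normality so that Lemmas~\ref{lem:ext_from_cozero} and~\ref{lem:local_uniform_approach} are applicable, together with the verification that the oscillation stratification terminates — while the remaining steps are routine.
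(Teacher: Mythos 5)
Your proposal is correct, and its skeleton (local $\varepsilon$-approximation $\to$ global approximation via Lemma~\ref{lem:local_uniform_approach} $\to$ pass to the limit) is the same as the paper's; the difference lies in how the two middle and final steps are organized. Where you run an explicit transfinite induction over the oscillation stratification $F_0\supseteq F_1\supseteq\dots$, $F_{\alpha+1}=\{x\in F_\alpha:\omega_{f|_{F_\alpha}}(x)\ge\varepsilon\}$, the paper compresses the same exhaustion into one step: it takes $G$ to be the union of \emph{all} open sets on which $f$ is $\varepsilon$-approximable, applies Lemma~\ref{lem:local_uniform_approach} to $G$ (which is paracompact and functionally open because it is $F_\sigma$ in the perfect paracompact $X$), and then derives a contradiction from $F=X\setminus G\ne\emptyset$ by picking a point $x_0\in F$ of small relative oscillation and extending via Lemma~\ref{lem:ext_from_cozero} exactly as in your successor step --- your set $G_{<\alpha}$-argument at a single point. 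The paper's version is shorter and avoids the bookkeeping you flag as the main risk (termination of the stratification, heredity of paracompactness and perfect normality to the open sets $V$ and $W$), all of which you handle correctly anyway; your version makes the mechanism more transparent and isolates where weak bare continuity is used. The final steps also differ: the paper takes $\varepsilon=1/n$ and invokes the theorem that a uniform limit of Baire-one maps into a metrizable connected locally path-connected space is Baire-one, whereas you take $\varepsilon=1/(3n)$ and read off a $\Sigma_1^f$-base from the approximants via Lemma~\ref{lemma:criterium_Leb} and Theorem~\ref{th:LHB_general}; your route trades the external uniform-limit citation for machinery already present in the paper, at the cost of a triangle-inequality computation (note only ${\rm diam}\,f(A)\le 1/n$ rather than $<1/n$, which is harmless for applying Lemma~\ref{lemma:criterium_Leb}). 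Both endgames are sound.
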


\begin{proof}
 Fix $\varepsilon>0$ and show that $f$ can be locally $\varepsilon$-approximated by Baire-one maps. Let $\mathcal G$ be a collection of all open subsets of $X$ such that $f$ is $\varepsilon$-approximated by Baire-one maps on each set from $\mathcal G$. We put $G=\cup\mathcal G$ and notice that $G$ is open and non-empty, since it contains all continuity points of  $f$. Moreover, since  $G$ is $F_\sigma$ in  $X$, $G$ is a paracompact functionally open subset of  $X$. Applying Lemma~\ref{lem:local_uniform_approach} we obtain the existence of a map $h\in{\rm B}_1(G,Y)$ such that
  $$
  d_Y(h(x),f(x))\le\varepsilon
  $$
 for all $x\in G$.

 We put $F=X\setminus G$ and prove that $F=\emptyset$. Assume the contrary. Since $f$ is weakly barely continuous, we may choose a point  $x_0\in F$ and an open neighborhood $U_0$ of $x_0$ such that
 $$
  d_Y(f(x),f(x_0))<\varepsilon
  $$
 for all $x\in U_0\cap F$. Let $y_0=f(x_0)$. By Lemma~\ref{lem:ext_from_cozero} the map $g:X\to Y$, defined as follows
  $$
  g(x)=\left\{\begin{array}{ll}
                h(x), & x\in G, \\
                y_0, & x\in F,
              \end{array}
  \right.
  $$
 is of the first Baire class. Notice that
  $$
  d_Y(f(x),g(x))\le\varepsilon
  $$
for all $x\in U_0$. Therefore, $U_0\subseteq G$. In particular,  $x_0\in G$, which implies a contradiction. Hence, $G=X$.

Lemma~\ref{lem:local_uniform_approach} implies that there exists a sequence  $(f_n)_{n=1}^\infty$ of Baire-one maps wich that
  $$
  d_Y(f(x),f_n(x))\le\frac 1n.
  $$
Since a uniform limit of a sequence of Baire-one maps with  a metrizable connected and locally path-connected range space remains a Baire-one map~\cite[Theorem 2]{KarlovaMykhaylyuk:UMZh:2006},  $f\in {\rm B}_1(X,Y)$.
\end{proof}

\begin{thm}\label{th:barely_cont_is_sigma_discrete}
  Let $X$ be a perfect  paracompact space, $Y$ be a metrizable space and $f:X\to Y$ be a weakly barely continuous map. Then $f\in\Sigma_1(X,Y)$.
\end{thm}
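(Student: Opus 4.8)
The plan is to deduce the statement from Theorem~\ref{thm:barely_is_Baire1}, which already handles contractible locally path-connected metric ranges, by embedding $Y$ isometrically into a Banach space, and then to convert the resulting membership in ${\rm B}_1$ into membership in $\Sigma_1$ by means of Theorem~\ref{th:LHB_general}. The point is that $\Sigma_1$-membership is a topological property insensitive to enlarging the range, whereas the two auxiliary theorems demand a well-behaved (contractible, locally path-connected) range, which a Banach superspace supplies for free.

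First I fix a metric $d_Y$ on $Y$ with respect to which $f$ is weakly barely continuous and a basepoint $y_0\in Y$, and consider the Kuratowski embedding $\iota\colon Y\to C_b(Y)$ given by $\iota(y)=d_Y(\cdot,y)-d_Y(\cdot,y_0)$, where $C_b(Y)$ denotes the Banach space of bounded real-valued functions on $Y$ with the sup-norm. A routine computation (the supremum defining the norm of $\iota(y)-\iota(y')$ is attained at $y$ or $y'$) shows that $\iota$ is an isometry of $(Y,d_Y)$ onto a subspace $\iota(Y)$ of $\tilde Y:=C_b(Y)$; and $\tilde Y$, being a normed space, is metrizable, connected, locally path-connected and, being convex, contractible. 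Since $\iota$ is an isometry, the map $g:=\iota\circ f\colon X\to\tilde Y$ satisfies $\omega_{g|_F}(x)=\omega_{f|_F}(x)$ for every non-empty closed $F\subseteq X$ and every $x\in F$, so $g$ is weakly barely continuous. Applying Theorem~\ref{thm:barely_is_Baire1} to the perfect paracompact space $X$ and the metric contractible locally path-connected space $\tilde Y$ gives $g\in{\rm B}_1(X,\tilde Y)$.

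Since $\tilde Y$ is metrizable, connected and locally path-connected, Theorem~\ref{th:LHB_general}(i) with $\alpha=1$ yields ${\rm B}_1(X,\tilde Y)=\Sigma_1^f(X,\tilde Y)$, so $g$ possesses a $\sigma$-strongly functionally discrete base $\mathcal B$ consisting of functionally ambiguous sets of the first class in $X$. It remains to observe that $\mathcal B$ is also a base for $f$: as $\iota$ is a homeomorphism of $Y$ onto $\iota(Y)\subseteq\tilde Y$, for every open $V\subseteq Y$ one can choose an open $W\subseteq\tilde Y$ with $W\cap\iota(Y)=\iota(V)$, whence $f^{-1}(V)=g^{-1}(W)$ is a union of members of $\mathcal B$. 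Thus $f\in\Sigma_1^f(X,Y)$, and since strongly functionally discrete families are discrete and functionally ambiguous sets of the first class are ambiguous of the first class, we obtain $f\in\Sigma_1(X,Y)$. I do not expect a genuine obstacle here: the argument is an assembly of Theorems~\ref{thm:barely_is_Baire1} and~\ref{th:LHB_general}, the only steps requiring care being the verification that the Kuratowski map is an isometric embedding transporting weak bare continuity, and that a base for a map into a subspace serves as a base for its corestriction — both of which are standard.
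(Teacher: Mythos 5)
Your proposal is correct and follows essentially the same route as the paper: the paper also embeds $Y$ isometrically into the Banach space $\ell_\infty(Y)$, applies Theorem~\ref{thm:barely_is_Baire1} to the composition, and then invokes Theorem~\ref{th:LHB_general} to extract a $\sigma$-discrete base of ambiguous sets, which serves as a base for $f$ as well. You merely spell out two details the paper leaves implicit (that the isometric embedding transports weak bare continuity, and that a base for the composition restricts to a base for $f$), both of which you handle correctly.
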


\begin{proof}
 Let $Z=\ell_\infty(Y)$, $\varphi:Y\to Z$ is a homeomorphic embedding and $h=\varphi\circ f$. By Theorem~\ref{thm:barely_is_Baire1}, $h\in{\rm B}_1(X,Z)$. Then it follows from Theorem~\ref{th:LHB_general} that $h$ together with $f$ have a $\sigma$-discrete base $\mathcal B$ which consists of ambiguous sets in  $X$.
\end{proof}

\begin{rem}
Notice that if $X$ is not perfect, Theorem~\ref{th:barely_cont_is_sigma_discrete} is not valid. Indeed, let $D$ be an uncountable discrete space and $X=\alpha D$ be its one-point compactification. Consider a bijection $f:X\to D$. It is easy to see that $f$ is barely continuous. Assume that $f$ has a base $\mathcal B=\bigcup\limits_{n}\mathcal B_n$ in  $X$, where every family $\mathcal B_n$ is discrete in $X$. The compactness of $X$ implies that every family  $\mathcal B_n$ is finite. On the other hand, all singletons are elements of  $\mathcal B$, which implies a contradiction.
\end{rem}

\begin{thm}\label{th:general_sigma_H_barely}
 Let $X$ be a hereditarily Baire metrizable space and $Y$ be a metrizable space. For a map $f:X\to Y$ the following conditions are equivalent:
  \begin{enumerate}
    \item\label{it:1} $f\in\Sigma_1(X,Y)$,

    \item\label{it:2} $f\in {\rm H}_1(X,Y)$,

    \item\label{it:3} $f$ is barely continuous,

    \item\label{it:4} for any $\varepsilon>0$ there exists a function $\delta\in{\rm B}_1(X,\mathbb R^+)$ such that (\ref{gath:intro3}) for all $x,y\in X$, where $d_X$ is a metric on $X$ which generates its topology;

   \item\label{it:5} for any $\varepsilon>0$ there exists a function $\delta:X\to \mathbb R^+$ such that   (\ref{gath:intro3}) holds  for all $x,y\in X$, where $d_X$ is a metric on $X$ which generates its topology.
  \end{enumerate}
  \end{thm}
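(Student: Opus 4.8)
The plan is to prove Theorem~\ref{th:general_sigma_H_barely} by establishing a cycle of implications, drawing on the machinery already developed in the excerpt. The natural route is
$(\ref{it:3})\Rightarrow(\ref{it:1})\Rightarrow(\ref{it:2})\Rightarrow(\ref{it:3})$ for the ``structural'' equivalences, together with a separate loop $(\ref{it:1})\Rightarrow(\ref{it:4})\Rightarrow(\ref{it:5})\Rightarrow(\ref{it:1})$ tying in the $\varepsilon$--$\delta$ conditions. Several of these arrows are essentially free. For $(\ref{it:3})\Rightarrow(\ref{it:1})$: a barely continuous map is in particular weakly barely continuous, and a metrizable space is perfect and paracompact, so Theorem~\ref{th:barely_cont_is_sigma_discrete} applies directly to give $f\in\Sigma_1(X,Y)$. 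For the $\varepsilon$--$\delta$ part, $(\ref{it:1})\Rightarrow(\ref{it:4})$ is exactly Lemma~\ref{lem:sigma_implies_ed}(\ref{it:lem:sigma_implies_ed:2}) applied for each $\varepsilon>0$ (noting $\Sigma_1(X,Y)=\Sigma_1^f(X,Y)$ in a metrizable space, since discrete families there are sfd); $(\ref{it:4})\Rightarrow(\ref{it:5})$ is trivial; and $(\ref{it:5})\Rightarrow(\ref{it:1})$ is Lemma~\ref{lem:ed_implies_sigma} (again $\Sigma_1^f=\Sigma_1$ here). So the new content is concentrated in $(\ref{it:1})\Rightarrow(\ref{it:2})$ and $(\ref{it:2})\Rightarrow(\ref{it:3})$.

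For $(\ref{it:1})\Rightarrow(\ref{it:2})$ I would argue as follows. Suppose $f$ has a $\sigma$-discrete base $\mathcal B=\bigcup_n\mathcal B_n$ consisting of ambiguous sets of class $1$, i.e.\ sets that are simultaneously $F_\sigma$ and $G_\delta$ in the metrizable space $X$. In particular each $B\in\mathcal B$ is $F_\sigma$. For an open $V\subseteq Y$ we have $f^{-1}(V)=\bigcup\mathcal B_V$ for some $\mathcal B_V\subseteq\mathcal B$. Splitting $\mathcal B_V$ into its pieces in each level $\mathcal B_n$, a countable union of a discrete family of $F_\sigma$ sets is again $F_\sigma$ (take the discrete-family union level by level: $\bigcup_{B\in\mathcal B_n\cap\mathcal B_V}B=\bigcup_k\bigcup_{B}F_{B,k}$ where $B=\bigcup_k F_{B,k}$ with $F_{B,k}$ closed, and for fixed $k$ the set $\bigcup_B F_{B,k}$ is closed because $\mathcal B_n$ is discrete). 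Hence $f^{-1}(V)$ is a countable union of $F_\sigma$ sets, i.e.\ $F_\sigma$, so $f\in{\rm H}_1(X,Y)$.

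For $(\ref{it:2})\Rightarrow(\ref{it:3})$ one uses the hereditary Baire hypothesis together with a Baire-category argument, which is where the main work lies. Let $F\subseteq X$ be non-empty and closed; then $F$ is itself a hereditarily Baire metrizable space and $f|_F\in{\rm H}_1(F,Y)$, so it suffices to show $f|_F$ has a continuity point, i.e.\ we may assume $F=X$. Fix a countable base $\{V_k\}$ for $Y$ consisting of sets of small diameter (or rather, for each $n$ a countable cover of $Y$ by open sets of diameter $<1/n$). Writing each $f^{-1}(V_k^{(n)})$ as an $F_\sigma$ set $\bigcup_m C_{k,m}^{(n)}$ with $C_{k,m}^{(n)}$ closed, the sets $C_{k,m}^{(n)}$ for fixed $n$ cover $X$, so by the Baire property at least one has non-empty interior in any given non-empty open set; iterating over $n$ and using that $X$ is Baire, one produces, inside any prescribed non-empty open set, a non-empty open set on which $\omega_f<1/n$, and then a decreasing sequence whose closures shrink to a point of continuity. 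This is the classical Baire argument that $F_\sigma$-measurability implies point-wise discontinuity on Baire spaces, and since $X$ is hereditarily Baire it gives bare continuity. The hereditary Baire hypothesis is exactly what makes the closed-subspace reduction legitimate, and this step — assembling the category argument cleanly — is the part I expect to require the most care.
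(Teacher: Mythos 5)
Your overall architecture coincides with the paper's: the cycle $(\ref{it:3})\Rightarrow(\ref{it:1})\Rightarrow(\ref{it:2})\Rightarrow(\ref{it:3})$ together with $(\ref{it:1})\Rightarrow(\ref{it:4})\Rightarrow(\ref{it:5})\Rightarrow(\ref{it:1})$, and the arrows $(\ref{it:3})\Rightarrow(\ref{it:1})$, $(\ref{it:1})\Rightarrow(\ref{it:4})$, $(\ref{it:4})\Rightarrow(\ref{it:5})$, $(\ref{it:5})\Rightarrow(\ref{it:1})$ are obtained exactly as in the paper from Theorem~\ref{th:barely_cont_is_sigma_discrete}, Lemma~\ref{lem:sigma_implies_ed} and Lemma~\ref{lem:ed_implies_sigma}. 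Your direct argument for $(\ref{it:1})\Rightarrow(\ref{it:2})$ (a union of a discrete family of $F_\sigma$-sets is $F_\sigma$, summed level by level) is correct and simply makes explicit what the paper imports from Hansell.

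There is, however, a genuine gap in your proof of $(\ref{it:2})\Rightarrow(\ref{it:3})$. Your Baire-category argument begins by fixing, for each $n$, a \emph{countable} cover of $Y$ by open sets of diameter $<1/n$. Such a cover exists only when $Y$ is separable (equivalently, Lindel\"of, for metrizable $Y$), whereas the theorem is stated for an arbitrary metrizable $Y$ and the whole point of the paper is the non-separable setting. Without countability of the cover, the decomposition $X=\bigcup_{k,m}C^{(n)}_{k,m}$ into closed pieces on which $f$ has small oscillation is an \emph{uncountable} union, and the Baire category theorem gives nothing (compare: $\mathbb R$ is the union of its singletons). Note also that one cannot reduce to the separable case by restricting the range, since $f(X)$ need not be separable when $f$ is merely of the first Lebesgue class. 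The paper closes this gap by invoking Koumoullis's theorem \cite[Theorem 4.12]{Kum}, which establishes $(\ref{it:2})\Rightarrow(\ref{it:3})$ for hereditarily Baire $X$ and arbitrary metric $Y$ by a substantially more delicate argument; your classical argument proves the implication only under the additional hypothesis that $Y$ is separable, and as written it does not yield the stated theorem.
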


\begin{proof} The implication {\bf (\ref{it:1})$\Rightarrow$(\ref{it:2})} follows from~\cite[Proposition 2]{Hansell:1974}.
The implication {\bf (\ref{it:2})$\Rightarrow$(\ref{it:3})} was proved by G.~Koumoullis \cite[Theorem 4.12]{Kum}.
Theorem~\ref{th:barely_cont_is_sigma_discrete} implies  {\bf (\ref{it:3})$\Rightarrow$(\ref{it:1})}.
Lemma~\ref{lem:sigma_implies_ed} implies {\bf (\ref{it:1})$\Rightarrow$(\ref{it:4})}. Evidently, {\bf (\ref{it:4})$\Rightarrow$(\ref{it:5})}.
 Finally,  the implication {\bf (\ref{it:5})$\Rightarrow$(\ref{it:1})} is proved in Lemma~\ref{lem:ed_implies_sigma}.
\end{proof}

\begin{rem}
 The implication (\ref{it:3})$\Rightarrow$(\ref{it:2}) for a perfect  paracompact space $X$ and a perfect space $Y$ was established in~\cite[Theorem~6]{KS:MatStud:2012}. The implication (\ref{it:5})$\Rightarrow$(\ref{it:3}) was also proved in~\cite[Theorem~5]{KS:MatStud:2012}.
  \end{rem}

\section{A characterization of right compositors}

The class of all right ${\rm B}_\alpha$-compositors between $X$ and $Y$ we will denote by ${\rm RB}_\alpha(X,Y)$.

\begin{prop}\label{prop:stable_in_comp}
  Let $X$, $Y$ be topological spaces and $\alpha\in[1,\omega_1)$. Then
  \begin{gather*}
    \overline{\bigcup\limits_{\beta<\alpha}{\rm RB}_{\beta}(X,Y)}^{\,\,{\rm st}}\subseteq {\rm RB}_\alpha(X,Y).
  \end{gather*}
\end{prop}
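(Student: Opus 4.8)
The plan is a diagonal argument built on a short preliminary monotonicity fact. Let $f$ belong to the left–hand side, so fix a sequence $f_n\in\bigcup_{\beta<\alpha}{\rm RB}_{\beta}(X,Y)$, say $f_n\in{\rm RB}_{\beta_n}(X,Y)$ with $\beta_n<\alpha$, which converges stably to $f$ on $X$. To see that $f\in{\rm RB}_\alpha(X,Y)$, fix an arbitrary topological space $Z$ and a map $g\in{\rm B}_\alpha(Y,Z)$; the task is to show $g\circ f\in{\rm B}_\alpha(X,Z)$. By the very definition ${\rm B}_\alpha(Y,Z)=\overline{\bigcup_{\gamma<\alpha}{\rm B}_\gamma(Y,Z)}^{\,\mathrm p}$ there is a sequence $g_k\in{\rm B}_{\gamma_k}(Y,Z)$ with $\gamma_k<\alpha$ converging pointwise to $g$ on $Y$.

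First I would record that the compositor classes are monotone: ${\rm RB}_{\beta}(X,Y)\subseteq{\rm RB}_{\alpha}(X,Y)$ whenever $\beta\le\alpha$. This is a transfinite induction on $\alpha$. The case $\alpha=\beta$ is trivial. For the step, let $h\in{\rm RB}_{\beta}(X,Y)$, let $Z$ be a space and $g\in{\rm B}_\alpha(Y,Z)$; write $g$ as the pointwise limit of maps $g_k\in{\rm B}_{\gamma_k}(Y,Z)$ with $\gamma_k<\alpha$, and replace $\gamma_k$ by $\max\{\gamma_k,\beta\}$, so that $\beta\le\gamma_k<\alpha$ and still $g_k\in{\rm B}_{\gamma_k}(Y,Z)$ (here one uses that Baire classes are monotone in $\gamma$, which follows by feeding constant sequences into the definition). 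The inductive hypothesis gives $h\in{\rm RB}_{\gamma_k}(X,Y)$, hence $g_k\circ h\in{\rm B}_{\gamma_k}(X,Z)\subseteq\bigcup_{\gamma<\alpha}{\rm B}_\gamma(X,Z)$; since $g_k\circ h\to g\circ h$ pointwise on $X$, we conclude $g\circ h\in{\rm B}_\alpha(X,Z)$, i.e. $h\in{\rm RB}_\alpha(X,Y)$.

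Now the diagonal step. Put $\delta_n=\max\{\beta_n,\gamma_n\}<\alpha$. By monotonicity of Baire classes $g_n\in{\rm B}_{\delta_n}(Y,Z)$, and by the monotonicity of compositor classes just proved $f_n\in{\rm RB}_{\delta_n}(X,Y)$; therefore $g_n\circ f_n\in{\rm B}_{\delta_n}(X,Z)\subseteq\bigcup_{\beta<\alpha}{\rm B}_\beta(X,Z)$ for every $n$. Next, $g_n\circ f_n\to g\circ f$ pointwise on $X$: fix $x\in X$; stable convergence of $(f_n)$ yields $m$ with $f_n(x)=f(x)$ for all $n\ge m$, so $g_n(f_n(x))=g_n(f(x))$ for $n\ge m$, and $g_n(f(x))\to g(f(x))$ because $g_k\to g$ pointwise at the point $f(x)\in Y$. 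Hence $g\circ f$ is a pointwise limit of maps from $\bigcup_{\beta<\alpha}{\rm B}_\beta(X,Z)$, i.e. $g\circ f\in\overline{\bigcup_{\beta<\alpha}{\rm B}_\beta(X,Z)}^{\,\mathrm p}={\rm B}_\alpha(X,Z)$. Since $Z$ and $g$ were arbitrary, $f\in{\rm RB}_\alpha(X,Y)$.

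The main obstacle is keeping the estimate sharp: a single pointwise-limit operation must suffice. If one instead first passes to the stable limit in $n$ for each fixed $k$ (getting $g_k\circ f\in{\rm B}_\alpha(X,Z)$ only, since $\overline{\;\cdot\;}^{\,\mathrm{st}}\subseteq\overline{\;\cdot\;}^{\,\mathrm p}$) and only afterwards to the pointwise limit in $k$, one lands in ${\rm B}_{\alpha+1}(X,Z)$ rather than in ${\rm B}_\alpha(X,Z)$. Carrying out both limits simultaneously along the diagonal $(g_n\circ f_n)_n$ is what avoids this loss, and the auxiliary monotonicity ${\rm RB}_{\beta_n}\subseteq{\rm RB}_{\delta_n}$ is precisely what permits composing $f_n$ with the higher–class map $g_n$.
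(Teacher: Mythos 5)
Your proof is correct and follows essentially the same route as the paper: the same diagonal argument with $h_n=g_n\circ f_n\in{\rm B}_{\delta_n}(X,Z)$, $\delta_n=\max\{\beta_n,\gamma_n\}$, converging pointwise to $g\circ f$ thanks to the stable convergence of $(f_n)$. The only difference is that you spell out, via transfinite induction, the monotonicity ${\rm RB}_{\beta}\subseteq{\rm RB}_{\delta}$ for $\beta\le\delta$, which the paper merely asserts with the word ``observe''.
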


\begin{proof}
 Let $(f_n)_{n=1}^\infty$ be a sequence of right ${\rm B}_{\beta_n}$-compositors which is  convergent pointwisely to $f:X\to Y$ and $\beta_n<\alpha$ for every $n\in\mathbb N$. Consider a topological space $Z$ and a map $g\in{\rm B}_\alpha(Y,Z)$. Choose a sequence $(g_n)_{n=1}^\infty$ of maps $g_n\in{\rm B}_{\gamma_n}(Y,Z)$ which converges pointwisely to $g$ and $\gamma_n<\alpha$ for every $n\in\mathbb N$. Denote $\delta_n=\max\{\beta_n,\gamma_n\}$ and observe that each map $f_n$ is the right  ${\rm B}_{\delta_n}$-compositor. Then $h_n=g_n\circ f_n\in {\rm B}_{\delta_n}(X,Z)$. Moreover, if $x\in X$, then $f_n(x)=f(x)$ for all $n\ge N$, which implies that $\lim\limits_{n\to\infty}h_n(x)=\lim\limits_{n\to\infty}g_n(f(x))=g(f(x))$. Hence, $f\in {\rm RB}_\alpha(X,Y)$.
\end{proof}

The following result was proved in~\cite[3.2 (4), p.160]{Hansell:1971} for metrizable spaces $X,Y,Z$.

\begin{lem}\label{lem:compose_sigma_discr}
 Let $X$, $Y$, $Z$ be topological spaces, $f\in\Sigma^f(X,Y)$, $g\in\Sigma^f(Y,Z)$ and $h=g\circ f$. Then $h\in\Sigma^f(X,Z)$.
\end{lem}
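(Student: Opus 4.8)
The plan is to chase preimages through the composition, working at the level of bases. Let $\mathcal B$ be a $\sigma$-strongly functionally discrete base for $f$ consisting of functionally ambiguous-type sets (more precisely, a $\sigma$-sfd base in the sense of $\Sigma^f(X,Y)$), written $\mathcal B=\bigcup_{n}\mathcal B_n$ with each $\mathcal B_n$ an sfd family in $X$; and let $\mathcal C=\bigcup_{m}\mathcal C_m$ be a $\sigma$-sfd base for $g$ with each $\mathcal C_m$ sfd in $Y$. The natural candidate for a base for $h=g\circ f$ is the family
$$
\mathcal D=\Bigl\{\,f^{-1}(C)\cap B : B\in\mathcal B,\ C\in\mathcal C,\ B\subseteq f^{-1}(C)\ \text{is not required}\,\Bigr\},
$$
but it is cleaner to take $\mathcal D_{n,m}=\{\,B\cap f^{-1}(C): B\in\mathcal B_n,\ C\in\mathcal C_m\,\}$ and $\mathcal D=\bigcup_{n,m}\mathcal D_{n,m}$. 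First I would check that $\mathcal D$ is a base for $h$: given an open $W\subseteq Z$, we have $g^{-1}(W)=\bigcup\mathcal C_W$ for some $\mathcal C_W\subseteq\mathcal C$, and $f^{-1}(g^{-1}(W))=\bigcup_{C\in\mathcal C_W}f^{-1}(C)$; each $f^{-1}(C)$ is itself a union of members of $\mathcal B$, so $h^{-1}(W)$ is a union of sets of the form $B\cap f^{-1}(C)$ with $B\in\mathcal B$, $C\in\mathcal C_W$, hence a union of members of $\mathcal D$.

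The substantive point is that each family $\mathcal D_{n,m}$ is strongly functionally discrete in $X$. Here I would use that $\mathcal C_m$ is sfd in $Y$: there is a discrete family $(V_C:C\in\mathcal C_m)$ of functionally open subsets of $Y$ with $\overline{C}\subseteq V_C$. Then $(f^{-1}(V_C):C\in\mathcal C_m)$ is a family of subsets of $X$, discrete in $X$ because discreteness is preserved by taking preimages under an arbitrary map (a point $x$ with a neighbourhood $O$ meeting at most one $V_C$ has $f^{-1}(O)\ni x$ meeting at most one $f^{-1}(V_C)$ — but one must be careful: $f^{-1}(O)$ need not be open). The correct route is: since $f\in\Sigma^f(X,Y)$ in particular the preimages $f^{-1}(V_C)$ of functionally open sets are of a manageable class, but for mere discreteness one argues pointwise anyway, and then intersects with the sfd structure coming from $\mathcal B_n$. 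Concretely, fix for $\mathcal B_n$ a discrete family $(U_B:B\in\mathcal B_n)$ of functionally open sets in $X$ with $\overline B\subseteq U_B$. The set $B\cap f^{-1}(C)$ has closure contained in $U_B\cap \operatorname{int} f^{-1}(\overline C)$, and I would exhibit the required discrete family of functionally open hulls as $U_B\cap f^{-1}(V_C)$ after replacing $f^{-1}(V_C)$ by a genuinely functionally open set — which is exactly where the hypothesis $f\in\Sigma^f(X,Y)$ (rather than just $\Sigma(X,Y)$) is needed, together with Lemma~\ref{lem:neighborhood_map}-style refinement: pass to a sequence of sfd refinements of $\mathcal B$ so that the sets $f^{-1}(V_C)$ can be absorbed.

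I expect the main obstacle to be precisely this last bookkeeping: showing that the ``cross'' family $\{U_B\cap(\text{something functionally open around }f^{-1}(C))\}$ is simultaneously discrete (this is the easy, pointwise part, inherited from discreteness of $(U_B)$ alone, since each point of $X$ has a neighbourhood meeting at most one $U_B$) and has functionally open members whose closures trap the sets $B\cap f^{-1}(C)$. Discreteness is free from the $B$-side; the functional-openness of the hull is where $f\in\Sigma^f$ does the work, via the fact that for $f\in\Sigma^f(X,Y)$ the preimage of a functionally open set lies in the appropriate additive class, and a discrete family can be ``functionally separated'' within that class. Once each $\mathcal D_{n,m}$ is shown to be sfd, $\mathcal D=\bigcup_{n,m}\mathcal D_{n,m}$ is a countable union of sfd families forming a base for $h$, so $h\in\Sigma^f(X,Z)$, completing the proof.
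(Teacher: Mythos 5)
Your candidate family $\mathcal D_{n,m}=\{B\cap f^{-1}(C):B\in\mathcal B_n,\ C\in\mathcal C_m\}$, taken over \emph{all} pairs, is the wrong object, and the gap you yourself flag is fatal rather than bookkeeping. The claim that ``discreteness is free from the $B$-side'' is false for this family: a point $x$ may have a neighbourhood meeting only one $U_B$, but for that single $B$ the neighbourhood can meet $B\cap f^{-1}(C)$ and $B\cap f^{-1}(C')$ for infinitely many distinct $C,C'\in\mathcal C_m$, since the sets $f^{-1}(V_C)$ are merely pairwise disjoint, not discrete in $X$ ($f$ is not continuous, so preimages of the discrete open family $(V_C)$ carry no neighbourhood information). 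Nor can the hypothesis $f\in\Sigma^f(X,Y)$ rescue this by making $f^{-1}(V_C)$ ``functionally open up to refinement'': membership in $\Sigma^f(X,Y)$ only asserts the existence of a $\sigma$-sfd base and imposes no additive-class condition on preimages of functionally open sets, so there is no functionally open hull of $f^{-1}(V_C)$ to be had, and the family indexed by pairs genuinely fails to be sfd (or even discrete) in general.

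The missing idea is to throw away most of the pairs. The paper sets
$\mathcal C_{nm}=\{A\cap f^{-1}(B): A\in\mathcal A_n,\ B\in\mathcal B_m,\ f(A)\subseteq U_B\}$
(in your notation: keep $B\cap f^{-1}(C)$ only when $f(B)\subseteq V_C$). Since the sets $U_B$, $B\in\mathcal B_m$, are pairwise disjoint, each nonempty $A\in\mathcal A_n$ admits \emph{at most one} $B$ with $f(A)\subseteq U_B$; hence $\mathcal C_{nm}$ is re-indexed by a subfamily of $\mathcal A_n$, each member is contained in its $A$, and the sfd witness for $\mathcal A_n$ alone witnesses that $\mathcal C_{nm}$ is sfd --- no functional openness on the $Y$-side is needed in $X$ at all. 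One must then re-verify the base property for this smaller family, which is where the sfd structure of $\mathcal C_m$ is actually used: writing $f^{-1}(U_B)=\bigcup\mathcal A_B$ with $\mathcal A_B\subseteq\mathcal A$ gives $f^{-1}(B)=\bigcup\{A\cap f^{-1}(B):A\in\mathcal A_B\}$, and every $A\in\mathcal A_B$ automatically satisfies $f(A)\subseteq U_B$, so the retained pieces still cover $h^{-1}(W)$. Your base verification for the full family is correct but establishes the property for a family that is not sfd; the restricted family is both a base and sfd.
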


\begin{proof}
 Let $\mathcal A=\bigcup\limits_n\mathcal A_n$ be a $\sigma$-sdf base for $f$, $\mathcal B=\bigcup\limits_n\mathcal B_n$ be a $\sigma$-sfd base for  $g$ and each of the families $\mathcal A_n$ and $\mathcal B_n$ is strongly functionally discrete in $X$ and $Y$, respectively. For every $n$ we choose a discrete family  $\mathcal U_n=(U_B:B\in\mathcal B_n)$ of functionally open sets in $Y$ such that  $B\subseteq U_B$ for every  $B\in\mathcal B_n$. For all  $n,m\in\mathbb N$ we put
  $$
  \mathcal C_{nm}=\{A\cap f^{-1}(B): A\in\mathcal A_n, B\in\mathcal B_m\,\,\,\mbox{and}\,\,\, f(A)\subseteq U_B\}.
  $$
  Then the family $\mathcal C_{nm}$ is strongly functionally discrete, since each $\mathcal A_n$ is strongly functionally discrete and for every $A\in\mathcal A_n$ there exists at most one $B\in\mathcal B_m$ with $f(A)\subseteq U_B$.

  It remains to show that the family
  $$
  \mathcal C=\bigcup\limits_n\bigcup\limits_m\mathcal C_{nm}
  $$
is a base for $h$. Let $W$ be an open set in $Z$. Since $\mathcal B$ is a base for  $g$, $g^{-1}(W)=\bigcup\mathcal B_W$, where $\mathcal B_W\subseteq\mathcal B$. Then
\begin{gather}\label{gath:formula1}
  h^{-1}(W)=\bigcup \{f^{-1}(B):B\in\mathcal B_W\}.
   \end{gather}
For each $B\in\mathcal B_W$ we take a family $\mathcal A_B\subseteq\mathcal A$ such that $f^{-1}(U_B)=\cup\mathcal A_B$.
Then
$$
f^{-1}(B)=\bigcup\{A\cap f^{-1}(B):A\in\mathcal A_B, f(A)\subseteq U_B\}
$$
for every $B\in\mathcal B_W$. Taking into account  (\ref{gath:formula1}), we obtain that
$$
h^{-1}(W)=\bigcup\{A\cap f^{-1}(B): B\in\mathcal B_W, A\in\mathcal A_B, f(A)\subseteq U_B\}.
$$
Therefore,  $\mathcal C$ is a base for $h$.
\end{proof}

Let $\alpha\in[0,\omega_1)$. A map $f:X\to Y$ is called {\it $\mathcal M_\alpha$-measurable}, if the preimage $f^{-1}(B)$ of any set $B\in\mathcal M_{\alpha}(Y)$ belongs to the class $\mathcal M_\alpha(X)$.

\begin{rem}\label{rem:ambiguous}
  Evidently, a map is $\mathcal M_\alpha$-measurable if the preimage of any set from $\mathcal A_\alpha(Y)$ belongs to $\mathcal A_\alpha(X)$. Since each set of the $\alpha$'th functionally additive class is a union of a sequence of functionally ambiguous sets of the class $\alpha$, a map is $\mathcal M_\alpha$-measurable if and only if the preimage of any functionally ambiguous set of the class $\alpha$ in $Y$ is functionally ambiguous of the class $\alpha$ in $X$.
\end{rem}

\begin{thm}\label{th:B_alpha_M_alpha}
  Let $X$ be a topological space, $Y$ be a metrizable space,  $\alpha\in[1,\omega_1)$ and $\mathcal Z$ be the class of all metrizable connected and locally path-connected spaces. Let $\beta=\alpha$ if $\alpha<\omega_0$ and $\beta=\alpha+1$ if $\alpha\ge \omega_0$. For a map  $f:X\to Y$ the following conditions are equivalent:
  \begin{enumerate}
    \item\label{th:B_alpha_M_alpha:it:1}  $f$ is the right  ${\rm B}_\alpha$-compositor for the class $\mathcal Z$;

    \item\label{th:B_alpha_M_alpha:it:2}  $f$ is $\mathcal M_\beta$-measurable and $\sigma$-strongly functionally discrete.
  \end{enumerate}
\end{thm}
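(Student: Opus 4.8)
The plan is to reduce everything to Theorem~\ref{th:LHB_general} (which says ${\rm B}_\alpha(\,\cdot\,,Z)=\Sigma_\beta^f(\,\cdot\,,Z)$ for every $Z\in\mathcal Z$), to Lemma~\ref{lem:compose_sigma_discr} on composing $\sigma$-strongly functionally discrete maps, and to the Hansell-type description of the class $\Sigma_\beta^f$: for an arbitrary space and a metrizable range, $\Sigma_\beta^f={\rm K}_\beta\cap\Sigma^f$ (the non-separable theory of Borel-measurable maps, available from~\cite{Karlova:TA:2015}). I shall also use the routine facts that ${\rm B}_\beta(X,Y)\subseteq{\rm K}_\beta(X,Y)$, that $\mathcal M_\beta$-measurability (in the sense of Remark~\ref{rem:ambiguous}) implies membership in ${\rm K}_\beta$ (open sets of the metrizable $Y$ are functionally open, hence of class $\mathcal A_\beta(Y)$), and that for $\gamma<\beta$ every $M\in\mathcal M_\gamma(Y)$ is functionally ambiguous of class $\gamma+1$ in the metrizable $Y$, so that $\chi_M\in{\rm B}_{\gamma+1}(Y,\mathbb R)$ --- indeed $\{M,Y\setminus M\}$ is a $\sigma$-strongly functionally discrete base for $\chi_M$ consisting of such sets, and Theorem~\ref{th:LHB_general} applied with range $\mathbb R$ identifies $\Sigma_{\gamma+1}^f(Y,\mathbb R)$ (or $\Sigma_{\gamma+2}^f(Y,\mathbb R)$) with ${\rm B}_{\gamma+1}(Y,\mathbb R)$.

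\emph{Proof of (\ref{th:B_alpha_M_alpha:it:2})$\Rightarrow$(\ref{th:B_alpha_M_alpha:it:1}).} Since $\mathcal M_\beta$-measurability gives $f\in{\rm K}_\beta(X,Y)$, the Hansell-type description yields $f\in\Sigma_\beta^f(X,Y)$; I fix a $\sigma$-sfd base $\mathcal A=\bigcup_n\mathcal A_n$ of $f$ consisting of functionally ambiguous sets of class $\beta$ in $X$. Given $Z\in\mathcal Z$ and $g\in{\rm B}_\alpha(Y,Z)$, Theorem~\ref{th:LHB_general} supplies a $\sigma$-sfd base $\mathcal D=\bigcup_m\mathcal D_m$ of $g$ consisting of functionally ambiguous sets of class $\beta$ in $Y$, together with discrete families $(U_D:D\in\mathcal D_m)$ of functionally open sets with $\overline{D}\subseteq U_D$. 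Repeating the construction in the proof of Lemma~\ref{lem:compose_sigma_discr}, the family $\mathcal C=\bigcup_{n,m}\{A\cap f^{-1}(D):A\in\mathcal A_n,\ D\in\mathcal D_m,\ f(A)\subseteq U_D\}$ is a $\sigma$-sfd base for $g\circ f$; since $f^{-1}(D)$ is functionally ambiguous of class $\beta$ in $X$ (here I use $\mathcal M_\beta$-measurability of $f$ in full, not just ${\rm K}_\beta$-membership) and $A$ is too, every member of $\mathcal C$ is functionally ambiguous of class $\beta$. Hence $g\circ f\in\Sigma_\beta^f(X,Z)={\rm B}_\alpha(X,Z)$, and $f$ is the right ${\rm B}_\alpha$-compositor for $\mathcal Z$.

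\emph{Proof of (\ref{th:B_alpha_M_alpha:it:1})$\Rightarrow$(\ref{th:B_alpha_M_alpha:it:2}).} For the $\sigma$-discreteness, let $Z_0=\ell_\infty(Y)$ and $\varphi\colon Y\to Z_0$ a homeomorphic embedding; then $Z_0\in\mathcal Z$ and $\varphi$ is continuous, so $\varphi\circ f\in{\rm B}_\alpha(X,Z_0)=\Sigma_\beta^f(X,Z_0)$ has a $\sigma$-sfd base $\mathcal C$. Writing $\varphi(V)=W\cap\varphi(Y)$ with $W$ open in $Z_0$ and using injectivity of $\varphi$, we get $f^{-1}(V)=(\varphi\circ f)^{-1}(W)$ for every open $V\subseteq Y$, so $\mathcal C$ is a base for $f$ and $f$ is $\sigma$-strongly functionally discrete. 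For $\mathcal M_\beta$-measurability it suffices (Remark~\ref{rem:ambiguous}) to show $f^{-1}(A)\in\mathcal A_\beta(X)$ whenever $A=\bigcup_n M_n$ with $M_n\in\mathcal M_{\gamma_n}(Y)$, $\gamma_n<\beta$. If $\alpha<\omega_0$ then $\gamma_n+1\le\alpha$, so $g:=\sum_n2^{-n}\chi_{M_n}$ is a uniform limit of maps in ${\rm B}_\alpha(Y,\mathbb R)$, hence $g\in{\rm B}_\alpha(Y,\mathbb R)$ with $g^{-1}((0,+\infty))=A$; as $\mathbb R\in\mathcal Z$, $g\circ f\in{\rm B}_\alpha(X,\mathbb R)\subseteq{\rm K}_\alpha(X,\mathbb R)$ and $f^{-1}(A)=(g\circ f)^{-1}((0,+\infty))\in\mathcal A_\alpha(X)=\mathcal A_\beta(X)$. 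If $\alpha\ge\omega_0$ then $\gamma_n\le\alpha$; for each $n$ I build $h_n\in{\rm B}_{\gamma_n}(Y,[0,1])\subseteq{\rm B}_\alpha(Y,[0,1])$ with $h_n^{-1}(\{0\})=M_n$ --- continuous with zero-set $M_n$ if $\gamma_n=0$, and $h_n=\sum_k2^{-k}\chi_{M_{n,k}'}$ if $\gamma_n\ge1$, where $Y\setminus M_n=\bigcup_k M_{n,k}'$ with $M_{n,k}'\in\mathcal M_{\delta_{n,k}}(Y)$, $\delta_{n,k}<\gamma_n$. Since $[0,1]\in\mathcal Z$, $h_n\circ f\in{\rm B}_\alpha(X,[0,1])\subseteq{\rm K}_\alpha(X,[0,1])$, so $f^{-1}(M_n)=(h_n\circ f)^{-1}(\{0\})\in\mathcal M_\alpha(X)$ and $f^{-1}(A)=\bigcup_n f^{-1}(M_n)\in\mathcal A_{\alpha+1}(X)=\mathcal A_\beta(X)$. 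In both cases $f$ is $\mathcal M_\beta$-measurable.

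The main obstacle is the $\mathcal M_\beta$-measurability in (\ref{th:B_alpha_M_alpha:it:1})$\Rightarrow$(\ref{th:B_alpha_M_alpha:it:2}) when $\alpha\ge\omega_0$: because of the shift $\beta=\alpha+1$ in Theorem~\ref{th:LHB_general}, the characteristic function of a set of multiplicative class exactly $\alpha$ need not be of Baire class $\alpha$, so one cannot encode the whole set $A$ in a single Baire-$\alpha$ test function as in the finite case; instead each piece $M_n$ of $A$ must be detected by its own map $h_n\in{\rm B}_\alpha(Y,[0,1])$, placing $f^{-1}(M_n)$ in $\mathcal M_\alpha(X)$ via ${\rm K}_\alpha$-membership before the union is formed. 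A secondary subtlety, worth flagging in the write-up, is that condition~(\ref{th:B_alpha_M_alpha:it:2}) is genuinely stronger than $f\in\Sigma_\beta^f(X,Y)$ (a $\Sigma_\beta^f$-map need not be $\mathcal M_\beta$-measurable), which is exactly why in (\ref{th:B_alpha_M_alpha:it:2})$\Rightarrow$(\ref{th:B_alpha_M_alpha:it:1}) one must invoke $\mathcal M_\beta$-measurability itself, and not merely ${\rm K}_\beta$-membership, to keep a base of $g\circ f$ inside the ambiguous class~$\beta$.
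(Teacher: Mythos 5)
Your proof is correct and follows the same architecture as the paper's: Theorem~\ref{th:LHB_general} to translate ${\rm B}_\alpha$ into $\Sigma_\beta^f$, the composition lemma for $\sigma$-sfd bases, the embedding $Y\hookrightarrow\ell_\infty(Y)$ to extract $\sigma$-discreteness, and characteristic-function test maps for measurability. The two points of divergence are executional. In (\ref{th:B_alpha_M_alpha:it:2})$\Rightarrow$(\ref{th:B_alpha_M_alpha:it:1}) the paper does not rebuild an ambiguous base for $h=g\circ f$ by hand: it checks $h\in{\rm K}_\beta(X,Z)$ (preimages of closed sets, via $\mathcal M_\beta$-measurability of $f$) and $h\in\Sigma^f(X,Z)$ (Lemma~\ref{lem:compose_sigma_discr}) separately and then invokes the identity $\Sigma^f\cap{\rm K}_\beta=\Sigma_\beta^f$ once, for $h$ itself; your explicit base $\{A\cap f^{-1}(D)\}$ is a correct unwinding of the same identity and rightly isolates where full $\mathcal M_\beta$-measurability, rather than ${\rm K}_\beta$-membership, is used. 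In (\ref{th:B_alpha_M_alpha:it:1})$\Rightarrow$(\ref{th:B_alpha_M_alpha:it:2}) the ``main obstacle'' you describe for $\alpha\ge\omega_0$ is sidestepped in the paper by testing the right sets: for a \emph{functionally ambiguous} set $B$ of class $\beta$ one has $\chi_B\in{\rm H}_\beta(Y,\mathbb R)\subseteq{\rm B}_\alpha(Y,\mathbb R)$ by the classical Lebesgue--Hausdorff theorem (this is exactly where the shift $\beta=\alpha+1$ is absorbed), so $\chi_B\circ f\in{\rm B}_\alpha(X,\mathbb R)=\Sigma_\beta^f(X,\mathbb R)$ and $f^{-1}(B)=(\chi_B\circ f)^{-1}(\{1\})$ is functionally ambiguous of class $\beta$ in one stroke, Remark~\ref{rem:ambiguous} then giving $\mathcal M_\beta$-measurability. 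Your level-by-level construction of the maps $h_n$ is sound but in effect re-proves a special case of that theorem; the only step to keep an eye on is that $(h_n\circ f)^{-1}(\{0\})\in\mathcal M_\alpha(X)$ should be obtained by complementing the preimage of the open complement of $\{0\}$ (as membership in ${\rm K}_\alpha$ allows), not by writing $\{0\}$ as a countable intersection of open sets, which would cost an extra class.
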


\begin{proof} {\bf (\ref{th:B_alpha_M_alpha:it:1})$\Rightarrow$(\ref{th:B_alpha_M_alpha:it:2}).} Keeping in mind Remark~\ref{rem:ambiguous}, we consider a functionally ambiguous set $B$ of the class  $\beta$ in  $Y$ and its characteristic function  $g:Y\to \mathbb R$. Notice that $g\in {\rm H}_\beta(Y,\mathbb R)$, which implies that  $g\in {\rm B}_\alpha(Y,\mathbb R)$ by \cite[p.~393]{Kuratowski:Top:1}. Then $g\circ f\in {\rm B}_\alpha(X,\mathbb R)=\Sigma_\beta^f(X,\mathbb R)$. Since
$$
f^{-1}(B)=(g\circ f)^{-1}((0,2))=(g\circ f)^{-1}(\{1\}),
$$
the set  $f^{-1}(B)$ is functionally ambiguous of the class  $\beta$ in  $X$. Therefore, $f$ is  $\mathcal M_\beta$-measurable.

Now we consider the space $Z=\ell_\infty(Y)\in\mathcal Z$ and a homeomorphic embedding  $\varphi:Y\to Z$. Condition (\ref{th:B_alpha_M_alpha:it:1}) implies that the composition   $\tilde f=\varphi\circ f:X\to Z$ belongs to the class ${\rm B}_\alpha(X,Z)$. By \cite[Corollary 19]{Karlova:TA:2015}, $\tilde{f}\in\Sigma^f(X,Z)$. It is easy to see that a $\sigma$-sfd base $\mathcal B$ for $\tilde f$ is also a base for $f$. Hence, $f\in\Sigma^f(X,Y)$.

{\bf (\ref{th:B_alpha_M_alpha:it:2})$\Rightarrow$(\ref{th:B_alpha_M_alpha:it:1}).} Let $Z\in\mathcal Z$ and $g\in{\rm B}_\alpha(Y,Z)$. Denote $h=g\circ f$.
We show that  $h\in{\rm K}_\beta(X,Z)$. Let  $F$ be a closed set in $Z$.  Then  $g^{-1}(F)\in\mathcal M_\beta(Y)$, which implies that $h^{-1}(F)=f^{-1}(g^{-1}(F))\in\mathcal M_\beta(X)$, because $f$ is $\mathcal M_\beta$-measurable.

Notice that  $g\in \Sigma^f(Y,Z)$.  It follows from Lemma~\ref{lem:compose_sigma_discr} that $h$ is a $\sigma$-sfd map.

Hence, $h\in {\Sigma}_\beta^f(X,Z)$ according to~\cite[Theorem 6]{Karlova:TA:2015}. Theorem~\ref{th:LHB_general} implies that $h\in {\rm B}_\alpha(X,Z)$.
\end{proof}

\begin{rem}
  If $\alpha=0$, then conditions (\ref{th:B_alpha_M_alpha:it:1}) and (\ref{th:B_alpha_M_alpha:it:2}) are equivalent to the continuity of $f$.
\end{rem}

 A map  $f:X\to Y$ between topological spaces is {\it  functionally piecewise continuous} if there exists a cover $(F_n:n\in\mathbb N)$ of  $X$ by functionally closed sets such that each restriction $f|_{F_n}$ is continuous.

 For a topological space $X$ and a pair $(Y,B)$ of a topological space $Y$ and its subspace $B$ we shall write $(Y,B)\in {\rm AE}^f(X)$ if each continuous map $f:F\to B$ defined on a functionally closed set $F\subseteq X$ can be extended to a continuous map $g:X\to Y$. We define a space $Y$ to belong to the class $\sigma {\rm AE}^f(X)$ if there exists a countable cover $(Y_n:n\in\mathbb N)$ of $Y$ by closed $G_\delta$-sets such that $(Y,Y_n)\in {\rm AE}^f(X)$ for all $n\in\mathbb N$.

  \begin{thm}\label{tururu}
Let  $X$ be a topological space, $Y$ be a metrizable space. For a map $f:X\to Y$ the following conditions are equivalent:
    \begin{enumerate}
       \item\label{itt:1} $f\in {\rm B}_1^{\rm st}(X,Y)$;

      \item\label{itt:2} $f$ is functionally piecewise continuous;

      \item\label{itt:3} for any function $\varepsilon:Y\to \mathbb R^+$ there exists a neighborhood-map $U:X\to\tau(X)$ such that
    \begin{gather}\label{gath:char:comp}
      x,y\in U(x)\cap U(y)\,\,\Longrightarrow \,\, d_Y(f(x),f(y))<\min\{\varepsilon(f(x)),\varepsilon(f(y))\}
    \end{gather}
for all $x,y\in X$;

      \item\label{itt:4}  for any function $\varepsilon\in{\rm B}_1(Y,\mathbb R^+)$ here exists a neighborhood-map $U:X\to\tau(X)$ such that  (\ref{gath:char:comp}) holds for all $x,y\in X$;

     \item\label{itt:5}  $f$ is the right ${\rm B}_1$-compositor for the class $\mathcal Z$ of all metrizable connected and locally path-connected spaces;

      \item\label{itt:6} $f$ is $\mathcal M_1$-measurable and $\sigma$-strongly functionally discrete.
    \end{enumerate}
Then (\ref{itt:1})$\Rightarrow$(\ref{itt:2})$\Rightarrow$(\ref{itt:3})$\Rightarrow$(\ref{itt:4}) and (\ref{itt:5})$\Leftrightarrow$(\ref{itt:6}).

If  $X$  is a metrizable space, then  (\ref{itt:4})$\Rightarrow$(\ref{itt:5}).

If $X$ is a hereditarily Baire perfect paracompact space Preiss-Simon space, then (\ref{itt:6})$\Rightarrow$(\ref{itt:2}).

If $Y$ is a path-connected space and  $Y\in\sigma {\rm AE}^f(X)$, then (\ref{itt:2})$\Rightarrow$(\ref{itt:1}).
\end{thm}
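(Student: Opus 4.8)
The plan is to prove the listed implications in order, disposing of the ``unconditional'' steps (\ref{itt:1})$\Rightarrow$(\ref{itt:2})$\Rightarrow$(\ref{itt:3})$\Rightarrow$(\ref{itt:4}) and (\ref{itt:5})$\Leftrightarrow$(\ref{itt:6}) by hand and concentrating effort on the two conditional steps. For (\ref{itt:1})$\Rightarrow$(\ref{itt:2}): given continuous $f_n$ with $f_n\stackrel{\rm st}{\longrightarrow}f$, I fix a metric $d_Y$ on $Y$ and set $F_k=\bigcap_{m\ge k}\{x\in X:d_Y(f_m(x),f_{m+1}(x))=0\}$; each $F_k$ is a countable intersection of zero-sets, hence functionally closed, the $F_k$ cover $X$ by stable convergence, and $f|_{F_k}=f_k|_{F_k}$ is continuous. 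For (\ref{itt:2})$\Rightarrow$(\ref{itt:3}): after replacing the cover by its partial unions (still functionally closed, still with continuous restrictions by the closed pasting lemma) I may assume the cover $(F_n)$ increasing with $F_0=\varnothing$; for $x\in X$ put $n(x)=\min\{n:x\in F_n\}$, pick an open $V_x\ni x$ with $d_Y(f(z),f(x))<\tfrac12\varepsilon(f(x))$ for every $z\in V_x\cap F_{n(x)}$ (possible as $f|_{F_{n(x)}}$ is continuous at $x$), and define $U(x)=V_x\cap(X\setminus F_{n(x)-1})$. If $x,y\in U(x)\cap U(y)$ then $y\notin F_{n(x)-1}$ and $x\notin F_{n(y)-1}$ force $n(x)=n(y)$, so $y\in V_x\cap F_{n(x)}$ and $x\in V_y\cap F_{n(y)}$ and therefore $d_Y(f(x),f(y))<\tfrac12\min\{\varepsilon(f(x)),\varepsilon(f(y))\}$. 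The step (\ref{itt:3})$\Rightarrow$(\ref{itt:4}) is immediate, and (\ref{itt:5})$\Leftrightarrow$(\ref{itt:6}) is exactly the case $\alpha=1$ of Theorem~\ref{th:B_alpha_M_alpha}.

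For (\ref{itt:4})$\Rightarrow$(\ref{itt:5}) when $X$ is metrizable I would apply (\ref{itt:4}) to each constant function $\varepsilon\equiv c>0$ and pass from the resulting neighborhood-map $U$ to $\delta(x)=d_X(x,X\setminus U(x))>0$; then $d_X(x,y)<\min\{\delta(x),\delta(y)\}$ forces $x,y\in U(x)\cap U(y)$, so $d_Y(f(x),f(y))<c$, and Lemma~\ref{lem:ed_implies_sigma} (applied for every $c$) gives $f\in\Sigma_1^f(X,Y)$. Such a map is $\sigma$-strongly functionally discrete and, since its base consists of functionally ambiguous sets of class $1$, also $\mathcal M_1$-measurable; thus (\ref{itt:6}) holds and (\ref{itt:5}) follows from the equivalence just established.

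The implication (\ref{itt:6})$\Rightarrow$(\ref{itt:2}) (with $X$ hereditarily Baire, perfect, paracompact and Preiss--Simon) is the heart of the matter and the step I expect to cause the most trouble. From $\mathcal M_1$-measurability one gets $f\in{\rm K}_1(X,Y)\subseteq{\rm H}_1(X,Y)$, and then, $X$ being hereditarily Baire (and perfectly normal), a theorem of Koumoullis~\cite{Kum} yields that $f$ is barely continuous. The remaining task is to promote a barely continuous $\sigma$-strongly functionally discrete map to a functionally piecewise continuous one, which I would do by a transfinite ``derived cover'' argument in the spirit of the Jayne--Rogers~\cite{JayneRogers} and Banakh--Bokalo~\cite{BaBo} theorems: starting from $P_0=X$, at each step remove from the functionally closed set $P_\xi$ a relatively functionally open subset $W$ such that $f$ restricts continuously to the $P_\xi$-closure of $W$, intersecting over $\xi$ at limit stages. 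The Preiss--Simon property and perfectness of $X$ are what guarantee progress at every non-trivial stage, while perfect paracompactness together with the $\sigma$-strong functional discreteness of $f$ are used to bound the length of the process by a countable ordinal; the countably many functionally closed ``shells'' then display $f$ as functionally piecewise continuous. Keeping the transfinite length countable while simultaneously keeping the pieces functionally---not merely topologically---closed is the delicate issue.

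For (\ref{itt:2})$\Rightarrow$(\ref{itt:1}), finally (with $Y$ path-connected and $Y\in\sigma{\rm AE}^f(X)$): I would fix a cover $(Y_m)$ of $Y$ by closed $G_\delta$-sets (hence zero-sets, as $Y$ is metrizable) with $(Y,Y_m)\in{\rm AE}^f(X)$, take the functionally closed cover $(F_n)$ from (\ref{itt:2}), and form $F_{n,m}=F_n\cap f^{-1}(Y_m)$, functionally closed sets that cover $X$ with $f|_{F_{n,m}}$ continuous into $Y_m$. Enumerating these as $(A_j)_{j}$ and putting $D_k=A_1\cup\dots\cup A_k$, the restriction $f|_{D_k}$ is continuous and $D_k\nearrow X$. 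By induction on $k$---using $(Y,Y_{m_j})\in{\rm AE}^f(X)$ and path-connectedness of $Y$ to bridge the successive image-pieces $Y_{m_j}$, as in the extension technique of~\cite{Karlova:TA:2015}---I extend $f|_{D_k}$ to a continuous $g_k:X\to Y$. Since $D_k\nearrow X$, for each $x$ we have $g_k(x)=f(x)$ for all large $k$, so $g_k\stackrel{\rm st}{\longrightarrow}f$ and $f\in{\rm B}_1^{\rm st}(X,Y)$.
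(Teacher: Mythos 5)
Your handling of the unconditional implications is essentially sound: (\ref{itt:1})$\Rightarrow$(\ref{itt:2}) is the paper's argument with the diagonal $\Delta$ replaced by the zero sets $\{x:d_Y(f_m(x),f_{m+1}(x))=0\}$, and your (\ref{itt:2})$\Rightarrow$(\ref{itt:3}) via the increasing cover and $U(x)=V_x\cap(X\setminus F_{n(x)-1})$ is a clean, self-contained substitute for the paper's appeal to Lemma~\ref{lem:neighborhood_map}. Your sketches of (\ref{itt:6})$\Rightarrow$(\ref{itt:2}) and (\ref{itt:2})$\Rightarrow$(\ref{itt:1}) follow the Banakh--Bokalo arguments that the paper merely cites, so I will not press on their missing details. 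The genuine failure is in (\ref{itt:4})$\Rightarrow$(\ref{itt:5}).

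By applying (\ref{itt:4}) only to constant functions $\varepsilon\equiv c$ you extract nothing more than the hypothesis of Lemma~\ref{lem:ed_implies_sigma}, i.e.\ you prove $f\in\Sigma_1^f(X,Y)$ --- which, by Theorem~\ref{th:LHB_general}, is essentially the statement that $f$ is of the first Baire class. That is strictly weaker than being a right ${\rm B}_1$-compositor, and the step by which you try to close the gap is false: a map in $\Sigma_1^f(X,Y)$ need \emph{not} be $\mathcal M_1$-measurable. A $\sigma$-sfd base of functionally ambiguous class-$1$ sets controls preimages of \emph{open} sets (so $f\in{\rm K}_1(X,Y)$), but preimages of sets in $\mathcal M_1(Y)$ then land only in $\mathcal M_2(X)$. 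Concretely, $\chi_{\mathbb Q}\in{\rm B}_1(\mathbb R,\mathbb R)=\Sigma_1^f(\mathbb R,\mathbb R)$, yet $\chi_{\mathbb Q}^{-1}(\{1\})=\mathbb Q$ is not $G_\delta$, so $\chi_{\mathbb Q}$ is not $\mathcal M_1$-measurable and indeed is not a right ${\rm B}_1$-compositor; it satisfies your constant-$\varepsilon$ condition but not condition (\ref{itt:4}) for general Baire-one $\varepsilon$. The whole point of (\ref{itt:4}) is the non-constant $\varepsilon$, and the paper's proof uses it: for $Z\in\mathcal Z$ and $g\in{\rm B}_1(Y,Z)=\Sigma_1^f(Y,Z)$, Lemma~\ref{lem:sigma_implies_ed} supplies a Baire-one modulus $\gamma\in{\rm B}_1(Y,\mathbb R^+)$ with $d_Y(u,v)<\min\{\gamma(u),\gamma(v)\}\Rightarrow d_Z(g(u),g(v))<\varepsilon$; feeding $\gamma$ into (\ref{itt:4}) and setting $\delta(x)=d_X(x,X\setminus U(x))$ shows that $h=g\circ f$ satisfies the hypothesis of Lemma~\ref{lem:ed_implies_sigma}, whence $h\in\Sigma_1^f(X,Z)={\rm B}_1(X,Z)$ directly. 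You must run some version of this composition argument; the detour through (\ref{itt:6}) as you set it up cannot be repaired.
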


\begin{proof} {\bf (\ref{itt:1})$\Rightarrow$(\ref{itt:2}).} Denote $\Delta=\{(y,y):y\in Y\}$ and let $(f_n)_{n=1}^\infty$ be a sequence of continuous maps $f_n:X\to Y$ which converges stably to $f$. For  $k,n\in\mathbb N$ we put
  \begin{gather*}
X_{k,n}=\{x\in X:f_k(x)=f_n(x)\}\quad\mbox{and}\quad  X_n=\bigcap\limits_{k=n}^\infty X_{k,n}.
  \end{gather*}
Clearly, $X_n\subseteq X_{n+1}$, $X=\bigcup\limits_{n=1}^\infty X_n$ and $f_n|_{X_n}=f|_{X_n}$ for every $n\in\mathbb N$.

For all $x\in X$ and $k,n\in\mathbb N$ we consider a continuous map $h_{k,n}:X\to Y^2$,  $h_{k,n}(x)=(f_k(x),f_n(x))$. Since
$X_{k,n}=h_{k,n}^{-1}(\Delta)$ and the set $\Delta$ is functionally closed in $Y^2$, $X_{k,n}$ is functionally closed in $X$.

{\bf (\ref{itt:2})$\Rightarrow$(\ref{itt:3}).} Let $(F_n)_{n=1}^\infty$ be a sequence of functionally closed sets which covers   $X$ and the restriction   $f|_{F_n}$ is continuous for every $n$. We put $A_1=F_1$ and $A_n=F_n\setminus (F_1\cup\dots\cup F_{n-1})$ for $n>1$. Fix  $\varepsilon:Y\to\mathbb R^+$. Lemma~\ref{lem:neighborhood_map} allows to take a neighborhood-map $V:X\to\tau (X)$ satisfying condition (\ref{gath:neighborhood_map_lem}) for some $\sigma$-sfd cover $\mathcal A\prec (A_n:n\in\mathbb N)$ of the space $X$ by functionally closed sets. For every   $n\in\mathbb N$ we use the continuity of  the restriction $f_n|_{A_n}$ and choose a neighborhood-map $U_n:A_n\to\tau(A_n)$ such that for all  $x,y\in X$ the inclusion  $x,y\in U_n(x)\cap U_n(y)$ implies the inequality  $d_Y(f_n(x),f_n(y))<\min\{\varepsilon(f_n(x)),\varepsilon(f_n(y))\}$. For every $n$ we take a map $V_n:A_n\to\tau(X)$ such that $V_n(x)\cap A_n=U_n(x)$ for all $x\in A_n$. Now for all $x\in X$ we put
$$
U(x)=V(x)\cap V_n(x),
$$
if $x\in A_n$ for some $n$. It is easy see that the neighborhood-map $U:X\to\tau(X)$ satisfies the required properties.

The implication {\bf (\ref{itt:3})$\Rightarrow$(\ref{itt:4})} is evident.

Assume that  $X$ is a metrizable space and prove {\bf (\ref{itt:4})$\Rightarrow$(\ref{itt:5}).} Let  $Z\in\mathcal Z$, $d_X$ and $d_Z$ be metrics on  $X$ and  $Z$, which generate topological structures of these spaces and let  $g\in {\rm B}_1(Y,Z)$. We shoe that the composition $h=g\circ f$ belongs to  $\Sigma_1^f(X,Z)$. Fix $\varepsilon>0$. Applying Theorem~\ref{th:LHB_general},  we get $g\in\Sigma_1^f(Y,Z)$. Then Lemma~\ref{lem:sigma_implies_ed}~(b) implies that there exists a function  $\gamma\in{\rm B}_1(Y,\mathbb R^+)$ such that for all  $x,y\in Y$ we have
$$
d_Y(x,y)<\min\{\gamma(x),\gamma(y)\}\,\,\,\Longrightarrow\,\,\, d_Z(g(x),g(y))<\varepsilon.
$$
According to condition (\ref{itt:4}) there exists a neighborhood-map $U:X\to\tau(X)$ such that $x,y\in X$
    \begin{gather*}
      x,y\in U(x)\cap U(y)\,\,\Longrightarrow \,\, d_Y(f(x),f(y))<\min\{\gamma(f(x)),\gamma(f(y))\}.
    \end{gather*}
For all  $x\in X$ we put
$$
\delta(x)=d_X(x,X\setminus U(x))
$$
and obtain a continuous function $\delta:X\to\mathbb R^+$ which satisfies the condition of Lemma~\ref{lem:ed_implies_sigma} for the function $h:X\to Z$. Hence, $h\in\Sigma^f_1(X,Z)$. Again by Theorem~\ref{th:LHB_general} we get  $h\in{\rm B}_1(X,Z)$.

The equivalence {\bf (\ref{itt:5})$\Leftrightarrow$(\ref{itt:6})} was proved in Theorem~\ref{th:B_alpha_M_alpha}.

The implication  {\bf (\ref{itt:6})$\Rightarrow$(\ref{itt:2})} follows from~\cite[Theorem 8.1]{BaBo}.

The implication {\bf (\ref{itt:2})$\Rightarrow$(\ref{itt:1})} can be proved completely similarly to the proof of Theorem~ 6.3 from  \cite{BaBo}, which shows this implication for a normal space $X$ and a path-connected $Y\in{\sigma}{\rm AE}^f(X)$.
\end{proof}

\begin{rem}
 According to Proposition~\ref{prop:stable_in_comp}, a stable limit of right ${\rm B}_1$-compositors for the class of metrizable connected and locally path-connected spaces is the right ${\rm B}_2$-compositor. Then each function $f\in{\rm B}_2^{\rm st}(\mathbb R,\mathbb R)$ is the right ${\rm B}_2$-compositor. We observe that the class of all right ${\rm B}_2$-compositors is strictly wider than the class  ${\rm B}_2^{\rm st}(\mathbb R,\mathbb R)$. Indeed, consider the increasing function   $f:\mathbb R\to\mathbb R$,
  $$
  f(x)=\sum\limits_{r_n\le x}\frac{1}{2^n},
  $$
where $\mathbb Q=\{r_n:n\in\mathbb N\}$.Since  $f\in{\rm B}_1(\mathbb R,\mathbb R)$,  $f$ is $\mathcal M_2$-measurable. By Theorem~\ref{th:B_alpha_M_alpha},   $f$ is the right  ${\rm B}_2$-compositor. But the discontinuity points set of  $f$ is $\mathbb Q$. Therefore,   $f\not\in \bigcup\limits_{\alpha<\omega_1}{\rm B}_\alpha^{\rm st}(\mathbb R,\mathbb R)$.
\end{rem}

\section{Left compositors}
 Let
 \begin{itemize}
  \item  ${\rm H}_\alpha^0(X,Y)$ be a set of all maps from ${\rm H}_\alpha(X,Y)$ with a finite range;

\item ${\rm H}_{\alpha}^{\mathcal M}(X,Y)$  be a set of all maps from ${\rm H}_\alpha(X,Y)$ with a metrizable separable range.
\end{itemize}

The next result follows from~\cite[pp. 389--391]{Kuratowski:Top:1}.

\begin{thm}\label{th:cor:Kuratowski}
Let $X$ be a perfectly normal space, $Y$ be a topological space and $\alpha\in [1,\omega_1)$. Then
\begin{enumerate}
\item[(i)] ${\rm H}_{\alpha+1}^{\mathcal M}(X,Y)\subseteq\overline{{\rm H}_{\alpha}^0(X,Y)}^{\,\,{\rm p}}$, if $\alpha$ is isolated;

\item[(ii)] ${\rm H}_{\alpha+1}^{\mathcal M}(X,Y)\subseteq\overline{{\rm H}_{<\alpha}^0(X,Y)}^{\,\,{\rm p}}$, if  $\alpha$ is limit.
\end{enumerate}
\end{thm}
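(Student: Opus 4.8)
The theorem is essentially a restatement, in the language of functional Borel classes, of the classical Lebesgue--Hausdorff representation of a map of prescribed Baire class as a pointwise limit of finite-valued maps of lower class; the plan is to reduce it to the material on pp.~389--391 of \cite{Kuratowski:Top:1} rather than to reprove it from scratch. Since $X$ is perfectly normal, every closed subset of $X$ is a zero set and every open subset a cozero set, so the functional additive and multiplicative classes coincide with the ordinary ones (in particular ${\rm H}_{\alpha}={\rm K}_{\alpha}$) and the Reduction Theorem for sets of the functional classes \cite[Lemma~3.1]{Karlova:CMUC:2013} is available in the form needed below.

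First I would reduce the range. Fix a homeomorphic embedding $\iota\colon Y\hookrightarrow[0,1]^{\omega}$ and a countable dense set $\{y_{j}:j\in\mathbb N\}\subseteq Y$. As $\iota$ is a homeomorphism onto its image, $f\in{\rm H}_{\alpha+1}(X,Y)$ if and only if $\iota\circ f\in{\rm H}_{\alpha+1}(X,[0,1]^{\omega})$, and finite-valued maps $g_{n}\colon X\to Y$ converge pointwise to $f$ exactly when $\iota\circ g_{n}\to\iota\circ f$ pointwise. Hence it suffices to approximate $\iota\circ f$ pointwise by maps that are finite-valued, of the required class, and take their finitely many values in $\iota(Y)$; looking at coordinates, this becomes a problem about $[0,1]$-valued functions, with the admissible values at the $n$-th stage cut down to the finite set $\{y_{1},\dots,y_{n}\}$ so that the vector-valued approximants never leave $Y$.

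It remains to treat a function $f\colon X\to[0,1]$ of the $(\alpha+1)$'st functional Lebesgue class. For each dyadic $c$ the sublevel set $f^{-1}([0,c))$ lies in $\mathcal A_{\alpha+1}(X)$; by the Reduction Theorem one may bring the countably many sublevel sets of $f$, simultaneously, into a normal form exhibiting them through sets of classes $<\alpha+1$ in such a way that, after truncating all the occurring representations at level $m$ and using perfect normality of $X$, one obtains a finite-valued map $h_{m}\colon X\to[0,1]$ whose level sets are functionally ambiguous of class $\alpha$ when $\alpha$ is isolated and functionally ambiguous of some class $<\alpha$ when $\alpha$ is limit, and such that $h_{m}(x)\to f(x)$ for every $x$ (each sublevel relation true of $x$ being witnessed at a finite level). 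Thus $h_{m}\in{\rm H}_{\alpha}^{0}$ (resp.\ $h_{m}\in\bigcup_{\beta<\alpha}{\rm H}_{\beta}^{0}$) and $h_{m}\to f$ pointwise; feeding this back through the range reduction yields the assertion. The hard part is exactly this last step --- reconciling, in one construction, the three demands that the approximants be finite-valued, of the lower class, and (after returning through the range reduction) valued in $Y$ --- and for $\alpha$ a limit ordinal it additionally requires a diagonalization over the class indices $\beta$; all of this is precisely what is carried out on pp.~389--391 of \cite{Kuratowski:Top:1}.
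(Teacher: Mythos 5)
Your proposal is correct and takes essentially the same route as the paper: the authors give no argument at all beyond the remark that the statement ``follows from \cite[pp.~389--391]{Kuratowski:Top:1}'', i.e.\ from the Lebesgue--Hausdorff representation of class-$(\alpha+1)$ maps with separable metrizable range as pointwise limits of finite-valued maps of class $\alpha$ (resp.\ $<\alpha$), which is exactly where your reduction ends up. Your added remarks on perfect normality supplying the Reduction Theorem, and on keeping the finitely many values inside a countable dense subset of $Y$, are the correct points to check when transplanting Kuratowski's metric-space argument to this setting.
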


We call a topological space  $X$  {\it densely connected}, if for any open non-empty sets $U_1,\dots,U_n$ in $X$ there exists a continuous map $\gamma:[1,n]\to X$ such that $\gamma(i)\in U_i$ for every $i\in\{1,\dots,n\}$.

Notice that each space with dense path-connected subspace is densely connected.

  \begin{lem}\label{lemma:almostBaire}
    Let $X$ be a perfectly normal space, $Y$ be a densely connected first countable $T_1$-space and $\alpha\in[1,\omega_1)$. Then
    \begin{enumerate}
      \item\label{it:lemma:almostBaire:1} ${\rm H}_\alpha^0(X,Y)\subseteq{\rm B}_\alpha(X,Y)$, if $\alpha<\omega_0$;

      \item\label{it:lemma:almostBaire:2} ${\rm H}_{\alpha+1}^0(X,Y)\subseteq{\rm B}_\alpha(X,Y)$, if $\alpha\ge\omega_0$.
    \end{enumerate}
  \end{lem}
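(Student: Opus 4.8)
The plan is to reduce the problem to the known inclusions ${\rm H}_{\alpha+1}^{\mathcal M}(X,Y)\subseteq\overline{{\rm H}_{\alpha}^0(X,Y)}^{\,{\rm p}}$ (Theorem~\ref{th:cor:Kuratowski}) together with the definition of the Baire classes, so the real content is: a finite-valued map of functionally additive class $\alpha$ can be written as a pointwise limit of maps of strictly lower Baire class. Let me treat case~(\ref{it:lemma:almostBaire:1}); case~(\ref{it:lemma:almostBaire:2}) is identical after shifting indices. I would argue by induction on $\alpha\in[1,\omega_0)$. Fix $f\in{\rm H}_\alpha^0(X,Y)$ with range $\{y_1,\dots,y_k\}\subseteq Y$, and put $E_i=f^{-1}(y_i)$, so $(E_1,\dots,E_k)$ is a partition of $X$ into pairwise disjoint sets each of which is simultaneously in $\mathcal A_\alpha(X)$ and (being the complement of the union of the others) in $\mathcal M_\alpha(X)$; i.e.\ each $E_i$ is functionally ambiguous of class $\alpha$.

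First I would handle the base step $\alpha=1$. Here each $E_i$ is an ambiguous $F_\sigma$-set, so $E_i=\bigcup_m F_{i,m}$ with $F_{i,m}$ functionally closed and increasing in $m$; since $X$ is perfectly normal, functionally closed coincides with closed, and we may also assume the $F_{i,m}$ for fixed $m$ are pairwise disjoint (replace $F_{i,m}$ by $F_{i,m}\setminus\bigcup_{j<i}E_j$, which stays closed because $\bigcup_{j<i}E_j$ is open on the relevant set — one has to be slightly careful, but the standard reduction-type argument gives a partition of $X$ into functionally ambiguous sets whose ``levels'' are closed). The key geometric input is dense connectedness of $Y$: given the finitely many disjoint nonempty closed pieces at level $m$, choose representative points, join them by a continuous path $\gamma_m:[1,k]\to Y$ through neighborhoods of the $y_i$, and use perfect normality of $X$ to build a continuous ``ruler'' function $X\to[1,k]$ that is constant $=i$ on the $i$th closed piece; composing with $\gamma_m$ (after reparametrizing so the endpoints sit exactly at $y_1,\dots,y_k$, which $T_1$ plus first countability of $Y$ let us arrange via a nested neighborhood base) produces a continuous map $g_m:X\to Y$ agreeing with $f$ on the first $m$ levels. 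Then $g_m\to f$ pointwise, so $f\in{\rm B}_1(X,Y)$.

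For the inductive step, suppose the inclusion holds for all ordinals $<\alpha$. Since $f$ has finite (hence metrizable separable) range and $f\in{\rm H}_\alpha^0(X,Y)\subseteq{\rm H}_{(\alpha-1)+1}^{\mathcal M}(X,Y)$, Theorem~\ref{th:cor:Kuratowski}(i) gives a sequence $g_n\in{\rm H}_{\alpha-1}^0(X,Y)$ with $g_n\to f$ pointwise; each $g_n$ again has finite range, so by the induction hypothesis $g_n\in{\rm B}_{\alpha-1}(X,Y)$, whence $f\in\overline{\bigcup_{\beta<\alpha}{\rm B}_\beta(X,Y)}^{\,{\rm p}}={\rm B}_\alpha(X,Y)$. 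The limit-ordinal case for part~(\ref{it:lemma:almostBaire:2}) uses Theorem~\ref{th:cor:Kuratowski}(ii) in place of (i): it expresses $f\in{\rm H}_{\alpha+1}^{0}$ as a pointwise limit of maps in ${\rm H}_{<\alpha}^{0}$, each of which lies in some ${\rm B}_\beta$, $\beta<\alpha$, by the already-established lower cases.

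The main obstacle I anticipate is the base step: producing the continuous approximants $g_m$ using only dense connectedness of $Y$ rather than, say, path-connectedness or contractibility. One must (a) organize the ambiguous sets $E_i$ into an increasing exhaustion by disjoint closed ``stairs'' — a reduction-theorem bookkeeping argument that is routine but fiddly when $k>2$ — and (b) convert the combinatorial path $\gamma_m:[1,k]\to Y$ through prescribed neighborhoods into a genuine continuous map $X\to Y$ that is \emph{exactly} $f$ on the already-handled part; step (b) forces the endpoints of the relevant subarcs to land precisely on the values $y_i$, which is where first countability and the $T_1$ separation of $Y$ enter (to shrink neighborhoods down to the points). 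Everything else is formal manipulation of the Baire hierarchy and an appeal to Theorem~\ref{th:cor:Kuratowski}.
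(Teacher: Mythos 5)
There is a genuine gap in your base step, and it is precisely at the point you flag as the ``main obstacle''. You require the continuous approximants $g_m$ to agree with $f$ \emph{exactly} on the first $m$ levels, and to arrange this you claim that $T_1$ plus first countability let you reparametrize the path $\gamma_m:[1,k]\to Y$ ``so the endpoints sit exactly at $y_1,\dots,y_k$''. Dense connectedness does not give this: it only provides a path meeting arbitrary prescribed open sets, not one passing through prescribed \emph{points}. Indeed the values $y_i$ need not lie on any nonconstant path in $Y$ at all: take $Y$ to be the closure of the topologist's sine curve $\{(x,\sin(1/x)):0<x\le 1\}\cup(\{0\}\times[-1,1])$, which is metrizable and has a dense path-connected subspace (hence is densely connected in the paper's sense), yet a point of the limit segment cannot be joined by a path to a point of the curve. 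So no continuous $g_m$ can take the exact value $y_1=(0,0)$ on one closed piece and the exact value $y_2=(1,\sin 1)$ on another while being built from a path. The correct (and the paper's) resolution is to drop exactness entirely: choose a decreasing neighborhood base $(V_{i,k})_k$ at each $y_i$ (this is where first countability and $T_1$ are really used), let $\gamma_k:[1,n]\to Y$ be a path with $\gamma_k(i)\in V_{i,k}$ only, and set $f_k=\gamma_k\circ g_k$; then $f_k(x)\in V_{i,k}$ for $x$ in the $k$-th level of $f^{-1}(y_i)$, so $f_k\to f$ pointwise (not stably), which is all that is needed for $f\in{\rm B}_\alpha(X,Y)$.

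Two further remarks on structure. First, the paper does not induct on finite $\alpha$ at all in part~(\ref{it:lemma:almostBaire:1}): it treats every $\alpha<\omega_0$ in one stroke by citing a lemma that converts the disjoint family $(A_{i,k}:i=1,\dots,n)$ of sets of multiplicative class $<\alpha$ into a single ``ruler'' function $g_k\in{\rm B}_{<\alpha}(X,[1,n])$ with $g_k^{-1}(i)=A_{i,k}$, and then composes with $\gamma_k$; this sidesteps the ``fiddly'' disjointification of closed stairs you worry about. Your alternative of inducting from $\alpha=1$ via Theorem~\ref{th:cor:Kuratowski}(i) is a legitimate route for the inductive step (the approximants it produces do have finite range, so the hypothesis propagates), but it makes the $\alpha=1$ case carry all the weight, and that is exactly the case your construction does not close. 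Your treatment of part~(\ref{it:lemma:almostBaire:2}) via Theorem~\ref{th:cor:Kuratowski}(ii) and the lower cases matches the paper, though you should spell out the isolated case $\alpha=\lambda+m$ by a secondary induction on $m$ rather than waving at ``shifting indices''.
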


  \begin{proof} {\bf (\ref{it:lemma:almostBaire:1}).} Assume $\alpha<\omega_0$, $f\in {\rm H}_\alpha^0(X,Y)$, $f(X)=\{y_1,\dots,y_n\}$, where $y_i\ne y_j$ for  $i\ne j$, and $A_i=f^{-1}(y_i)$, $i=1,\dots,n$. Then $X=\bigcup\limits_{i=1}^n A_i$ and each set  $A_i$ is ambiguous of the class $\alpha$. For every  $i=1,\dots,n$ we take an increasing sequence  $(A_{i,k})_{k=1}^\infty$ of sets of multiplicative classes $<\alpha$ and a decreasing sequence  $(V_{i,k})_{k=1}^\infty$ of open neighborhoods of $y_i$ such that
    \begin{gather*}
      A_i=\bigcup\limits_{k=1}^\infty A_{i,k}\quad\mbox{and}\quad  \{y_i\}=\bigcap\limits_{k=1}^\infty V_{i,k}.
    \end{gather*}

Since for every $k\in\mathbb N$ the family $(A_{i,k}:i=1,\dots,n)$ is disjoint, it follows from \cite[Lemma 2.1]{Karlova:AGT_2012} that there exists a function $g_k\in {\rm B}_{<\alpha}(X,[1,n])$ such that
    $$
    A_{i,k}=g_k^{-1}(i)
    $$
for all $i=1,\dots,n$. Moreover, since $Y$ is densely connected, for every $k\in\mathbb N$ we choose a continuous map $\gamma_k:[1,n]\to Y$ such that
    $$
    \gamma_k(i)\in V_{i,k}
    $$
  for  $i=1,\dots,n$. We put
    $$
    f_k=\gamma_k\circ g_k
    $$
    for every $k\in\mathbb N$
and obtain a sequence $(f_k)_{k=1}^\infty$ of maps $f_k\in{\rm B}_{<\alpha}(X,Y)$ which is pointwise convergent to $f$ on $X$. Hence, $f\in {\rm B}_\alpha(X,Y)$.

    {\bf (\ref{it:lemma:almostBaire:2}).} We argue by the induction on  $\alpha$. Let $\alpha=\omega_0$ and $f\in {\rm H}_{\alpha+1}^0(X,Y)$. By Theorem~\ref{th:cor:Kuratowski} there exists a sequence of maps  $f_n\in{\rm H}_{n}^0(X,Y)$ which is pointwise convergent to $f$ on $X$. It follows from the previous arguments that  $f_n\in {\rm B}_n(X,Y)$. Therefore,  $f\in {\rm B}_{\omega_0}^0(X,Y)$.

    Now we suppose that (\ref{it:lemma:almostBaire:2}) is true for all $\beta\in[\omega_0,\alpha)$, where $\alpha<\omega_1$, and consider a map  $f\in {\rm H}_{\alpha+1}^0(X,Y)$.

   Let $\alpha$ be a limit ordinal. Applying Theorem~\ref{th:cor:Kuratowski} we choose a sequence of maps $(f_n)_{n=1}^\infty$ which converges pointwise to  $f$ on  $X$ and $f_n\in {\rm H}_{<\alpha}^0(X,Y)$. By the inductive assumption we have $f_n\in {\rm B}_{<\alpha}(X,Y)$ which implies that  $f\in {\rm B}_{\alpha}(X,Y)$.

If $\alpha$ is isolated, then $\alpha=\lambda+m$,  where $\lambda$ is limit and  $m\in\mathbb N$.  By the induction on $m$ we obtain that  ${\rm H}_{\lambda+m+1}^0(X,Y)\subseteq {\rm B}_{\lambda+m}(X,Y)$.
  \end{proof}

Theorem~\ref{th:cor:Kuratowski}  and Lemma~\ref{lemma:almostBaire} imply
  \begin{thm}\label{th:high_classes_sep}
Let $X$ be a perfectly normal space, $Y$ be a first countable densely connected $T_1$-space and $\alpha\in[1,\omega_1)$. Then
    \begin{enumerate}
      \item ${\rm H}_{\alpha+1}^{\mathcal M}(X,Y)\subseteq{\rm B}_{\alpha+1}(X,Y)$, if $\alpha<\omega_0$;

      \item ${\rm H}_{\alpha+1}^{\mathcal M}(X,Y)\subseteq{\rm B}_\alpha(X,Y)$, if $\alpha\ge\omega_0$.
    \end{enumerate}
  \end{thm}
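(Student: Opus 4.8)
The plan is simply to chain the two preceding results. Given $f\in{\rm H}_{\alpha+1}^{\mathcal M}(X,Y)$, Theorem~\ref{th:cor:Kuratowski} writes $f$ as a pointwise limit of a sequence of finite-valued maps of Lebesgue class strictly below $\alpha+1$; Lemma~\ref{lemma:almostBaire} then identifies each such finite-valued map as a map of a definite Baire class; and passing to the pointwise limit raises the Baire index by one, which lands exactly in the class asserted. The bookkeeping with the threshold $\omega_0$ and with isolated versus limit ordinals is precisely what forces the two separate clauses in the statement, and it is the only point requiring any real care.

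First I would settle (1), so suppose $1\le\alpha<\omega_0$. Then $\alpha$ is isolated, so Theorem~\ref{th:cor:Kuratowski}(i) yields ${\rm H}_{\alpha+1}^{\mathcal M}(X,Y)\subseteq\overline{{\rm H}_\alpha^0(X,Y)}^{\,\,{\rm p}}$, while Lemma~\ref{lemma:almostBaire}(\ref{it:lemma:almostBaire:1}) yields ${\rm H}_\alpha^0(X,Y)\subseteq{\rm B}_\alpha(X,Y)$; hence ${\rm H}_{\alpha+1}^{\mathcal M}(X,Y)\subseteq\overline{{\rm B}_\alpha(X,Y)}^{\,\,{\rm p}}\subseteq{\rm B}_{\alpha+1}(X,Y)$. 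For (2) with $\alpha\ge\omega_0$ isolated, I would write $\alpha=\beta+1$; since $\omega_0$ is limit we have $\beta\ge\omega_0$, so Theorem~\ref{th:cor:Kuratowski}(i) gives ${\rm H}_{\alpha+1}^{\mathcal M}(X,Y)\subseteq\overline{{\rm H}_{\beta+1}^0(X,Y)}^{\,\,{\rm p}}$ and Lemma~\ref{lemma:almostBaire}(\ref{it:lemma:almostBaire:2}) gives ${\rm H}_{\beta+1}^0(X,Y)\subseteq{\rm B}_\beta(X,Y)$, whence ${\rm H}_{\alpha+1}^{\mathcal M}(X,Y)\subseteq\overline{{\rm B}_\beta(X,Y)}^{\,\,{\rm p}}\subseteq{\rm B}_{\beta+1}(X,Y)={\rm B}_\alpha(X,Y)$.

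It remains to handle (2) with $\alpha\ge\omega_0$ a limit ordinal. Here Theorem~\ref{th:cor:Kuratowski}(ii) produces a sequence $(f_n)_{n=1}^\infty$ converging pointwise to $f$ with $f_n\in{\rm H}_{\gamma_n}^0(X,Y)$ and $\gamma_n<\alpha$. I would first record the elementary fact that a functionally ambiguous set of class $\gamma$ is also functionally ambiguous of class $\gamma+1$ (since $\mathcal M_\gamma(X)\subseteq\mathcal M_{\gamma+1}(X)$ and $\mathcal A_\gamma(X)\subseteq\mathcal A_{\gamma+1}(X)$), so that ${\rm H}_\gamma^0(X,Y)\subseteq{\rm H}_{\gamma+1}^0(X,Y)$ for every $\gamma$. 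Then for each $n$: if $\gamma_n<\omega_0$ apply Lemma~\ref{lemma:almostBaire}(\ref{it:lemma:almostBaire:1}) to $f_n$ directly (the case $\gamma_n=0$ being mere continuity), and if $\gamma_n\ge\omega_0$ apply Lemma~\ref{lemma:almostBaire}(\ref{it:lemma:almostBaire:2}) to $f_n\in{\rm H}_{\gamma_n}^0(X,Y)\subseteq{\rm H}_{\gamma_n+1}^0(X,Y)$; in either case $f_n\in{\rm B}_{\gamma_n}(X,Y)\subseteq\bigcup_{\gamma<\alpha}{\rm B}_\gamma(X,Y)$. Since $f$ is the pointwise limit of the $f_n$, it follows that $f\in\overline{\bigcup_{\gamma<\alpha}{\rm B}_\gamma(X,Y)}^{\,\,{\rm p}}={\rm B}_\alpha(X,Y)$, as required. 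I do not expect a genuine obstacle anywhere: the substance is entirely in Theorem~\ref{th:cor:Kuratowski} and Lemma~\ref{lemma:almostBaire}, and the remaining work is just the clean organisation of the ordinal cases so that the correct clause is invoked each time.
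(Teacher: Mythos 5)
Your proposal is correct and follows exactly the route the paper intends: the paper offers no written proof beyond the single line that Theorem~\ref{th:cor:Kuratowski} and Lemma~\ref{lemma:almostBaire} imply the result, and your three-way case analysis on $\alpha$ (finite, infinite isolated, infinite limit), together with the monotonicity remark ${\rm H}_\gamma^0(X,Y)\subseteq{\rm H}_{\gamma+1}^0(X,Y)$ needed in the limit case, is precisely the bookkeeping that makes that one-line deduction explicit.
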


  \begin{thm}\label{tarara}
Let $X$ be a $T_1$-space, $Y$ be a perfectly normal space, $f:X\to Y$ be a map and $\alpha\in[1,\omega_1)$. If
\begin{itemize}
  \item[(a)] $X$ is a connected and locally path-connected metrizable space and $\alpha=1$, or

  \item[(b)] $X$ is a first countable densely connected space and  $\alpha>1$,
\end{itemize}
then the following conditions are equivalent:
   \begin{enumerate}
      \item\label{it:tarara:1} $f$ is continuous;

      \item\label{it:tarara:2} $f$ is the left $B_\alpha$-compositor.
      \end{enumerate}
  \end{thm}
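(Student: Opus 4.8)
The plan is to prove both implications of Theorem~\ref{tarara}. The direction \eqref{it:tarara:1}$\Rightarrow$\eqref{it:tarara:2} is the easy one: if $f$ is continuous and $g\in{\rm B}_\alpha(Z,X)$ is of the $\alpha$'th Baire class for any topological space $Z$, then $f\circ g\in{\rm B}_\alpha(Z,Y)$ follows by a routine transfinite induction on $\alpha$, using the fact that $f\circ g_0$ is continuous whenever $g_0$ is continuous (base case) and that $f\circ(\cdot)$ preserves pointwise limits (since $f$ is continuous, $f(\lim h_n(z))=\lim f(h_n(z))$ pointwise), which handles the successor/limit step. This works for every $\alpha\ge 1$ without any hypothesis on $X$ or $Y$, so I would state it first.

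For the hard direction \eqref{it:tarara:2}$\Rightarrow$\eqref{it:tarara:1}, suppose $f$ is the left ${\rm B}_\alpha$-compositor but is \emph{not} continuous at some point $x_0\in X$. Then there is an open set $W\subseteq Y$ with $f(x_0)\in W$ such that $f^{-1}(W)$ is not a neighborhood of $x_0$. The strategy is to build a ``test map'' $g\in{\rm B}_\alpha(Z,X)$ for a suitable $Z$ (here $Z$ can be taken to be $X$ itself, or $[0,1]$, or $\mathbb R$ depending on the case) for which $f\circ g\notin{\rm B}_\alpha(Z,Y)$, contradicting the compositor hypothesis. In case (a), with $\alpha=1$ and $X$ connected, locally path-connected and metrizable: since $f^{-1}(W)$ is not a neighborhood of $x_0$, there is a sequence $x_n\to x_0$ with $x_n\notin f^{-1}(W)$, i.e. $f(x_n)\notin W$. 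Using local path-connectedness I can join consecutive $x_n$'s by short paths converging to $x_0$, producing a continuous map $g:[0,1]\to X$ with $g(0)=x_0$, $g(1/n)=x_n$; such a $g$ is certainly of Baire class $1$ (indeed continuous). Then $f\circ g:[0,1]\to Y$ would have to be Baire-one, but $(f\circ g)(1/n)=f(x_n)\notin W$ while $(f\circ g)(0)=f(x_0)\in W$ need not force a contradiction by itself — so instead I would engineer a Baire-one (non-continuous) map $g$ whose image oscillates through $x_0$ and the $x_n$ infinitely often in a way that makes $f\circ g$ have no point of continuity on some closed set, or fail to be a pointwise limit of continuous maps. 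This is where I expect to invoke the characterization via $\Sigma_1$-maps / Theorem~\ref{th:LHB_general}: $f\circ g\in{\rm B}_1([0,1],Y)$ would be barely continuous, and I should violate that.

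More precisely, in case (a) I would take $Z$ to be a convergent sequence $S=\{0\}\cup\{1/n:n\in\mathbb N\}$ together with a ``doubled'' structure, or rather argue directly: define $g:\mathbb R\to X$ to send a carefully chosen $F_\sigma$-partitioned domain onto points alternating between the bad point $x_0$ and points $x_n\notin f^{-1}(W)$ clustering at $x_0$; using connectedness and local path-connectedness of $X$ I can realize such a $g$ as a genuine Baire-one map (piecing together continuous maps on an ambiguous partition, as in Theorem~\ref{th:LHB_general}). Then $(f\circ g)$ takes the value $f(x_0)\in W$ on a dense set of the domain and values outside $W$ on another dense set, so $f\circ g$ is discontinuous at every point of some closed interval, and moreover its restriction to that interval has \emph{no} continuity point — contradicting that $f\circ g$, being Baire-one on a Polish space, must be barely continuous (pointwise discontinuous). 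In case (b), with $\alpha>1$ and $X$ first countable and densely connected, I would mimic this but use Theorem~\ref{th:high_classes_sep} / Lemma~\ref{lemma:almostBaire}: the dense-connectedness of $X$ is exactly what lets me connect finite tuples of ``good'' and ``bad'' sample points by continuous paths $[1,n]\to X$, and first countability provides the sequence $x_n\to x_0$; I assemble these into a map $g\in{\rm H}_\alpha^0$-type construction (finitely many values being $x_0$ and the $x_n$, with preimages of appropriate ambiguous classes) so that $g\in{\rm B}_\alpha(Z,X)$ by Lemma~\ref{lemma:almostBaire}, while $f\circ g$ is a finite-valued map whose preimage of $W$ fails to be of additive class $\alpha$, hence $f\circ g\notin{\rm H}_\alpha\supseteq{\rm B}_\alpha$.

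The main obstacle I anticipate is the construction of the test map $g$ so that it provably lies in ${\rm B}_\alpha(Z,X)$ while $f\circ g$ provably escapes ${\rm B}_\alpha(Z,Y)$ — the two must be balanced delicately: $g$ should be ``just barely'' of class $\alpha$ and the discontinuity of $f$ should be ``amplified'' by precomposition. The geometric hypotheses on $X$ (connected + locally path-connected + metrizable in case (a); first countable + densely connected in case (b)) are precisely what make the construction of $g$ possible, and Theorem~\ref{th:LHB_general}, Lemma~\ref{lemma:almostBaire}, and Theorem~\ref{th:high_classes_sep} are the tools that certify the Baire class of the pieces. I would organize the write-up as: (i) the trivial direction by induction on $\alpha$; (ii) setup of the non-continuity assumption and extraction of a sequence $x_n\to x_0$ avoiding $f^{-1}(W)$; (iii) construction of $g$ in case (a) via paths; (iv) construction of $g$ in case (b) via dense-connectedness and Lemma~\ref{lemma:almostBaire}; (v) in each case, derive that $f\circ g$ would have to be barely continuous (resp. of Lebesgue class $\alpha$) yet demonstrably is not, giving the contradiction.
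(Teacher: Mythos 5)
Your overall strategy for \eqref{it:tarara:2}$\Rightarrow$\eqref{it:tarara:1} --- assume discontinuity at $x_0$, extract a sequence $x_n\to x_0$ with $f(x_n)\notin W$, and build a test map $g\in{\rm B}_\alpha(\mathbb R,X)$ whose range is $\{x_0\}\cup\{x_n:n\in\mathbb N\}$ so that $(f\circ g)^{-1}(W)$ has too high a descriptive complexity --- is exactly the paper's, and your case (a) is sound once written out: a Baire-one $g$ equal to $x_0$ on a co-countable set and to $x_n$ on the $n$-th point of a countable dense set works, and Fosgerau's theorem (which is why $X$ is assumed connected and locally path-connected) certifies $g\in{\rm B}_1(\mathbb R,X)$. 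One caution there: your contradiction via ``every Baire-one map on a Polish space is barely continuous'' is only backed in this paper for metrizable range (Theorem~\ref{th:general_sigma_H_barely}), while $Y$ is merely perfectly normal; the safe route is ${\rm B}_1\subseteq{\rm H}_1$ together with the observation that $(f\circ g)^{-1}(V)$ equals the co-countable set, which is not $F_\sigma$ --- this is what the paper does.

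The genuine gap is in case (b). A \emph{finite-range} test map $g$ of Lebesgue class $\beta$ into a $T_1$-space cannot produce a contradiction: if $g(\mathbb R)=\{x_0,\dots,x_N\}$, then each level set $g^{-1}(x_i)=g^{-1}\bigl(X\setminus\{x_j:j\ne i\}\bigr)$ is of additive class $\beta$ and its complement is a finite union of such sets, so every level set is ambiguous of class $\beta$; hence $(f\circ g)^{-1}(W)$ is a finite union of ambiguous sets and automatically lies in $\mathcal A_\beta$, so $f\circ g\in{\rm H}_\beta$ and nothing is violated. Thus Lemma~\ref{lemma:almostBaire} (the ${\rm H}_\alpha^0$ statement) cannot be applied to the decisive test map. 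What is needed --- and what the paper does --- is to keep the full infinite range: choose $A\in\mathcal A_\beta\setminus\mathcal M_\beta$ (with $\beta=\alpha$ for finite $\alpha$ and $\beta=\alpha+1$ otherwise), write $A=\bigcup_{n}B_n$ with $(B_n)$ disjoint and ambiguous of class $\beta$, and set $g^{-1}(x_n)=B_n$, $g^{-1}(x_0)=\mathbb R\setminus A$. Because $x_n\to x_0$, this $g$ is still of Lebesgue class $\beta$ (a preimage of an open set containing $x_0$ misses only finitely many $B_n$), its range is countable metrizable separable, and it is Theorem~\ref{th:high_classes_sep} --- not Lemma~\ref{lemma:almostBaire} --- that certifies $g\in{\rm B}_\alpha(\mathbb R,X)$ from dense connectedness and first countability; then $(f\circ g)^{-1}(V)=\mathbb R\setminus A\notin\mathcal A_\beta$ gives the contradiction. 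This single construction also covers case (a) with $A=\mathbb Q$, which is why the paper's proof is uniform across (a) and (b).
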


  \begin{proof} Since the implication (\ref{it:tarara:1})$\Rightarrow$(\ref{it:tarara:2}) is obvious, we prove (\ref{it:tarara:2})$\Rightarrow$(\ref{it:tarara:1}). Assume that $f$ is discontinuous at $x_0\in X$  and choose a sequence $(x_n)_{n=1}^\infty$ of points from  $X$ and a neighborhood $V$ of $y_0=f(x_0)$ in $Y$ such that $\lim\limits_{n\to\infty} x_n=x_0$ and $y_n=f(x_n)\in Y\setminus V$ for every $n\in\mathbb N$.

We take a set $A\subseteq \mathbb R$ such that $A\in{\mathcal A}_\beta\setminus {\mathcal M}_\beta$, where $\beta=\alpha$ if  $\alpha<\omega_0$,  and $\beta=\alpha+1$ if $\alpha\ge\omega_0$. Let $(A_n)_{n=1}^\infty$ be a sequence of sets such that $A=\bigcup\limits_{n=1}^\infty A_n$ and $A_n\in\mathcal M_{<\beta}$. We set
  \begin{gather*}
  B_1=A_1\quad\mbox{and}\quad B_n=A_n\setminus (A_1\cup\dots\cup A_{n-1})\,\,\,\mbox{for}\,\,\, n\ge 2.
  \end{gather*}
Then $(B_n)_{n=1}^\infty$  is a disjoint sequence of ambiguous sets of the class  $\beta$. Moreover,  $A=\bigcup\limits_{n=1}^\infty B_n$.

We define a map $g:\mathbb R\to X$ as follows:
\begin{gather}\label{gath:riemann}
    g(t)=\left\{\begin{array}{ll}
                 x_n,  & t\in B_n\,\,\,\mbox{for some}\,\,\, n\in\mathbb N, \\
                  x_0, & t\in\mathbb R\setminus A
                \end{array}
    \right.
\end{gather}
and show that $g\in {\rm B}_\alpha(\mathbb R,X)$. Since  the sequence $(x_n)_{n=1}^\infty$ converges to $x_0$,  $g\in {\rm H}_\beta(\mathbb R,X)$. In the case  (a) it follows from  \cite[Theorem 1]{Fos}  that $g\in {\rm B}_1(\mathbb R,X)$. In the case $(b)$ we observe that the space  $X_0=\{x_n:n=0,1,\dots\}$ is metrizable and separable. Then  we have that $g\in {\rm B}_\alpha(\mathbb R,X)$ by Theorem~\ref{th:high_classes_sep}.

According to the assumption, the composition  $h=f\circ g:\mathbb R\to Y$ belongs to the class ${\rm B}_\alpha(\mathbb R,Y)$. On the other hand,
$$
h(t)=\left\{\begin{array}{ll}
                 y_n, & t\in B_n\,\,\,\mbox{for some}\,\,\, n\in\mathbb N, \\
                 y_0, &  t\in\mathbb R\setminus A.
               \end{array}
\right.
$$
Then $h^{-1}(V)=\mathbb R\setminus A$. Hence,  $h\in {\rm H}_\beta(\mathbb R,Y)$ by Theorem 1 from  \cite[p. 386]{Kuratowski:Top:1}. This implies a contradiction, since $\mathbb R\setminus A\not\in\mathcal A_\beta$.
\end{proof}

{\small
}
\end{document}